\newcommand{\ifsodaelse}[2]{\ifthenelse{\isundefined{\SODAF}}{#2}{#1}}
\newcommand\remove[1]{}
\newcommand{\rnote}[1]{}
\newcommand{\jnote}[1]{}
\def\cprime{$'$}
\newcommand{\1}{\mathbf{1}}
\newcommand{\e}{\varepsilon}
\newcommand{\R}{\mathbb{R}}
\newcommand{\E}{\mathbb{E}}
\newcommand{\FF}{\mathbb{F}}
\newcommand{\N}{\mathbb{N}}
\renewcommand{\P}{\mathscr{P}}
\newcommand{\pad}{\mathrm{pad}}
\newcommand{\C}{\mathbb{C}}
\newcommand\F{{{\mathscr F}}}
\DeclareMathOperator{\diam}{diam}
\newtheorem{theorem}{Theorem}[section]
\newtheorem{lemma}[theorem]{Lemma}
\newtheorem{claim}[theorem]{Claim}
\newtheorem{corollary}[theorem]{Corollary}
\newtheorem{remark}{Remark}[section]
\newtheorem{conjecture}{Conjecture}
\date{}
\newcommand\Z{{{\mathbb Z}}}
\newcommand\B{{{\mathscr B}}}
\newcommand\id{{{\operatorname{id}}}}
\newcommand\eps{\varepsilon}
  \newtheorem{proposition}[subsection]{Proposition}
\title[Localization of maximal inequalities]{Random Martingales and localization of\\ maximal inequalities}
\author{Assaf Naor}
\author{Terence Tao}
\date{}
\begin{document}
\maketitle

\begin{abstract}  Let $(X,d,\mu)$ be a metric measure space. For $\emptyset\neq R\subseteq (0,\infty)$ consider the
 Hardy-Littlewood maximal operator
$$ M_R f(x) \stackrel{\mathrm{def}}{=} \sup_{r \in R} \frac{1}{\mu(B(x,r))} \int_{B(x,r)} |f|\ d\mu.$$
We show that if there is an $n>1$ such that one has the ``microdoubling condition'' $
\mu\left(B\left(x,\left(1+\frac{1}{n}\right)r\right)\right)\lesssim \mu\left(B(x,r)\right)
$
for all $x\in X$ and $r>0$, then the weak $(1,1)$ norm of $M_R$ has the following localization property:
$$
\left\|M_R\right\|_{L_1(X) \to L_{1,\infty}(X)}\asymp \sup_{r>0} \left\|M_{R\cap [r,nr]}\right\|_{L_1(X) \to L_{1,\infty}(X)}.
$$
An immediate consequence is that if $(X,d,\mu)$ is Ahlfors-David $n$-regular then the weak $(1,1)$ norm of $M_R$ is
$\lesssim n\log n$, generalizing a result of Stein and Str\"omberg~\cite{stromberg}. We show that this bound is sharp, by
constructing  a metric measure space $(X,d,\mu)$ that is
Ahlfors-David $n$-regular, for which the weak $(1,1)$ norm of
$M_{(0,\infty)}$ is $\gtrsim n\log n$. The localization property of
$M_R$ is proved by assigning to each $f\in L_1(X)$ a distribution
over {\em random} martingales for which the associated (random) Doob
maximal inequality controls the weak $(1,1)$   inequality for $M_R$.

\end{abstract}

\setcounter{tocdepth}{3}
\tableofcontents

\section{Introduction}

A \emph{metric measure space} $(X,d,\mu)$ is a separable metric space $(X,d)$,  equipped with a Radon measure $\mu$. We assume throughout the non-degeneracy property $0 < \mu(B(x,r)) < \infty$ for all $r>0$, where
$ B(x,r) \stackrel{\mathrm{def}}{=} \{ y \in X: d(x,y) \le r \}.$
For any locally integrable $f: X \to \C$, we can then define \emph{the Hardy-Littlewood maximal function}
$$ Mf(x) \stackrel{\mathrm{def}}{=} \sup_{r > 0} \frac{1}{\mu(B(x,r))} \int_{B(x,r)} |f|\ d\mu,$$
which is easily verified to be measurable.

We shall study the \emph{weak $(1,1)$ operator norm}
of $M$, defined as usual to be the least quantity $0 \leq \|M\|_{L_1(X) \to L_{1,\infty}(X)} \leq \infty$ for which one has the distributional inequality
\begin{equation}\label{eq:def weak 11} \| Mf\|_{L_{1,\infty}(X)} \leq \|M\|_{L_1(X) \to L_{1,\infty}(X)} \cdot \|f\|_{L_1(X)}
\end{equation}
for all $f \in L_1(X)$.  Here $L_p(X)$ ($p\ge 1$) denotes the usual Lebesgue space corresponding to the measure $\mu$,
and $L_{p,\infty}(X)$ is the weak $L_p$ norm,
$$ \|f\|_{L_{p,\infty}(X)} \stackrel{\mathrm{def}}{=} \sup_{\lambda > 0} \lambda\cdot  \mu(  |f| > \lambda )^{1/p}.$$
Analogously to~\eqref{eq:def weak 11}, the {\em strong} $(p,p)$ operator norm of $M$ is defined as usual to be the least quantity $0 \leq \|M\|_{L_p(X) \to L_{p}(X)} \leq \infty$ for which
\begin{equation}\label{eq:def strong pp} \| Mf\|_{L_{p}(X)} \leq \|M\|_{L_p(X) \to L_{p}(X)}\cdot \|f\|_{L_p(X)}
\end{equation}
for all $f \in L_p(X)$.

In most cases of interest it is probably impossible to compute $\|M\|_{L_1(X) \to L_{1,\infty}(X)}$ exactly; notable exceptions to this statement are ultrametric spaces, where the weak $(1,1)$ norm of $M$ equals $1$ (we will return to the class of ultrametric spaces presently), and the real line $\R$, equipped with the usual metric and Lebesgue measure, where it was shown by Melas~\cite{Mel03} that the weak $(1,1)$ norm of $M$ equals $\frac{11+\sqrt{61}}{12}$ (the case of the strong $(p,p)$ norm of $M$, $p>1$, when $X=\R$, remains open, but we refer to~\cite{DGS96,GMM99} for some partial results).

In view of these difficulties, it seems more reasonable to ask for estimates on the asymptotic behavior of the various operator norms of maximal functions. Quite remarkably, despite the wide applicability of maximal inequalities, and significant effort by many researchers, even in the simple case when $X$ is the $n$-dimensional Hilbert space $\ell_2^n$ and $\mu$ is Lebesgue measure, it is unknown whether or not the weak $(1,1)$ norm of $M$ is bounded independently of the dimension $n$.

A classical application of the Vitali covering theorem (see for example~\cite{CW71,stein:large,Durrett96,Hei01})
shows that for any $n$-dimensional normed space $X$, the weak $(1,1)$ and strong $(p,p)$ norms of $M$ grow at
most exponentially in $n$. This was greatly improved by Stein and Str\"omberg~\cite{stromberg} to $\|M\|_{L_1(X) \to L_{1,\infty}(X)}=O(n\log n)$ for a general $n$-dimensional normed space, and to the slightly better bound $\|M\|_{L_1(\ell_2^n) \to L_{1,\infty}(\ell_2^n)}=O(n)$ for $n$-dimensional Hilbert space. Until recently, there was no known example of a sequence of $n$-dimensional normed spaces $X_n$ for which $\|M\|_{L_1(X_n) \to L_{1,\infty}(X_n)}$ tends to $\infty$ with $n$. A recent breakthrough of Aldaz~\cite{Ald08} showed that when $X_n=\ell_\infty^n$, i.e., $\R^n$ equipped with the $\ell_\infty$ norm (whose unit ball is an axis parallel cube), $\|M\|_{L_1(X_n) \to L_{1,\infty}(X_n)}$ must tend to $\infty$ with $n$; the best known lower bound~\cite{Aub09} on $\|M\|_{L_1(\ell_\infty^n) \to L_{1,\infty}(\ell_\infty^n)}$ is $(\log n)^{1-o(1)}$. The best known upper estimate for $\|M\|_{L_1(X) \to L_{1,\infty}(X)}$ when $X=\ell_\infty^n$ remains the Stein-Str\"omberg $O(n\log n)$ bound.

As partial evidence that when $X$ is the $n$-dimensional Euclidean space $\ell_2^n$, the weak $(1,1)$ norm
$\|M\|_{L_1(X) \to L_{1,\infty}(X)}$ might be bounded, we can take Stein's theorem~\cite{Stein82} (see also
the appendix of~\cite{stromberg}) which asserts that in the Euclidean case, for $p>1$ we have $\|M\|_{L_p(X) \to L_{p}(X)}\le C(p)$, where $C(p)<\infty$ depends on $p$ but not on $n$. For general $n$-dimensional normed spaces, Stein and Str\"omberg~\cite{stromberg} obtained the bound $\|M\|_{L_p(X) \to L_{p}(X)}\le c(p)n$, while Bourgain~\cite{Bou86,Bou86-2} and Carbery~\cite{Carb86} proved that for any $n$-dimensional normed space, $\|M\|_{L_p(X) \to L_{p}(X)}\le C(p)<\infty$ provided $p>\frac32$. It is unknown whether or not there is some $1<p<\frac32$ for which there exist $n$-dimensional normed spaces $X_n$ such that $\|M\|_{L_p(X_n) \to L_{p}(X_n)}$ is unbounded. This is unknown even for the case of cube averages $X_n=\ell_\infty^n$. It was shown by Bourgain~\cite{Bou87} that $\|M\|_{L_p(X) \to L_{p}(X)}\le C(p,q)$ for all $p>1$ when $X=\ell_q^n$ and $q$ is an even integer, and this was extended by M\"uller to $X=\ell_q^n$ for all $1\le q<\infty$.

A dimension independent bound on $\|M\|_{L_1(\ell_2^n) \to L_{1,\infty}(\ell_2^n)}$ would mean that the classical
Euclidean Hardy-Littlewood maximal inequality is in essence an infinite dimensional phenomenon. This statement is not quite true, since there is no ``Lebesgue measure" on infinite dimensional Hilbert space, but nevertheless, even Stein's dimension independent bound on $\|M\|_{L_p(\ell_2^n) \to L_{p}(\ell_2^n)}$, $p>1$, has interesting infinite dimensional consequences---see for examples Ti\v{s}er's work~\cite{Tis88} on differentiation of integrals with respect to certain Gaussian measures on Hilbert space (provided that the integrand is in $L_p$ for some $p>1$). Moreover, improved bounds on $\|M\|_{L_1(X) \to L_{1,\infty}(X)}$ are clearly of interest since they would yield improved quantitative estimates in the many known applications of the Hardy-Littlewood maximal inequality. As an example, such bounds are relevant for quantitative variants of Rademacher's differentiation theorem for Lipschitz functions, which are used in results on the bi-Lipschitz distortion of discrete nets (see~\cite{Bou87-2,CKN09}).

Bounds on $\|M\|_{L_1(X) \to L_{1,\infty}(X)}$ and $\|M\|_{L_p(X) \to L_{p}(X)}$ have been also
intensively investigated for metric measure spaces other than finite dimensional normed spaces.
Strong $(p,p)$ bounds for free groups (with counting measure) have been established by Nevo and
Stein in~\cite{nevo}. In Section~\ref{nevo-sec} we prove the corresponding weak $(1,1)$ inequality,
 which is nevertheless not sufficient for the purpose of ergodic theoretical applications as in~\cite{nevo};
 see Conjecture~\ref{freeconj} below for more information\footnote{After presenting our work we learned from
 Michael Cowling that the weak $(1,1)$ inequality for the free group can be also deduced from the work of
 Rochberg and Taibleson~\cite{RT91}. Our combinatorial proof in Section~\ref{nevo-sec} is different from the proof
 in~\cite{RT91}, though it is similar to the proof in an unpublished manuscript of Cowling, Meda and Setti,
 which adapts arguments of Str\"omberg~\cite{Str81} in the case of the hyperbolic space.
 We thank Michael Cowling and Lewis Bowen for showing us the Cowling-Meda-Setti manuscript.}.
 In the case of the Heisenberg group $\mathbb H^{2n+1}$, equipped with either the
 Carnot-Carath\'eodory metric or the Koranyi norm (and the underlying measure being the Haar measure),
 dimension independent strong $(p,p)$ bounds have been obtained by Zienkiewicz~\cite{jacek}, and a
 weak $(1,1)$ bound of $O(n)$ was obtained by Li~\cite{Quan09}. It is unclear if these bounds
 generalize to other nilpotent Lie groups (though perhaps similar methods could apply to
 certain two step nilpotent Lie groups, by replacing the use of~\cite{NT97} in~\cite{jacek}
 with the results of~\cite{MS04,Fis06}).

The main result of the present paper implies a general bound for the weak $(1,1)$ norm of the Hardy-Littlewood maximal function on Ahlfors-David $n$-regular spaces; a class of metric measure spaces that contains the examples described above as special cases (except for the case of the free group, which is dealt with separately in Section~\ref{nevo-sec}). Specifically, assume that the metric measure space $(X,d,\mu)$ satisfies the growth bounds
\begin{equation}\label{eq:def AD}
\forall x\in X\ \forall r>0,\quad r^n\le \mu\left(B(x,r)\right)\le Cr^n,
\end{equation}
where $n\ge 2$, and $C$ is independent of $x,r$. Under this assumption, we show that

\begin{equation}\label{eq:introAD}
\|M\|_{L_1(X) \to L_{1,\infty}(X)}=O(n\log n),
 \end{equation}
 where the implied constant depends only on $C$.
 At the same time, we construct for all $n\ge 2$ an Abelian group $G_n$, equipped with a translation invariant metric $d_n$ and a translation invariant measure $\mu_n$, that satisfies~\eqref{eq:def AD} with $C=81$ \footnote{One can modify the argument to make $C$ arbitrarily close to $1$, but we will not do so here as it requires more artificial constructions.}, yet
 \begin{equation}\label{eq:lowerGn}
 \|M\|_{L_1(G_n) \to L_{1,\infty}(G_n)}\gtrsim n\log n.
 \end{equation}
  We can also ensure that for all $p>1$ we have
\begin{equation}\label{eq:Gn}
\|M\|_{L_p(G_n) \to L_{p}(G_n)}\lesssim_p 1.
 \end{equation}
 Here, and in what follows, we use $X \lesssim Y$, $Y \gtrsim X$ to denote the estimate $X \leq CY$ for some absolute constant $C$; if we need $C$ to depend on parameters, we indicate this by subscripts, thus $X \lesssim_p Y$ means that $X \leq C_p Y$ for some $C_p$ depending only on $p$. We shall also use the notation $X\asymp Y$ for $X\lesssim Y\ \wedge\ Y\lesssim X$.

Note that the bound~\eqref{eq:introAD} contains the Stein-Str\"omberg result for $n$-dimensional normed spaces. It also applies to, say, any translation invariant length metric on nilpotent Lie groups\footnote{It seems likely however that the original Stein-Str\"omberg argument can be extended to this setting.}. However, it falls shy (by a logarithmic factor) of the two $O(n)$ results quoted above: for the Euclidean space $\ell_2^n$, and the Heisenberg group $\mathbb H^{2n+1}$. Our lower bound~\eqref{eq:lowerGn} suggests that in order to improve upon the $O(n \log n)$ bound of Stein and Str\"omberg, one must genuinely use the underlying geometry of the normed vector space and not just the metric properties, or the $L_p$ theory.  For instance, to obtain the bound of $O(n)$ in the case of the Euclidean metric in~\cite{stromberg}, it was necessary to exploit the relationship between averaging on balls and the Poisson semigroup, in order that the Hopf-Dunford-Schwartz maximal inequality can be used. A similar strategy was used for the Heisenberg group in~\cite{Quan09}. This type of relationship does not appear to be available for general norms on $\R^n$.

The results presented above are simple corollaries of a general
localization phenomenon for maximal inequalities, which we shall now
describe. In fact, for the bound~\eqref{eq:introAD} to hold true, we
need to assume a condition which is less restrictive than the
Ahlfors-David regularity condition~\eqref{eq:def AD}; in particular
it need not hold for all radii $r$, and it thus also applies  to
discrete groups of polynomial growth, equipped with the word metric
and the counting measure. All of these issues are explained in the
following subsection.


\subsection{Microdoubling  and the localization theorem}\label{sec:micro}

Let $(X,d,\mu)$ be a metric measure space. For $R\subseteq (0,\infty)$ we consider the maximal operator corresponding to radii in $R$, which is defined by
 \begin{equation}\label{eq:def MR}
  M_R f(x) \stackrel{\mathrm{def}}{=} \sup_{r \in R} \frac{1}{\mu(B(x,r))} \int_{B(x,r)} |f|\ d\mu.
  \end{equation}
Thus, using our previous notation, $M=M_{(0,\infty)}$.

We shall say that $(X,d,\mu)$ is {\em $n$-microdoubling with constant $K$} if for all $x\in X$ and all $r>0$ we have
\begin{equation}\label{eq:def micro}
\mu\left(B\left(x,\left(1+\frac{1}{n}\right)r\right)\right)\le KB(x,r).
\end{equation}
The case $n=1$ in~\eqref{eq:def micro} is the classical {\em $K$-doubling} condition
\begin{equation}\label{eq:def doubling}
\forall x\in X\ \forall r>0,\quad\mu\left(B\left(x,2r\right)\right)\le KB(x,r).
\end{equation}
Note that~\eqref{eq:def micro} follows from the Ahlfors-David $n$-regularity condition~\eqref{eq:def AD}, with $K=eC$.
The microdoubling property appeared in various guises in the literature; for example, it follows from a lemma of Colding and Minicozzi~\cite{CM98} (see also Proposition 6.12 in~\cite{Chee99}) that if $(X,d,\mu)$ is a $K$-doubling {\em length} space, then it is also $n$-microdoubling with constant $O(1)$, where $n=e^{K^{O(1)}}$. We note in passing that this exponential dependence on $K$ is necessary, as exhibited by the interval $X=[1,N]$, with the metric inherited from $\R$, and the measure whose density is $\varphi(x)=\frac{1}{x}$; the doubling constant for this length space is of order $\log N$, but it can only be $n$-microdoubling with $n$ a power of $N$.

Our main result is the following localization theorem for maximal inequalities on microdoubling spaces. It deals, for any $1 \leq p < \infty$, with the weak $(p,p)$ norm of $M_R$,
defined as the optimal number $\|M_R\|_{L_p(X)\to L_{p,\infty}(X)}$
for which the distributional inequality
$$ \mu\left( M_R f > \lambda  \right) \leq \frac{\|M_R\|_{L_p(X)\to L_{p,\infty}(X)}^p}{\lambda^p} \| f\|_{L_p(X)}^p$$
holds for all $f \in L_p(X)$ and $\lambda>0$.

\begin{theorem}[Localisation]\label{local}  Fix $n\ge 1$ and $K\ge 5$. Let $(X,d,\mu)$ be a metric measure space satisfying the microdoubling condition~\eqref{eq:def micro}. Fix $\emptyset\neq R \subseteq (0,\infty)$ and $p \geq 1$.  Then we have
 \begin{equation}\label{eq:our localization}\| M_R \|_{L_p(X)\to L_{p,\infty}(X)} \lesssim K + \left(1+\frac{\log\log K}{1+\log n}\right)^{1/p}\sup_{r > 0} \left\| M_{R \cap [r, n r]} \right\|_{L_p(X)\to L_{p,\infty}(X)}.
 \end{equation}
\end{theorem}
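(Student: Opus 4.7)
The plan is to prove Theorem~\ref{local} by assigning to each $f\in L_p(X)$ a distribution over random martingales whose Doob maximal inequality controls $M_Rf$ up to a single-scale residual. The three ingredients are (a) a family of random nested partitions of $X$ at geometrically increasing scales, (b) Doob's weak-type inequality applied to the induced martingale, and (c) a pointwise comparison between ball averages and cell averages via microdoubling.

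First I would fix an exponent $\alpha\in(1,n]$ and a ``lookback'' parameter $L\ge 1$, both to be chosen, and construct for each $j\in\Z$ a random partition $\P_j$ of $X$ satisfying: (i) every cell of $\P_j$ has diameter $\lesssim \alpha^j$; (ii) $\P_j$ refines $\P_{j+1}$; and (iii) if $C_j(x)\in\P_j$ is the cell containing $x$, then $\Prob\bigl[B(x,\rho)\subseteq C_j(x)\bigr]\ge 1-O(\rho/\alpha^j)$ for all $x\in X$ and $\rho>0$. Such a family can be built by a randomized Voronoi-type construction applied to a maximal $\alpha^j$-separated net, with both the ordering of the net and a cutoff radius drawn log-uniformly on $[\alpha^{j-1},\alpha^j]$ chosen at random. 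The microdoubling hypothesis~\eqref{eq:def micro} is used precisely to control the measure of thin annuli in which ``cuts'' can occur, and so to yield property (iii).

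Next I would form the random martingale $f_j(x):=\E[|f|\mid \P_j](x)$ and compare $M_Rf$ to it scale by scale. For $r\in R$ with $\alpha^{j-1}\le r<\alpha^j$, on the event $B(x,r)\subseteq C_{j+L}(x)$ (which has probability at least $1-O(\alpha^{-L})$ by (iii)), iterating microdoubling across the scale window $[\alpha^{j-1},\alpha^{j+L}]$ yields
\[ \frac{1}{\mu(B(x,r))}\int_{B(x,r)}|f|\,d\mu \;\lesssim\; K \cdot f_{j+L}(x), \]
provided one takes $\alpha^L\le n$ so that radii inflate by a factor at most $n$. On the complementary low-probability event, the ball average is crudely dominated by $M_{R\cap[r,nr]}f(x)$. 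Applying Doob's weak-type inequality to $\sup_j f_j$ on $X\times\Omega$ and then averaging over the random parameter $\omega\in\Omega$ turns the first estimate into a weak-$L_p$ contribution of $\lesssim K\|f\|_{L_p(X)}$; combining the $O(\alpha^{-L})$ cutting bound with the weak-type hypothesis on each single-scale piece and summing over the $O(L)$ dyadic subintervals of $[r,nr]$ gives a complementary contribution of at most a constant times $\alpha^{-L/p}L^{1/p}\sup_{r>0}\|M_{R\cap[r,nr]}\|_{L_p(X)\to L_{p,\infty}(X)}\cdot\|f\|_{L_p(X)}$.

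Finally, choosing $\alpha=n$ and $L\asymp 1+\log\log K/\log n$ balances the two pieces so that $\alpha^{-L/p}L^{1/p}\lesssim (1+\log\log K/(1+\log n))^{1/p}$ while keeping the additive $K$ term admissible, which yields the bound~\eqref{eq:our localization}. The main obstacle is the construction in the second paragraph: producing random nested partitions whose cutting probability depends linearly on $\rho/\alpha^j$ using \emph{only} microdoubling (rather than the stronger, but geometrically useless, standard doubling at the ambient scale) requires a delicate use of randomized shifts together with the annulus-measure bound implied by~\eqref{eq:def micro}, and tracking the constants through this construction is where the $\log\log K/\log n$ dependence is genuinely born.
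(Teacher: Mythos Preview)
Your proposal has a genuine gap in the comparison step, and it stems from using the partition cells in the wrong direction relative to the ball radius.

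You place $r\in[\alpha^{j-1},\alpha^j)$ and, on the good event $B(x,r)\subseteq C_{j+L}(x)$, claim that
\[
\frac{1}{\mu(B(x,r))}\int_{B(x,r)}|f|\,d\mu\;\lesssim\;K\cdot f_{j+L}(x).
\]
But the left side is bounded by $\dfrac{\mu(C_{j+L}(x))}{\mu(B(x,r))}\,f_{j+L}(x)$, and $C_{j+L}(x)\subseteq B\bigl(x,c\,\alpha^{j+L}\bigr)$ with $\alpha^{j+L}/r\ge\alpha^{L}$. Under microdoubling~\eqref{eq:def micro}, one application takes the radius from $r$ to $(1+\tfrac1n)r$; reaching $\alpha^{L}r$ with $\alpha=n$ and $L\ge 1$ requires roughly $n\log n$ iterations, so the measure ratio is $K^{\Theta(n\log n)}$, not $K$. ``Iterating microdoubling across $[\alpha^{j-1},\alpha^{j+L}]$'' does not give a single factor of $K$; that would require the much stronger hypothesis $\mu(B(x,nr))\le K\mu(B(x,r))$. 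This error propagates: once the good-event comparison constant blows up, the additive $K$ in~\eqref{eq:our localization} is lost and the balancing of parameters collapses.

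The paper's argument runs the comparison in the \emph{opposite} direction. For $r$ in a fixed band $R_k^i$, the $\sigma$-algebra used for the $L_\infty$ bound~\eqref{mnf} comes from a partition at a \emph{finer} scale than $r$ --- fine by a factor $2^{-m}\le \beta=\tfrac{1}{16n\log K}$ --- so that the union of all cells meeting $B(x,r)$ is contained in $B\bigl(x,(1+\tfrac1n)r\bigr)$, and \emph{one} application of microdoubling yields the single $K$ (this is the computation~\eqref{eq:second condition}). The padding is used not for this $L_\infty$ comparison but for the localisation property~\eqref{emf}: one needs $B(x,r)$ to sit inside a cell of the \emph{next coarser} $\sigma$-algebra, and the bands $R_k^i$ are separated by a factor $2^{2m}$ precisely so that the padding radius $\beta\cdot 2^{-(3k+i+1)m}$ dominates the radii in $R_{k+1}^i$. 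In other words, two different partition scales serve two different purposes, and conflating them into the single cell $C_{j+L}(x)$ is what breaks your estimate.

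Two secondary issues are worth flagging. First, the linear cutting bound $\Pr\bigl[B(x,\rho)\not\subseteq C_j(x)\bigr]=O(\rho/\alpha^j)$ is stronger than what a CKR/FRT-type construction yields under microdoubling alone; the proof of Lemma~\ref{lem:decomp} only produces padding at the specific radius $\beta\cdot 2^{-k}$ with probability $\ge\tfrac12$, which corresponds (heuristically) to a cutting bound carrying an extra factor of order $n\log K$. Second, your summation over scales $j$ is not controlled: the bad-event contributions $\alpha^{-L}\,\mu(\{M_{R_j}f>\lambda\})$ are summed over infinitely many $j$, and without a disjointness device such as the sets $E_k^i$ in the paper (the first scale at which $M_{R_k^i}f>1$) this sum need not be finite. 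The paper circumvents both issues by passing through the modified Doob inequality of Theorem~\ref{doob2} applied to the truncated operators $\widetilde M_{R_k^i}=\1_{\widetilde E_k^i}M_{R_k^i}$, rather than by a direct good/bad decomposition.
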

\begin{remark} { \em In the converse direction, one trivially has
$$\left\| M_R \right\|_{L_p(X)\to L_{p,\infty}(X)} \geq \sup_{r > 0} \left\| M_{R \cap [r, n r]} \right\|_{L_p(X)\to L_{p,\infty}(X)}.$$
Note that the term $\frac{\log\log K}{1+\log n}$ in~\eqref{eq:our localization} is always at most $\log \log K$. Thus when $K$ is independent of $n$, up to constants, in order to establish a weak $(p,p)$ maximal
inequality for spaces obeying~\eqref{eq:def micro}, it suffices to do so
for scales localized to an interval $[r,nr]$.  In many cases (e.g.
finite-dimensional normed vector spaces) we can also rescale to $r = 1$.  }
\end{remark}

\subsection{Weak $(1,1)$ norm bounds}\label{sec:norm}
To deduce some corollaries of Theorem~\ref{local}, fix an integer $m\in \N$, and note that for all $f\in L_p(X)$ and $r,\lambda>0$ we have,
\begin{multline*}
\mu\left( M_{R\cap [r,nr]} f > \lambda  \right) =\mu\left(\max_{0\le j\le m-1} M_{R\cap\left[rn^{j/m},rn^{(j+1)/m}\right]}f>\lambda\right)\\\le \sum_{j=0}^{m-1} \mu\left( M_{R\cap\left[rn^{j/m},rn^{(j+1)/m}\right]}f>\lambda\right)\le m\max_{0\le j\le m-1} \mu\left( M_{R\cap\left[rn^{j/m},rn^{(j+1)/m}\right]}f>\lambda\right).
\end{multline*}
Thus, under the assumptions of Theorem~\ref{local} (and specializing to $p=1$), we have for every $m\in \N$,
 \begin{equation}\label{eq:sparsification}
 \| M_R \|_{L_1(X)\to L_{1,\infty}(X)} \lesssim K + m\left(1+\frac{\log\log K}{1+\log n}\right)\sup_{r > 0} \left\| M_{R \cap \left[r, n^{1/m} r\right]} \right\|_{L_1(X)\to L_{1,\infty}(X)}.
 \end{equation}
Note that for $m\ge 2n\log n$ we have $n^{1/m}\le 1+\frac{1}{n}$, and hence for all $r>0$,
\begin{equation}\label{eq:bring A}
M_{R \cap \left[r, n^{1/m} r\right]}f\le \frac{1}{\mu(B\left(x,r\right))}\int_{B\left(x,\left(1+\frac{1}{n}\right)r\right)}|f|d\mu
\stackrel{\eqref{eq:def micro}}{\le} K A_{1+\frac{1}{n}}f,
\end{equation}
where $A_r$ is the averaging operator:
\begin{equation}\label{avdef}
 A_r f(x) \stackrel{\mathrm{def}}{=} \frac{1}{\mu(B(x,r))} \int_{B(x,r)} |f|\ d\mu.
 \end{equation}
Under some mild uniformity assumption on $\mu$, the strong $(1,1)$ norm of $A_r$ is bounded for all $r>0$. For example, if $\mu(B(x,r))$ does not depend on $x$ (as is the case for invariant metrics and measures on groups), then a simple application of Fubini's theorem shows that $\|A_r\|_{L_1(X)\to L_1(X)}\le 1$. In fact, if we knew that $\mu(B(x,r))\le K\mu(B(y,r))$ for all $x\in X$ and $y\in B(x,r)$ (which is a trivial consequence of the Ahlfors-David regularity condition~\eqref{eq:def AD}), then we would have by the same reasoning $\|A_r\|_{L_1(X)\to L_1(X)}\le K$. An elegant way to combine this uniformity condition with the microdoubling condition~\eqref{eq:def micro}, is to impose the following condition, which we call {\em strong $n$-microdoubling with constant $K$}:
\begin{equation}\label{eq:def strong micro}
\forall x\in X\ \forall r>0\ \forall y\in B(x,r),\quad \mu\left(B\left(y,\left(1+\frac{1}{n}\right)r\right)\right)\le KB(x,r).
\end{equation}

Thus, by a combination of~\eqref{eq:sparsification} and~\eqref{eq:bring A}, we see that if $(X,d,\mu)$ satisfies~\eqref{eq:def strong micro}, then $\|M\|_{L_1(X)\to L_{1,\infty}(X)}\lesssim_K n\log n$. Similarly, if $R\cap \left[r,n^{1/m}r\right]$ contains at most one point for all $r>0$, then $\|M_R\|_{L_1(X)\to L_{1,\infty}(X)}\lesssim_K m$. This happens in particular if $$R=2^{\Z}=\left\{2^k:\ k\in \Z\right\},$$ and $m\asymp \log n$, proving the following corollary:
\begin{corollary}\label{coro:our norm bounds}
Fix $n\ge 1$ and $K\ge 5$. Let $(X,d,\mu)$ be a metric measure space satisfying the strong $n$-microdoubling condition~\eqref{eq:def strong micro}. Then
\begin{equation}\label{eq:our SS}
\|M\|_{L_1(X)\to L_{1,\infty}(X)}\lesssim_K n\log n,
\end{equation}
\begin{equation}\label{eq:our lacunary}
\left\|M_{2^\Z}\right\|_{L_1(X)\to L_{1,\infty}(X)}\lesssim_K \log n.
\end{equation}
\end{corollary}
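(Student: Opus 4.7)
The plan is to assemble the corollary from three ingredients already present in the excerpt: the sparsification bound~\eqref{eq:sparsification} (a consequence of Theorem~\ref{local} via the dyadic decomposition preceding it), the pointwise bound~\eqref{eq:bring A} that trades a short-scale maximal operator for the averaging operator $A_{1+1/n}$ via microdoubling, and a uniform $L_1\to L_1$ bound on $A_r$ under strong microdoubling.

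First I would establish that $\|A_r\|_{L_1(X)\to L_1(X)}\le K$ for every $r>0$. This follows by a single application of Fubini, using that $y\in B(x,r)\iff x\in B(y,r)$, to swap the order of integration,
$$\int_X A_r f(x)\,d\mu(x) = \int_X |f(y)|\int_{B(y,r)} \frac{d\mu(x)}{\mu(B(x,r))}\,d\mu(y),$$
together with the observation that strong microdoubling~\eqref{eq:def strong micro} (with the roles of $x$ and $y$ reversed) yields $\mu(B(x,r))\le \mu(B(x,(1+1/n)r))\le K\mu(B(y,r))$ whenever $x\in B(y,r)$, so the inner integral is at most $K$. In particular the weak $(1,1)$ norm of $A_r$ is also at most $K$.

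Next, for~\eqref{eq:our SS} I would take $R=(0,\infty)$ and $m=\lceil 2n\log n\rceil$ in~\eqref{eq:sparsification}, so that $n^{1/m}\le 1+1/n$. Combining the pointwise bound~\eqref{eq:bring A} with the preceding step gives $\sup_{r>0}\|M_{R\cap[r,n^{1/m}r]}\|_{L_1\to L_{1,\infty}}\le K\cdot K=K^2$, and substitution into~\eqref{eq:sparsification} yields the claim after absorbing the $K$-dependent factors. For~\eqref{eq:our lacunary} I would take $R=2^{\Z}$ and $m=\lceil\log_2 n\rceil+1$, so that $n^{1/m}<2$; since consecutive elements of $2^{\Z}$ differ by a factor of $2$, each truncated maximal operator $M_{R\cap[r,n^{1/m}r]}$ is either identically zero or a single averaging operator $A_{r'}$, which Step~1 bounds by $K$ on $L_1\to L_{1,\infty}$. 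Substituting into~\eqref{eq:sparsification} gives the second bound.

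There is essentially no obstacle beyond what is already encapsulated in Theorem~\ref{local}. The only subtlety is that the Fubini computation in the first step genuinely exploits the \emph{strong} microdoubling hypothesis~\eqref{eq:def strong micro} rather than the ordinary one~\eqref{eq:def micro}: ordinary microdoubling controls $\mu(B(x,r))$ by $\mu(B(x,(1+1/n)r))$ but gives no pointwise comparison between $\mu(B(x,r))$ and $\mu(B(y,r))$ for nearby $x,y$, which is exactly what the Fubini swap requires.
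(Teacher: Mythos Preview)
Your approach is exactly the paper's: the corollary is obtained from the sparsification bound~\eqref{eq:sparsification}, the pointwise comparison~\eqref{eq:bring A}, and the $L_1$-boundedness of $A_r$ via Fubini, with precisely the choices $m\asymp n\log n$ and $m\asymp \log n$ you indicate.

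One small slip in the Fubini step: the inequality you wrote is in the wrong direction for the conclusion. To bound the inner integral $\int_{B(y,r)}\frac{d\mu(x)}{\mu(B(x,r))}$ from above by $K$ you need $\mu(B(y,r))\le K\mu(B(x,r))$ for $x\in B(y,r)$, so that $\tfrac{1}{\mu(B(x,r))}\le \tfrac{K}{\mu(B(y,r))}$. This follows from~\eqref{eq:def strong micro} applied \emph{without} reversing roles: since $x\in B(y,r)$ is equivalent to $y\in B(x,r)$, the condition with center $x$ gives $\mu(B(y,(1+\tfrac{1}{n})r))\le K\mu(B(x,r))$ and hence $\mu(B(y,r))\le K\mu(B(x,r))$. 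The inequality you actually wrote, $\mu(B(x,r))\le K\mu(B(y,r))$, is also true but only yields a lower bound on the inner integral.
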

The lacunary maximal function $M_{2^\Z}$ was previously studied for
$n$-dimensional normed spaces by Bourgain in~\cite{Bou86-2}, where
he proved that its strong $(p,p)$ norm is bounded by a dimension
independent constant $C_p<\infty$ (recall that for the non-lacunary
maximal function this is only known for $p>\frac32$). The
logarithmic upper bound~\eqref{eq:our lacunary} on the weak $(1,1)$
norm of the lacunary maximal function when $X$ is an $n$-dimensional
normed space was proved by Men{\'a}rguez and Soria in~\cite{MS92}.

In section~\ref{lin-sec} we present a different approach to the proof of Corollary~\ref{coro:our norm bounds},
following an argument of E. Lindenstrauss~\cite{linden}. While it
gives slightly weaker results, and does not yield the localization theorem, this approach is of independent
interest. Moreover, Lindenstrauss' approach is based on a beautiful randomization of the Vitali covering argument, and as such complements our approach to Theorem~\ref{local}, which is based on a random partitioning method that originated in theoretical computer science and combinatorics (an overview of our technique is contained in Section~\ref{sec:methods}).
The maximal functions considered in~\cite{linden} arose
when taking averages over F{\o}lner sequences of an amenable group
action on a measure space, and were thus not directly connected to
the metric questions that are studied in the present paper.
Nevertheless we consider the arguments in Section~\ref{lin-sec} to be essentially the
same as those in~\cite{linden}. We thank Raanan Schul for pointing
out how the  maximal inequality of E. Lindenstrauss implies the
 Hardy-Littlewood maximal inequality under strong microdoubling.

\subsection{Ultrametric approximations: deterministic and random}\label{sec:methods}

Doob's classical maximal inequality for martingales (see Section~\ref{sec:doob}) is perhaps the simplest and most versatile maximal inequality for which the weak $(1,1)$ norm is known exactly (and is equal to $1$). Our proof of Theorem~\ref{local} relates the weak $(1,1)$ inequality for $M$ to the maximal inequality for martingales, by allowing the martingale itself to be a random object. We show that while the weak $(1,1)$ inequality is not itself a martingale inequality, it is possible to associate to each $f\in L_1(X)$ a distribution over {\em random martingales}. These random martingales stochastically approximate $Mf$, in the sense that we can write down a variant of Doob's inequality for each of them, which, under the microdoubling assumption, in expectation yields theorem~\ref{local}. The details are presented in Section~\ref{frag-sec}.

An alternative interpretation of Doob's maximal inequality is that if $(X,d,\mu)$ is a metric measure space, and if  in addition $d$ is an ultrametric, i.e., $d(x,y)\le \max\{d(x,z),d(z,y)\}$  for all $x,y,z\in X$, then $\|M\|_{L_1(X)\to L_{1,\infty}(X)}\le 1$. Indeed, restrict for simplicity to the case of a finite ultrametric, in which case we obtain an induced hierarchical family of partitions of $X$ into balls, where each ball at a given ``level" is the union of balls of smaller radii at the next ``level". This picture immediately shows that by considering the averages of $f$ on smaller and smaller balls, in the ultrametric case we can reduce the weak $(1,1)$ inequality for $Mf$ to Doob's maximal inequality.

Of course, not every metric is an ultrametric, or even close to an ultrametric. Nevertheless, over the previous two decades, researchers in combinatorics and computer science developed methods to associate to a general metric space $(X,d)$ a distribution over {\em random ultrametrics} $\rho$ on $X$, which dominate $d$ and sufficiently approximate it in various senses (depending on the application at hand). Such methods are often also called ``random partitioning methods", in reference to the hierarchical (tree) structure of ultrametrics. This approach originated in the pioneering works of Linial and Saks~\cite{LS93} and Alon, Karp, Peleg and West~\cite{AKPW95}, and has been substantially developed and refined by Bartal~\cite{Bar96,Bar99}. Important contributions of Calinescu, Karloff and Rabani~\cite{CKR04} and Fakcharoenphol, Rao and Talwar~\cite{FRT04} resulted in a sharp form of ``Bartal's random tree method", and our work builds on these ideas. In~\cite{MN06,MN07} such random ultrametrics were used in order to prove maximal-type inequalities of a very different nature (motivated by embedding problems, as ultrametrics are isometric to subsets of Hilbert space~\cite{Lem01}); these results also served as some inspiration for our work.

One should mention here that the idea of relating metrics to ultrametric models is, of course, standard. Hierarchical partitioning schemes are ubiquitous in analysis and geometry (see the discussion of Calder\'on-Zygmund decompositions in~\cite{Stein82}, or, say, Christ's cube construction in~\cite{Chr90}). Proving maximal inequalities by considering certain Hierarchical partitions is extremely natural; a striking example of this type is Talagrand's majorizing measure theorem~\cite{Tal87}, which deals with sharp maximal inequalities for Gaussian processes via a construction of special ultrametrics (the ultrametric approach is explicit in~\cite{Tal87}, and has an alternative later description~\cite{Tal05} via the so called ``generic chaining"; see also~\cite{GZ03}).  Explicit uses of random coverings and partitions in the context of purely analytic problems occurred in E. Lindenstrauss' aforementioned randomization of the Vitali covering argument for the purpose of pointwise theorems for amenable groups~\cite{linden}, and in the work of Nazarov, Treil and Volberg~\cite{NTV03} on $T(b)$ theorems on non-homogeneous spaces. See also~\cite{LN05} for applications to extensions of Lipschitz functions.

\subsection{Lower bounds}

A standard application of the Vitali covering argument (see e.g. \cite{stein:large} or \cite{terasawa}) yields the inequality
\begin{equation}\label{standard}
 \left\| \tilde M f \right\|_{L_{1,\infty}(X)} \leq \|f\|_{L_1(X)},
\end{equation}
where $\tilde M f$ is the modified Hardy-Littlewood maximal operator
$$ Mf(x) \stackrel{\mathrm{def}}{=} \sup_{r > 0} \frac{1}{\mu(B(x,r,r))} \int_{B(x,r)} |f|\ d\mu,$$
and $B(x,r) \subseteq B(x,r,r) \subseteq B(x,2r)$ is the enlarged ball
$$ B(x,r,r) \stackrel{\mathrm{def}}{=} \bigcup_{y \in B(x,r)} B(y,r) = \{ z \in X: d(x,y), d(y,z) \le r \hbox{ for some } y \in X \}.$$
In particular, if we have the doubling condition~\eqref{eq:def doubling}, then
\begin{equation}\label{mok}
 \|M\|_{L_1(X) \to L_{1,\infty}(X)} \leq K.
\end{equation}
The factor $2$ in~\eqref{eq:def doubling} cannot be replaced by any smaller number while still retaining linear behavior in terms of $K$ of the weak $(1,1)$ operator norm; see \cite{sawano}.

In the absence of any further assumptions on the metric measure space, the bound \eqref{mok} is close to sharp:

\begin{proposition}[The star counterexample]\label{patho} Fix $K \geq 1$.  Then there exists a metric measure space obeying \eqref{eq:def doubling} with
$$ \|M\|_{L_1(X) \to L_{1,\infty}(X)} \geq \lfloor K \rfloor - 1.$$
\end{proposition}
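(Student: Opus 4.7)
The construction should be a finite weighted ``star'' space with just the right ratio between the mass at the center and the mass at each leaf. Assume $K \geq 2$ (otherwise $\lfloor K\rfloor - 1 \leq 0$ and there is nothing to prove), and let $N' = \lfloor K \rfloor$. Set $X = \{x_0, x_1, \dots, x_N\}$ where $N = (N'-1)^2$, with metric $d(x_0, x_i) = 1$ for $i \geq 1$ and $d(x_i, x_j) = 2$ for $i, j \geq 1$ with $i \neq j$ (this is easily checked to be a metric). Equip $X$ with the discrete measure $\mu(\{x_0\}) = N' - 1$ and $\mu(\{x_i\}) = 1$ for $i \geq 1$.

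I would then verify the doubling condition $\mu(B(x,2r)) \leq N' \mu(B(x,r))$ (which implies $K$-doubling since $N' \leq K$) by a short case analysis. There are only three non-trivial transitions to inspect: for $r \in [\tfrac12, 1)$ at $x_0$ the ratio is $1 + N/(N'-1) = 1 + (N'-1) = N'$; for $r \in [\tfrac12, 1)$ at a leaf the ratio is $(N'-1)+1 = N'$; for $r \in [1,2)$ at a leaf the ratio is $((N'-1)+N)/N' = N'-1$. All three are at most $N' \leq K$, and all other radii give trivial ratios.

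Next take the test function $f = \1_{\{x_0\}}$, so $\|f\|_{L_1(X)} = N'-1$. Compute $Mf$ directly: $Mf(x_0) = 1$, while for any leaf $x_i$ the ball of radius $r \in [1,2)$ gives average $(N'-1)/N'$, and enlarging $r$ only decreases the average, so $Mf(x_i) = (N'-1)/N'$. Setting $\lambda = (N'-1)/N' - \delta$ gives $\{Mf > \lambda\} = X$ with total mass $(N'-1) + N = N'(N'-1)$. Hence
\[
\|Mf\|_{L_{1,\infty}(X)} \;\geq\; \frac{N'-1}{N'}\cdot N'(N'-1) \;=\; (N'-1)^2,
\]
and dividing by $\|f\|_{L_1(X)} = N'-1$ yields $\|M\|_{L_1(X)\to L_{1,\infty}(X)} \geq N'-1 = \lfloor K \rfloor - 1$.

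The only real ``obstacle'' is choosing the parameters $m_0 = N'-1$ and $N = (N'-1)^2$ correctly: these are precisely the values that saturate two of the three doubling inequalities simultaneously (the leaf-to-center jump and the center-to-full-space jump), so that leaves contribute as much mass as possible to the super-level set of $Mf$ while keeping $(X,d,\mu)$ doubling. Once the construction is in place, all computations are elementary.
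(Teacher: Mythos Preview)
Your proposal is correct and is essentially identical to the paper's own proof: the same star graph on $1+(N'-1)^2$ vertices with center mass $N'-1$ and leaf mass $1$, the same test function $f=\1_{\{x_0\}}$, and the same computation leading to $\|M\|_{L_1\to L_{1,\infty}}\ge N'-1=\lfloor K\rfloor-1$. The only difference is that you spell out the doubling-ratio case analysis in a bit more detail than the paper does.
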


\begin{proof} Without loss of generality we may take $K$ to be an integer.  Let $X$ be the ``star'' graph formed by connecting one ``hub'' vertex $v_0$ to $(K-1)^2$ other ``spoke'' vertices $v_1,\ldots,v_{(K-1)^2}$, with the usual graph metric (thus $d(v_0,v_i) = 1$ and $d(v_i,v_j) = 2$ for all distinct $i,j \in \{1,\ldots,(K-1)^2\}$).  Let $\mu$ be the measure which assigns the mass $K-1$ to $v_0$ and mass $1$ to all other vertices; one easily verifies that \eqref{eq:def doubling} holds.  Let $f \in L_1(X)$ be the function which equals $1$ on $v_0$ and vanishes elsewhere.  Then one easily verifies that $\|f\|_{L_1(X)} = K-1$, that $\mu(X) = K(K-1)$, and that $Mf(x) \geq \frac{K-1}{K}$ for all $x \in X$, and the claim follows.
\end{proof}

\begin{remark}\label{euclidean}  {\em One can achieve a similar effect in a high-dimensional Euclidean space $\R^n$.  If we let $X = \{0,e_1,\ldots,e_n\}$ be the origin and standard basis with the usual Euclidean metric and counting measure, then \eqref{eq:def doubling} holds with $K \stackrel{\mathrm{def}}{=} n+1$, while if we let $f$ be the indicator function of $0$, then $Mf(x) \geq \frac{1}{2}$ for all $x \in X$, and so $\|M\|_{L_1(X) \to L_{1,\infty}(X)} \geq \frac{n+1}{2} = \frac{K}{2}$.
A more sophisticated version of this example was observed in
\cite{shd}: if we take $X$ to be the origin $0$, together with a
maximal $1.01$-separated (say) subset of the sphere $S^{d-1}$, then
\eqref{eq:def doubling} holds for $K = |X| \geq C^n$ for some absolute constant
$C>1$, but $\|M\|_{L_1(X) \to L_{1,\infty}(X)} \geq \frac{K}{2}$ by
the same argument as before.  In particular this shows that the
Hardy-Littlewood weak $(1,1)$ operator norm (as well as the $L_p$
operator norm for any fixed $1 < p < \infty$) for measures in $\R^n$
can grow exponentially in the dimension $n$.  In the converse
direction, a well-known application of the Besicovitch covering
lemma \cite{Bes45,Bes46} shows that $\|M\|_{L_1(X) \to
L_{1,\infty}(X)} \leq C^n$ for some absolute constant $C$ whenever
$X$ is a subset of $\R^n$ with the Euclidean metric, and $\mu$ is an
arbitrary Radon measure.  In particular, as observed in \cite{shd},
this shows that the constants in the Besicovitch covering lemma must
grow exponentially in the dimension (see also~\cite{FL94}).}
\end{remark}

\subsubsection{Adding more hypotheses}


Despite the example in Proposition \ref{patho}, we know due to Corollary~\ref{coro:our norm bounds} that in many cases the bound \eqref{mok} can be significantly improved. In particular, a more meaningful variant of Proposition~\ref{patho} would be if we also impose the natural uniformity condition that  $\mu(B(x,r))$ is independent of $x\in X$.
As discussed in Section~\ref{sec:norm}, this immediately implies that the averaging operators $A_r$ given in~\eqref{avdef} are now contractions on $L_1(X)$.
  Thus in order for the weak $(1,1)$ operator norm to be large, one needs to have contributions to the set $\{ Mf > \lambda \}$ from several scales $r$, rather than just a single scale as in Proposition \ref{patho}.


Another hypothesis that one can add, in order to make a potential counter-example more meaningful, is that the maximal operator $M$ is already of strong-type $(p,p)$ for all $1 < p \leq \infty$, as we know to be the case for $X=\ell_2^n$, due to Stein's theorem~\cite{Stein82}.
Finally, we can make the task of bounding the maximal operator easier by replacing $M$ with the lacunary maximal operator $ M_{2^\Z}$.

Our first main construction shows that even with all of these additional hypotheses and simplifications, we still cannot improve significantly upon \eqref{mok}.

\begin{theorem}[Doubling example]\label{dyadic-1}  Let $K \geq 1$.  Then there exists a metric measure space $(X,d,\mu)$ with $X$ an Abelian group and $d,\mu$ translation-invariant, such that the doubling condition \eqref{eq:def doubling} holds, and $\|M\|_{L_p(X)\to L_p(X)}\lesssim_p 1$ holds for all $1 < p \leq \infty$ (with the implied constant independent of $K$), but such that
\begin{equation}\label{mosh}
 \| M_{2^\Z} \|_{L_1(X) \to L_{1,\infty}(X)} \geq \frac{K}{48}.
\end{equation}
\end{theorem}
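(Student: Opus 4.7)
The plan is to take $X = \bigoplus_{k=0}^{L-1}H_k$ as a direct sum of $L$ finite abelian groups of order $K$ (say $H_k = \mathbb{Z}_K$), equipped with counting measure $\mu$ and the translation-invariant ``weighted Hamming'' metric
$$d(g) = \sum_{k=0}^{L-1} 2^k\mathbf{1}_{g_k\neq 0},$$
where the number of scales $L$ will be chosen of order $\log K$. A direct computation identifies $B(0,2^j)$ as the union of the subgroup $\bigoplus_{k<j}H_k$ with the ``level-$j$ spokes'' $\{g : g_j\neq 0,\, g_k=0\ \forall k\neq j\}$, giving $|B(0,2^j)| = K^j + (K-1)$, and hence the doubling condition \eqref{eq:def doubling} with constant $\lesssim K$ at every dyadic scale; the intermediate radii are handled by the same argument.

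\textbf{Weak $(1,1)$ lower bound.} Since $\mu$ is translation invariant, each averaging operator $A_r$ is an $L_1(X)$-contraction, so no single-scale witness can give operator norm exceeding $O(1)$; the large weak $(1,1)$ norm must come from the simultaneous contribution of many dyadic scales. I would take the witness to be a weighted sum $f = \sum_{k=0}^{L-1} c_k\mathbf{1}_{A_k}$, where $A_k = \{g : g_k\neq 0,\, g_j=0\ \forall j\neq k\}$ is the level-$k$ spoke set and the coefficients $c_k$ grow geometrically so as to equalize the contribution of each scale to $A_{2^j}f$. A direct calculation then shows that $A_{2^j}f(x)$ is at least of order $K\|f\|_{L_1(X)}/|X|$ on a set of measure comparable to $|B(0,2^j)|$, and the strict additivity of the weighted Hamming metric ensures that the amplification regions at distinct dyadic scales are essentially disjoint. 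A union bound then yields $\mu\bigl(\{M_{2^\Z}f>\lambda\}\bigr)\gtrsim K\|f\|_{L_1(X)}/\lambda$ for a suitable threshold $\lambda$, and the explicit factor of $1/48$ drops out of the final optimization of the thresholds and of $L$.

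\textbf{$L_p$ bound for $p>1$.} The key observation is that each ball $B(0,2^k)$ differs from the subgroup $G_k = \bigoplus_{j<k}H_j$ only by the spoke set of relative measure $(K-1)/K^k$, so that $A_{2^k}f$ coincides with the conditional expectation $\mathbb{E}\bigl[|f|\,\big|\,\sigma(H_k,H_{k+1},\ldots)\bigr]$ up to an absolutely bounded multiplicative error. The $\sigma$-algebras $\sigma(H_k,H_{k+1},\ldots)$ form a decreasing filtration, so the resulting reverse-martingale maximal function is controlled by Doob's $L_p$ inequality with constant $p/(p-1)$, yielding $\|M\|_{L_p(X)\to L_p(X)}\lesssim_p 1$ independent of $K$.

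\textbf{Main obstacle.} The delicate step will be the precise calibration of the weights $c_k$ and the thresholds in Step 2, so that each amplification region contributes independently to the union and one recovers a full factor of $K$ rather than the weaker $K/\log K$. This requires a fine exploitation of the strict sum structure of $d$: any ultrametric-type degeneration would make $A_{2^k}f$ an exact reverse martingale and collapse the weak $(1,1)$ norm to $O(1)$ by Doob, so the precise non-ultrametric structure of the weighted Hamming metric must be used in an essential way to push the weak $(1,1)$ norm all the way up to linear in $K$.
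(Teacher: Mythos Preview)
Your construction cannot work: the weak $(1,1)$ norm of $M_{2^{\Z}}$ on your space is actually $O(1)$, not $\gtrsim K$. The obstacle you flag at the end is fatal, not merely delicate. Writing $G_j=\bigoplus_{k<j}H_k$ and $A_j=\{g:g_j\neq 0,\ g_k=0\ \forall k\neq j\}$, your own ball computation gives $B(0,2^j)=G_j\cup A_j$ with $|G_j|=K^j$ and $|A_j|=K-1$, so
\[
A_{2^j}|f|(x)\ \le\ \E\bigl[|f|\,\big|\,\F_j\bigr](x)\ +\ T_jf(x),\qquad T_jf(x)\stackrel{\mathrm{def}}{=}\frac{1}{K^j+K-1}\sum_{y\in A_j}|f(x+y)|,
\]
where $\F_j$ is the $\sigma$-algebra of cosets of $G_j$. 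The martingale supremum has weak $(1,1)$ constant $1$ by Doob, while each $T_j$ is a convolution average and hence an $L_1$-contraction with $\|T_jf\|_{L_1}\le \frac{K-1}{K^j+K-1}\|f\|_{L_1}$. Summing over $j\ge 1$ gives $\sum_j \frac{K-1}{K^j+K-1}\le \sum_{j\ge 1}(K-1)K^{-j}=1$, so a crude union bound yields $\|M_{2^{\Z}}\|_{L_1\to L_{1,\infty}}\le 4$. No choice of weights $c_k$ or thresholds can rescue this: the point is structural, namely that the non-martingale part of each ball has \emph{relative} measure $\lesssim K^{1-j}$, which is summable.

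The paper's construction is built precisely to avoid this collapse. It first produces (Proposition~\ref{prelim}) a family of \emph{disjoint} sets $E_1,\dots,E_q\subseteq X_q=\FF_q^m$, each of measure $\asymp |X_q|/q$, taken as level sets of a nondegenerate quadratic form; Gauss-sum bounds make the associated averaging operators uniformly bounded on $L_p$ for $p>1$, yet $\sup_j A_{E_j}$ has weak $(1,1)$ norm $\gtrsim q$ (witnessed by $\delta_0$, since the $E_j$ cover $X_q$). One then forms $X=X_q\times\FF_q^q$ with a metric whose ball at scale $4^j$ is $(E_j\times V_j)\cup(X_q\times V_{j-1})$, where $\{V_j\}$ is the standard flag. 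The crucial difference from your picture is that here $|E_j\times V_j|\asymp |X_q\times V_{j-1}|$: the non-martingale ``spoke'' has measure \emph{comparable} to the martingale part, so the lacunary maximal function genuinely dominates $\frac{1}{6}M_q$ and inherits its bad weak $(1,1)$ behavior, while the $L_p$ bound comes from Doob plus the Fourier-analytic $L_p$ bound on $M_q$. In short, to get weak $(1,1)\gtrsim K$ together with $L_p$-boundedness you need large non-nested pieces with small Fourier transform, not thin spokes; the weighted Hamming metric on $(\Z_K)^L$ does not supply them.
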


We prove this theorem in Section \ref{countersec}.  The basic idea is to first build a maximal operator not arising from a metric measure space which is of strong type $(p,p)$ but not of weak type $(1,1)$, and then take an appropriate ``tensor product'' of this operator with a martingale type operator to obtain a new operator which is essentially a lacunary maximal operator associated to a metric measure space.
The constant $48$ in~\eqref{mosh} can of course be improved, but we will not seek to optimize it here.

As stated earlier, we also construct an example of a metric measure space that shows that Corollary~\ref{coro:our norm bounds} is sharp even under the stronger Ahlfors-David regularity condition~\eqref{eq:def AD}.

\begin{theorem}[Ahlfors-David regular example]\label{micro-ex}  Assume that $n \geq 2$.  Then there exists an Abelian group $G$, with invariant measure $\mu$ and an invariant metric $d$, obeying the Ahlfors-David $n$-regularity condition \eqref{eq:def AD} with $K=81$,  such that
\begin{equation}\label{micro-1}
\| M\|_{L_1(G) \to L_{1,\infty}(G)} \gtrsim n \log n,
\end{equation}
and
\begin{equation}\label{micro-2}
 \left\| M_{2^\Z} \right\|_{L_1(G) \to L_{1,\infty}(G)} \gtrsim \log n.
 \end{equation}
Furthermore we have
\begin{equation}\label{micro-3}
\| M \|_{L_p(G) \to L_p(G)} \lesssim_p 1
\end{equation}
for all $1 < p \leq \infty$.
\end{theorem}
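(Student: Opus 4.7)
The plan is to construct $G$ as a suitable product of Abelian groups that amplifies the construction underlying Theorem~\ref{dyadic-1}. Start with an Abelian group $X_0$ with translation-invariant metric and counting measure that is $O(1)$-doubling and whose lacunary maximal operator has weak $(1,1)$ norm bounded below by a universal constant; this is essentially the case $K = O(1)$ of Theorem~\ref{dyadic-1}. To upgrade from doubling to Ahlfors-David $n$-regularity with constant $81$, direct-sum $X_0$ with ``dimension-filling'' Abelian-group factors, carefully chosen so that the combined space has measure $\asymp r^n$ at every scale, while the weak $(1,1)$ deficit of $X_0$ is inherited at the relevant scales via an $\ell_\infty$ product metric.

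For the lower bound \eqref{micro-2} on $M_{2^\Z}$, arrange the construction so that $G$ exhibits $m \asymp \log n$ independent dyadic scales, each of which carries a copy of the $X_0$-style bad behaviour. Because the scales are genuinely separated and a test function can concentrate the weak $(1,1)$ deficit on mutually disjoint level sets, the $m$ contributions add to yield $\|M_{2^\Z}\|_{L_1 \to L_{1,\infty}} \gtrsim m \asymp \log n$. To obtain the stronger bound \eqref{micro-1} for the full operator $M$, engineer $\asymp n$ additional ``sub-scales'' inside each dyadic block $[2^j,2^{j+1}]$ --- naturally placed at radii of the form $2^j(1+k/n)$ for $k=0,\ldots,n-1$, since these are the scales at which the microdoubling condition \eqref{eq:def micro} is barely satisfied on an $n$-regular space --- each contributing its own weak $(1,1)$ deficit of order $1$. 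Summing the $\log n$ dyadic scales together with the $n$ intra-block sub-scales yields $\|M\|_{L_1 \to L_{1,\infty}} \gtrsim n\log n$. The strong $(p,p)$ bound \eqref{micro-3} then follows from uniform strong $(p,p)$ bounds on each scale-block (inherited from the Theorem~\ref{dyadic-1} construction) combined via a Fubini-type argument that exploits the product structure.

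The principal obstacle is balancing three conflicting requirements: Ahlfors-David $n$-regularity with the very small constant $81$ independent of $n$; a weak $(1,1)$ lower bound growing as $n\log n$; and uniform strong $(p,p)$ bounds for every $p>1$. The difficulty is that naive product constructions inflate the Ahlfors-David constant exponentially in the number of factors, whereas the lower bound requires many carefully placed scale-localized bad configurations. Resolving this tension requires a translation-invariant metric finely tailored to the Abelian group structure, so that at any fixed radius only a small number of ``active'' factors contribute nontrivially to a ball while the remaining factors are absorbed into uniform dimension-filling behaviour. This is the technical heart of the argument and drives the choice of the explicit constant $81$; the remainder of the proof --- verifying $n$-regularity, computing the contribution of the engineered test function at each scale, and propagating the $L_p$ bound through the product --- proceeds along lines analogous to those used for Theorem~\ref{dyadic-1}.
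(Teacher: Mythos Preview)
Your proposal identifies the correct overall architecture --- embed a finite ``bad'' Abelian group into a larger one with dimension-filling ultrametric factors to achieve Ahlfors-David regularity at all scales --- but the mechanism you propose for the lower bound is not the one the paper uses, and as stated it has a genuine gap.

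The paper does \emph{not} start from the $K=O(1)$ case of Theorem~\ref{dyadic-1} and then amplify by stacking. Rather, it invokes Proposition~\ref{prelim} directly with $q \asymp n\log n$: a vector space $X_q = \FF_q^m$ (with $m\le\sqrt q$) together with the disjoint level sets $E_1,\dots,E_q$ of a quadratic form, for which the associated maximal operator already satisfies $\|M_q\|_{L_1\to L_{1,\infty}} > q/2 \gtrsim n\log n$ while $\|M_q\|_{L_p\to L_p}\lesssim_p 1$. The finite group of Lemma~\ref{lem:finite AD} is then $X = X_q\times\FF_3^q$ with a metric taking values in $\{3^{j/n}: -M< j\le q\}$, designed so that for $1\le j\le q-k$ the ball $B_j$ contains $E_{j+k}\times V_{j+k}$ and has comparable measure. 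The full $n\log n$ lower bound is thus inherited in one stroke from the single test function $f_q$ witnessing~\eqref{mee}. The extension to genuine $n$-regularity at all scales is then as you outline: sandwich $X$ between two ultrametric $\FF_3$-factors $Z$ and $Y$ handling scales below $3^{a/n}$ and above $3^{b/n}$, and verify regularity, lower bounds, and $L_p$ bounds via Fubini and Doob.

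Your alternative --- stack $\log n$ dyadic blocks each carrying an $O(1)$ deficit, and then $n$ sub-scales within each block each contributing $O(1)$ --- faces a real obstacle you have not addressed. Weak $(1,1)$ constants do not add across scales unless a \emph{single} test function simultaneously realizes the deficit at every scale on pairwise disjoint level sets; arranging this while also keeping the Ahlfors-David constant bounded by $81$ and the $L_p$ norms uniformly bounded is precisely the crux, and you give no concrete mechanism. A small-$K$ building block from Theorem~\ref{dyadic-1} gives you one contribution of order $O(1)$, not $n\log n$ separate ones to aggregate. The paper sidesteps this entirely by taking $q$ itself of order $n\log n$, so that the covering $X_q=\bigcup_j E_j$ and the point-mass test function $\1_{\{0\}}$ deliver the full lower bound at once.
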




\subsection{The example of the infinite tree}\label{sec:tree intro}



The above examples seem to indicate that the weak $(1,1)$ behavior of the Hardy-Littlewood maximal function can
deteriorate substantially when the doubling constant is large, even when assuming good $L_p$ bounds, as well as
uniformity assumptions on the measure of balls.  Nevertheless, there are some interesting examples of metric measure spaces with very poor (or non-existent) doubling properties, for which one still has a weak $(1,1)$ bound.  We give just one example of this phenomenon, namely the infinite regular tree.

\begin{theorem}[Hardy-Littlewood inequality for the infinite tree]\label{free}  Fix an integer $k \geq 2$, and let $T$ be the infinite rooted $k$-ary tree, with the usual graph metric $d$ and counting measure $\mu$.  Then we have
$$ \| M \|_{L_1(T) \to L_{1,\infty}(T)} \lesssim 1$$
(Thus the implied constant is independent of the degree $k$.)
\end{theorem}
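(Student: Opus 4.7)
The plan is to use a spherical-average reduction combined with a Doob martingale on the end boundary of the tree. Writing $S_j(x) = \{z \in T : d(x,z) = j\}$ for the $j$-th sphere around $x$, the identity $|B(x,r)| = \sum_{j=0}^{r} |S_j(x)|$ expresses the ball average $A_r f(x)$ as a convex combination of the spherical averages $A_j^{\mathrm{sph}} f(x) = |S_j(x)|^{-1}\sum_{z \in S_j(x)} f(z)$ for $0 \le j \le r$. Hence $Mf \le M^{\mathrm{sph}} f := \sup_{j \geq 0} A_j^{\mathrm{sph}} f$ pointwise, and it suffices to establish the weak $(1,1)$ bound for $M^{\mathrm{sph}}$.

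For $x$ at depth $d(x) \ge r$, let $y_r = p(x,r)$ denote the apex of $B(x,r)$ (the ancestor of $x$ at distance $r$), and write $T_y^s$ for the subtree rooted at $y$ truncated at depth $s$. The key tree-specific fact is that $B(x,r) \subseteq T_{y_r}^{2r}$, and the $k^r$ balls sharing apex $y_r$ and radius $r$ together cover $T_{y_r}^{2r}$ with average multiplicity~$1$, even though $|B(x,r)| \asymp k^r$ and $|T_{y_r}^{2r}| \asymp k^{2r}$. To exploit this, I would equip the end boundary $\partial T$ with its uniform probability measure $\nu$ (which satisfies $\nu(\partial T_y) = k^{-d(y)}$) and introduce the non-negative martingale $F_L(\xi) = \sum_{\ell < L} k^\ell f(\xi_\ell) + k^L \sum_{z \in T_{\xi_L}} f(z)$ relative to the natural filtration $\mathcal{F}_L = \sigma(\xi_0,\ldots,\xi_L)$. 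It integrates to $\|f\|_1$, so Doob's maximal inequality gives $\nu(\{\sup_L F_L > T\}) \le \|f\|_1/T$ for every $T > 0$. For each $x$ with $A_r f(x) > \lambda$ and apex $y$ at depth $L = d(x) - r$, one checks that $F_L(\xi) \ge k^L \sum_{z \in B(x,r)} f(z) > c\lambda\, k^{d(x)}$ for every $\xi \in \partial T_y$, and combining with Doob at threshold $c\lambda k^{d(x)}$ forces the per-vertex constraint $k^{r} \le \|f\|_1/(c\lambda)$.

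The main obstacle is converting this boundary-measure bound into the vertex count $|\{Mf > \lambda\}|$: a single apex $y$ at depth $L$ can serve up to $k^r$ bad vertices $x$ at depth $L+r$, while contributing only $\nu(\partial T_y) = k^{-L}$ to the boundary measure. The exponents $k^r$ and $k^{-L}$ must cancel correctly against the Doob threshold $\lambda k^{L+r}$ to give a $k$-independent estimate. I would handle this via a Vitali-style sparsification restricting to an antichain of ``topmost'' apexes (pairwise incomparable, so their subtrees $T_y$ are disjoint); for each such apex $y$, the number of bad vertices with apex inside $T_y$ is at most $\sum_{r \text{ active}} k^r \lesssim k^{r_{\max}(y)} \le \sum_{T_y} f/(c\lambda)$, and summing over the antichain yields $|\{Mf > \lambda\}| \lesssim \|f\|_1/\lambda$ via the disjointness of the $T_y$'s. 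The boundary case $r > d(x)$ (where the ball contains the root and the apex chain terminates) is handled separately, using that such balls essentially coincide with balls centered at the root and the classical weak $(1,1)$ estimate for a single-chain martingale applies.
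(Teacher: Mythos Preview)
Your reduction to the spherical maximal function is correct and matches the paper's first step. However, the Vitali-style sparsification argument contains a genuine gap. The claim that ``for each topmost apex $y$, the number of bad vertices with apex inside $T_y$ is at most $\sum_{r \text{ active}} k^r$'' is false. Consider $f=\mathbf{1}_E$ where $E$ is the full level at depth $D$, and $\lambda=1/(2k)$. The bad vertices at depths $D-1$, $D$, $D+1$ have apexes at depths $D-2$ and $D$; the topmost apexes form the full level at depth $D-2$. For each such $y$ the only active radii are $r=0,1$, so your bound gives $\sum_r k^r\asymp k$, whereas the actual count of bad vertices with apex in $T_y$ is $k+k^2+k^3\asymp k^3$ (coming from the $k^2$ apexes at depth $D$ below $y$, each serving $k$ bad descendants). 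The problem is that a single topmost apex $y$ can dominate many \emph{different} lower apexes $y'\in T_y$, and the $k^r$ bound applies per apex, not per subtree; summing $k^r$ over the active radii ignores this multiplicity entirely. The martingale inequality on $\partial T$ controls mass in subtrees, but balls $B(x,r)$ occupy only a $k^{-r}$ fraction of the truncated subtree $T^{2r}_{p(x,r)}$, and your counting step does not recover that loss.

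The paper takes a completely different route, with no martingale or covering argument. The key input is an \emph{expander-type bilinear estimate}: for any finite $E,F\subseteq T$ and any $r$,
\[
|\{(x,y)\in E\times F:\ d(x,y)=r\}|\le 2\,|E|^{1/2}|F|^{1/2}k^{r/2}.
\]
This is proved by a direct combinatorial argument on levels of the tree. Applied with $E$ a superlevel set of $|f|$ and $F$ a superlevel set of the spherical average $A_r^\circ f$, it yields a distributional inequality for $A_r^\circ$ with an explicit $k^{-r/2}$ decay, and summing over $r$ gives the weak $(1,1)$ bound. The square-root cancellation in the expander estimate is precisely what compensates for the $k^{-r}$ density of balls inside subtrees; your boundary-martingale approach does not see this cancellation, which is why the counting step breaks down.
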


We prove this theorem in Section \ref{nevo-sec}.  We remark that the $L_p$ boundedness of this maximal function for $p > 1$ was essentially established by Nevo and Stein in \cite{nevo}.  The argument here proceeds very differently from the usual covering type arguments, which are totally unavailable here due to the utter lack of doubling for this tree.  Instead, we use a more combinatorial argument taking advantage of the ``expander'' or ``non-amenability'' properties of this tree, which roughly asserts that any given finite subset of the tree must have large boundaries at every distance scale.

When $k$ is odd, $T$ is almost\footnote{More precisely, one needs to enlarge the tree at the root to have $k+1$ descendants instead of $k$.  But one can easily check that this change only affects the weak $(1,1)$ norm of the maximal function by a constant at worst.} identifiable with the free group on $\frac{k+1}{2}$ generators.  The above theorem then suggests that a maximal ergodic theorem in $L_1$ should be available for ergodic actions of free groups on measure-preserving systems (the analogous $L_p$ maximal theorems for $p > 1$ being established in \cite{nevo}).  However, the non-amenability of the free group prevents one from applying standard arguments to transfer Theorem \ref{free} to this setting (indeed, our proof of Theorem \ref{free} will rely heavily on this non-amenability).  Thus the following conjecture remains open:

\begin{conjecture}\label{freeconj}  Let $F$ be a finitely generated free group, and let $w \mapsto T_w$ be an ergodic action of $F$ on a probability space $(X, \B, \mu)$.  Then
$$ \left\| \sup_{n \geq 1} \frac{1}{|B(\id,n)|} \sum_{w \in B(\id,n)} |T_w f| \right\|_{L_{1,\infty}(X)} \lesssim \|f\|_{L_1(X)}$$
for all $f \in L_1(X)$, where $B(\id,n)$ is the collection of words in $F$ of length less than $n$.
\end{conjecture}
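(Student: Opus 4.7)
The plan is to adapt the combinatorial/expansion argument behind Theorem \ref{free} from the tree $T$ to an arbitrary ergodic action of $F$, while bypassing the classical Calderón transference scheme, which is unavailable because $F$ is non-amenable, and because the proof of Theorem \ref{free} itself crucially exploits this lack of amenability.

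First, I would reduce to an essentially free, measure-preserving action on a non-atomic Lebesgue probability space: for a non-trivial free $F$ acting ergodically, the stabilizers are almost surely trivial, so the orbit map $\phi_x:F\to X$, $w\mapsto T_w x$, is almost surely injective. Fixing a free generating set identifies the Cayley graph of $F$ with a regular tree $T$. Then the pullback $g_x(w)\stackrel{\mathrm{def}}{=}f(T_w x)$ is a function on $T$, and the ergodic ball average at $x$ is precisely the Hardy-Littlewood tree maximal function of $g_x$ evaluated at the root. Theorem \ref{free} therefore gives, for each $x$, a distributional bound on the level sets of $Mg_x$ with respect to counting measure on $T$, with constant independent of $x$. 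The goal is to integrate these orbit-wise bounds against $\mu$.

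The natural mechanism is to draw a large random finite subset $S_x\subset F$ around the identity whose push-forward $\frac{1}{|S_x|}\sum_{w\in S_x}\delta_{T_w x}$ approximates $\mu$, apply Theorem \ref{free} on $S_x$ at scales much smaller than $\mathrm{diam}(S_x)$, and integrate. The obstruction is the boundary term: every finite $S\subset F$ satisfies $|\partial S|\gtrsim|S|$, so the usual amenable-group argument breaks down, and a literal Calderón transference loses a factor comparable to $|B(\id,n)|$, which is exponential in $n$ and destroys the bound.

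The main obstacle, and where a genuinely new idea is needed, is to replace Calderón transference by a scheme that itself exploits non-amenability, mirroring the tree proof. Two avenues look most promising: (a) a spectral decomposition of the $T_w$ on $L^2(X)$ in the Nevo--Stein spirit, together with an endpoint Kunze--Stein argument to descend from $L^p$ to weak $L^1$; and (b) a Markov-operator reformulation \emph{à la} Bufetov, replacing ball averages by spherical-martingale convex combinations and applying Doob's inequality to the resulting random filtration, in analogy with the random-martingale philosophy of Theorem \ref{local}. Either approach must eventually reproduce on $X$ the combinatorial counting estimate that drives Theorem \ref{free}, uniformly in the scale $n$ and without a logarithmic loss. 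I expect success will require a hybrid: using the orbit pullback and Theorem \ref{free} to locate ``bad'' level sets combinatorially along individual orbits, and then an exchangeability or $U$-statistic argument over the random choice of base point to convert their expected size under $\mu$ into a genuine weak-$(1,1)$ bound on $X$; this last step is the crux, and is the reason the conjecture has remained open.
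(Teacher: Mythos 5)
You have correctly identified that this statement is Conjecture~\ref{freeconj}, which the paper explicitly leaves open; there is no proof in the paper to compare against, and your proposal does not constitute one either. Indeed, you say so yourself: the final sentence acknowledges that the passage from orbit-wise estimates to a weak $(1,1)$ bound on $(X,\mu)$ is ``the crux, and is the reason the conjecture has remained open.'' A review must therefore treat this as a strategy sketch with a genuine, unfilled gap rather than a proof.

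Concretely, the gap is exactly where you place it, and it is fatal as the argument stands. The pullback $g_x(w)=f(T_w x)$ is typically not in $L_1(T)$ with respect to counting measure (by the ergodic theorem its orbit averages converge to $\int f\,d\mu\neq 0$, so $\sum_w |g_x(w)|=\infty$ for a.e.\ $x$ unless $f$ averages to zero, and even then there is no useful bound on $\sum_w |g_x(w)|$), so Theorem~\ref{free} cannot be applied ``for each $x$'' as stated. The usual remedy is a truncation to a finite window $S_x\subset F$, but you correctly note that $|\partial S|\gtrsim|S|$ kills the Calder\'on transference; without a quantitative replacement this step produces exponential losses in the radius. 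Your two proposed avenues, a spectral/Kunze--Stein endpoint argument and a Markov-operator or spherical-martingale reformulation, are plausible directions to look (and the latter is in the spirit of Bufetov's work and the random-martingale philosophy of Section~\ref{frag-sec}), but neither is carried out, and neither is known to yield the weak $(1,1)$ endpoint. The ``exchangeability / $U$-statistic'' step you gesture at for converting orbit counting estimates into a measure-theoretic bound is precisely the missing lemma; as written it is an aspiration, not an argument. Your proposal is a reasonable survey of obstacles and possible tools, but it does not close, and should not be presented as, a proof.
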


We remark that by applying the pointwise convergence theorems in \cite{nevo} and a standard density argument, Conjecture \ref{freeconj} would imply the pointwise convergence result
$$ \lim_{n \to \infty} \frac{1}{|B(\id,n)|} \sum_{w \in B(\id,n)} T_w f(x) = \int_X f\ d\mu$$
for all $f \in L_1(X)$ and almost every $x \in X$.  This result is currently known for $f \in L_p(X)$ for $p>1$, due to \cite{nevo}.

\medskip

\noindent {\bf Acknowledgements. }We thank Raanan Schul for pointing out that the Lindenstrauss maximal inequality implies the  Hardy-Littlewood maximal inequality under strong microdoubling, and Zubin Guatam for explaining the proof of the Lindenstrauss maximal inequality. A. N. was supported in
 part by NSF grants  CCF-0635078
and CCF-0832795, BSF grant 2006009, and the Packard Foundation.
T. T. was supported by a grant from the MacArthur foundation, by NSF grant
DMS-0649473, and by the NSF Waterman award.

\section{Doob-type maximal inequalities}\label{sec:doob}


Let $(X,d,\mu)$ be a metric measure space with $\mu(X)<\infty$ (more
generally, the arguments below extend to the $\sigma$-finite case).
If $\F$ is a $\sigma$-algebra of measurable sets in $X$, we let
$L_p(\F)$ denote the space of $L_p(X)$ functions which are
$\F$-measurable.  The orthogonal projection from $L_2(X)$ to the
closed subspace $L_2(\F)$ will be denoted $f \mapsto \E(f|\F)$, and
as is well known it extends to a contraction on $L_p(X)$ for all $1
\leq p \leq \infty$. The following important inequality of Doob is
classical (see~\cite{Doob90,Durrett96}).



\begin{proposition}[Doob's maximal inequality]\label{doob}  Let $\F_0\subseteq \F_1\subseteq \F_2\subseteq \cdots$ be
an increasing sequence of $\sigma$-algebras. Then we have
$$ f \in L_1(X)\implies \left\| \sup_{k\ge 0} \big| \E( f|\F_k) \big| \right\|_{L_{1,\infty}(X)} \leq \|f\|_{L_1(X)},$$
and for $1 < p \leq \infty$,
$$f \in L_p(X)\implies \left\| \sup_{k\ge 0} \big| \E( f|\F_k) \big| \right\|_{L_p(X)} \le \frac{p}{p-1} \|f\|_{L_p(X)}.$$
\end{proposition}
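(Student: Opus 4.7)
The plan is to establish the classical stopping-time version of the argument, which simultaneously gives the weak $(1,1)$ bound and, through a layer-cake integration, the sharp strong $(p,p)$ constant $p/(p-1)$.

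First I would reduce to the case where $f$ is real-valued and nonnegative. Since $|\E(f|\F_k)| \le \E(|f|\,|\,\F_k)$ pointwise, replacing $f$ by $|f|$ only increases $\sup_k |\E(f|\F_k)|$ while preserving $\|f\|_{L_1}$ and $\|f\|_{L_p}$, so we may assume $f \ge 0$ and write $f_k \stackrel{\mathrm{def}}{=} \E(f|\F_k)$ and $f^* \stackrel{\mathrm{def}}{=} \sup_{k\ge 0} f_k$. In the usual way it suffices to bound $\sup_{0 \le k \le N} f_k$ with constants independent of $N$, then let $N \to \infty$ by monotone convergence.

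The core step is the stopping-time argument giving the stronger distributional estimate
$$ \lambda \cdot \mu\bigl( f^* > \lambda \bigr) \;\le\; \int_{\{f^* > \lambda\}} f\,d\mu $$
for every $\lambda > 0$. To prove it, fix $\lambda$ and define $\tau(x) \stackrel{\mathrm{def}}{=} \inf\{k \ge 0 : f_k(x) > \lambda\}$ (with $\tau = \infty$ if no such $k$ exists). The level set decomposes as a disjoint union $\{f^* > \lambda\} = \bigsqcup_{k \ge 0} E_k$, where $E_k \stackrel{\mathrm{def}}{=} \{\tau = k\} \in \F_k$. On $E_k$ we have $f_k > \lambda$, so $\lambda\, \mu(E_k) \le \int_{E_k} f_k\,d\mu = \int_{E_k} \E(f|\F_k)\,d\mu = \int_{E_k} f\,d\mu$, using that $E_k \in \F_k$ in the last equality. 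Summing over $k$ yields the displayed inequality, and taking $\lambda \cdot \mu(f^* > \lambda) \le \|f\|_{L_1(X)}$ gives the weak $(1,1)$ estimate.

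For the strong $(p,p)$ bound with $1 < p < \infty$, I would use the layer-cake formula together with the stronger distributional inequality just proved:
$$ \|f^*\|_{L_p(X)}^p = p\int_0^\infty \lambda^{p-1} \mu(f^* > \lambda)\, d\lambda \;\le\; p\int_0^\infty \lambda^{p-2}\!\!\int_{\{f^* > \lambda\}}\!\! f\,d\mu\, d\lambda. $$
Swapping the order of integration via Fubini, the inner $\lambda$-integral runs over $(0, f^*(x))$ and evaluates to $(f^*)^{p-1}/(p-1)$, giving
$$ \|f^*\|_{L_p(X)}^p \;\le\; \tfrac{p}{p-1} \int_X f \cdot (f^*)^{p-1}\, d\mu \;\le\; \tfrac{p}{p-1} \|f\|_{L_p(X)} \|f^*\|_{L_p(X)}^{p-1} $$
by H\"older's inequality. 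Dividing through by $\|f^*\|_{L_p(X)}^{p-1}$ yields the desired $p/(p-1)$ bound; the case $p = \infty$ is trivial since conditional expectation is an $L_\infty$ contraction. The only subtle point is justifying the division, i.e.\ knowing a priori that $\|f^*\|_{L_p}$ is finite. This is the one place I expect to need care: I would first truncate by working with $\sup_{0 \le k \le N} f_k \le \sum_{k=0}^N f_k \in L_p(X)$ to make the left-hand side automatically finite, run the argument with this finite supremum in place of $f^*$ (the stopping-time inequality is easily seen to hold in this truncated form), obtain the bound $p/(p-1)$ uniformly in $N$, and then let $N \to \infty$ by monotone convergence.
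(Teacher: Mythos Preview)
Your proof is correct and is precisely the classical stopping-time argument; the paper does not actually prove Proposition~\ref{doob} but simply cites it as standard (referring to \cite{Doob90,Durrett96}), so there is nothing to compare against beyond noting that your argument is the textbook one.
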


We now establish a variant of this inequality, in which the
expectations $\E(f|\F_k)$ are replaced by more general sublinear
operators.

\begin{theorem}[Modified Doob's inequality]\label{doob2}  Let $\F_0\subseteq \F_1\subseteq \F_2\subseteq \cdots$ be
an increasing sequence of $\sigma$-algebras and fix $1 \leq p <
\infty$.  For each $k \in \N$ let $M_k$ be a sublinear
operator\footnote{By this we mean that $|M_k(f+g)| \leq |M_k(f)| +
|M_k(g)|$ and $|M_k(cf)| = |c| \cdot|M_k f|$ for all functions $f,g$ in
the domain of $M_k$ and all constants $c\in \R$.} defined on $L_p(X)
+ L_\infty(X)$ such that we have the bounds
\begin{equation}\label{mana}
f \in L_p(X)\implies \| M_k f \|_{L_{p,\infty}(X)} \leq A \|
f\|_{L_p(X)},
\end{equation}
and
\begin{equation}\label{mnf}
f \in L_\infty(X)\implies \| M_k f \|_{L_\infty(X)} \leq B \left\|
\E\left(|f|\bigl|\F_k\right)\right\|_{L_\infty(X)}.
\end{equation}
  Suppose also that we have the
localization property
\begin{equation}\label{emf}
f \in L_p(X) + L_\infty(X)\  \wedge\  E_{k} \in \F_{k}\implies
\1_{E_{k}} M_{k+1} f = M_{k+1} \left(\1_{E_{k}} f\right).
\end{equation}
Then we have
$$ \left\| \sup_{k\ge 0} |M_k f| \right\|_{L_{p,\infty}(X)} \leq \left( (2A)^p + (2B)^p \right)^{1/p} \| f \|_{L_p(X)}$$
for all $f \in L_p(X)$.
\end{theorem}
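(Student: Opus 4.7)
The plan is to adapt the Calder\'on--Zygmund decomposition to this abstract setting using a stopping time built from the nonnegative martingale $\bigl(\E(|f|\bigl|\F_k)\bigr)_{k \geq 0}$. Fix $\lambda > 0$ and set $\alpha := \lambda/2$. Introduce the $(\F_k)$-stopping time
$$
K(x) := \inf\{k \geq 0 \,:\, \E(|f|\bigl|\F_k)(x) > \alpha/B\},
$$
and let $E := \{K < \infty\}$, $E_k := \{K = k\}$. The $E_k$ are pairwise disjoint, $\{K \geq k\} \in \F_{k-1}$ for $k \geq 1$, and $\{K > k\} \in \F_k$. Doob's weak $(p,p)$ inequality applied to the nonnegative submartingale $\E(|f|\bigl|\F_k)$ gives
$$
\mu(E) \leq (B/\alpha)^p \|f\|_{L_p(X)}^p = (2B/\lambda)^p \|f\|_{L_p(X)}^p,
$$
which will supply the $(2B)^p$ summand in the final bound.

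On $E^c$, which is contained in $\{K \geq k\}$ for every $k$, the localization property \eqref{emf} applied to the $\F_{k-1}$-set $\{K \geq k\}$ (trivial when $k = 0$) gives $M_k f = M_k\bigl(\1_{\{K \geq k\}} f\bigr)$. I split $\1_{\{K \geq k\}} f = f_k^{(1)} + f_k^{(2)}$ with $f_k^{(1)} := \1_{\{K > k\}} f$ and $f_k^{(2)} := \1_{E_k} f$. Since $\{K > k\} \in \F_k$ is the set on which $\E(|f|\bigl|\F_k) \leq \alpha/B$,
$$
\E\bigl(|f_k^{(1)}|\bigl|\F_k\bigr) = \1_{\{K > k\}} \E(|f|\bigl|\F_k) \leq \alpha/B
$$
pointwise, and hypothesis \eqref{mnf} yields the uniform bound $\|M_k f_k^{(1)}\|_{L_\infty(X)} \leq \alpha = \lambda/2$ (the passage from $L_\infty$ to $L_p$ is a routine truncation argument using \eqref{mana}). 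Sublinearity then gives $\sup_{k \geq 0} |M_k f| \leq \lambda/2 + \sup_{k \geq 0} |M_k f_k^{(2)}|$ almost everywhere on $E^c$. For the remaining supremum, hypothesis \eqref{mana} gives $\mu(|M_k f_k^{(2)}| > \lambda/2) \leq (2A/\lambda)^p \int_{E_k} |f|^p \, d\mu$ for each $k$, and the pairwise disjointness of the $E_k$ collapses the countable union bound:
$$
\mu\!\left(\sup_{k \geq 0} |M_k f_k^{(2)}| > \lambda/2\right) \leq (2A/\lambda)^p \sum_{k \geq 0} \int_{E_k} |f|^p \, d\mu \leq (2A/\lambda)^p \|f\|_{L_p(X)}^p,
$$
contributing the $(2A)^p$ summand. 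Combining these two estimates yields $\mu(\sup_{k \geq 0} |M_k f| > \lambda) \leq \lambda^{-p}\bigl((2A)^p + (2B)^p\bigr) \|f\|_{L_p(X)}^p$, and the theorem follows on multiplying by $\lambda^p$ and taking the supremum over $\lambda > 0$.

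The hard step---and the reason all three hypotheses \eqref{mana}, \eqref{mnf}, \eqref{emf} must be used in concert---is the decomposition $\1_{\{K \geq k\}} f = f_k^{(1)} + f_k^{(2)}$. On $E_k$ the conditional expectation $\E(|f|\bigl|\F_k)$ may be arbitrarily large, so \eqref{mnf} cannot directly control $M_k f$ there; this ``bad spike'' must be peeled off into $f_k^{(2)}$ and absorbed through the weak-type bound \eqref{mana}, exploiting the disjointness of the $\{E_k\}_{k \geq 0}$ to keep the countable-union bound finite, while the residual $f_k^{(1)}$ is engineered so that $\E(|f_k^{(1)}|\bigl|\F_k)$ is globally bounded by $\alpha/B$ and \eqref{mnf} applies uniformly in $k$. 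The localization hypothesis \eqref{emf} is what makes this per-piece bookkeeping meaningful at the operator level, by letting us replace $M_k f$ on $E^c$ by $M_k$ of the $\F_{k-1}$-restriction $\1_{\{K \geq k\}} f$.
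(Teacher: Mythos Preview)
Your proof is correct and is essentially the same argument as the paper's, recast in stopping-time language: your sets $\{K\ge k\}$, $\{K=k\}$, $\{K>k\}$ are exactly the paper's $A_k$, $\Omega_k$, $A_k\setminus\Omega_k$, and your decomposition $f_k^{(1)}+f_k^{(2)}$ is the paper's split $f\1_{A_k\setminus\Omega_k}+f\1_{\Omega_k}$. The only cosmetic difference is that the paper normalizes $\lambda=1$ and writes the argument via set inclusions rather than a stopping time, but the use of \eqref{emf} to localize, \eqref{mnf} on the good piece, and \eqref{mana} plus disjointness on the bad piece is identical.
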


\begin{remark} {\em Observe that the properties \eqref{mnf},
\eqref{emf} (with $B=1$) are satisfied by the projection operator
$M_{k+1} f \stackrel{\mathrm{def}}{=} \E(f|\F)$ whenever $\F_k \subseteq \F\subseteq
\F_{k+1}$. Thus \eqref{mnf}, \eqref{emf} can be viewed together as a
kind of assertion that $M_{k+1}$ lies ``between'' $\F_k$ and
$\F_{k+1}$ in some sense.}
\end{remark}

\begin{proof}  By monotone convergence we may restrict the supremum over $k\ge 0$ to a finite range, say
$0\leq k \leq K$ for some finite $K\in \N$.  We can then assume
without loss of generality that $\F_k$ is the trivial algebra
$\{\emptyset,X\}$ for all $k <0$. By homogeneity it suffices to show
that
\begin{eqnarray}\label{eq:lmabda=1}
f \in L_p(X)\implies \mu\left( \sup_{0\leq k \leq K} |M_k f| > 1
\right) \leq \left( (2A)^p + (2B)^p \right) \int_X |f|^p\ d\mu.
\end{eqnarray}

Fix $f\in L_p(X)$ and note that Doob's maximal inequality implies
that
\begin{eqnarray*}
\mu\left( \sup_{0\le k\le K} \E\left(|f|\bigr|\F_k\right) \geq
\frac{1}{2B}\right) \leq \mu\left(  \sup_{0\le k\le K} \E\left(
|f|^p \bigr|\F_k\right) \geq \frac{1}{(2B)^p} \right)  \leq (2B)^p
\int_X |f|^p\ d\mu.
\end{eqnarray*}
Thus in order to prove~\eqref{eq:lmabda=1} it will suffice to show
that \begin{eqnarray}\label{eq:A} \mu\left( \left\{\sup_{0\leq k
\leq K} |M_k f|
> 1 \right\} \setminus \left\{\sup_{0\le k\le K}
\E\left(|f|\bigr|\F_k\right) \geq \frac{1}{2B} \right\} \right) \leq
(2A)^p \int_X |f|^p\ d\mu.
\end{eqnarray}

Consider the inclusion
\begin{multline}\label{eq:inclusion1}
\left\{\sup_{0\leq k \leq K} |M_k f| > 1 \right\} \setminus
\left\{\sup_{0\le k\le K} \E\left(|f|\bigr|\F_k\right) \geq
\frac{1}{2B} \right\}\\\subseteq \bigcup_{k=0}^K \left\{|M_kf|>1 \
\wedge\  \sup_{0\le j<k} \E\left(|f|\bigr|\F_j\right) <
\frac{1}{2B}\right\}.
\end{multline}
Therefore, if we introduce the sets
$$ A_k \stackrel{\mathrm{def}}{=} X \setminus \bigcup_{0\le j < k} \left
\{ \E\left(|f|\bigr|\F_{j}\right) \geq \frac{1}{2B} \right\}, $$ and
$$ \Omega_k \stackrel{\mathrm{def}}{=} \left\{
\E\left(|f|\bigr|\F_k\right) \geq \frac{1}{2B} \right\} \cap A_k.$$
Then $A_k\in \F_{k-1}$, the sets $\Omega_k$ are disjoint, and
using~\eqref{emf} we see that~\eqref{eq:inclusion1} implies the
inclusion
\begin{multline}\label{eq:inclusion2}
\left\{\sup_{0\leq k \leq K} |M_k f| > 1 \right\} \setminus
\left\{\sup_{0\le k\le K} \E\left(|f|\bigr|\F_k\right) \geq
\frac{1}{2B} \right\}\subseteq \bigcup_{k=0}^K
\left\{|\1_{A_k}M_kf|>1\right\}\\= \bigcup_{k=0}^K
\left\{|M_k(\1_{A_k}f)|>1\right\}.
\end{multline}
On the other hand, from~\eqref{mnf} we have
\begin{multline*}
\left\| M_k( f \1_{A_k \setminus \Omega_k} ) \right\|_{L_\infty(X)}
\leq B \left\| \E\left( |f| \1_{A_k \setminus \Omega_k} \bigr| \F_k
\right) \right\|_{L_\infty(X)} = B \left\|
\E\left(|f|\bigr|\F_k\right) \1_{A_k \setminus \Omega_k}
\right\|_{L_\infty(X)} \\\le B \cdot\frac{1}{2B} = \frac{1}{2}.
\end{multline*}
Hence by the sublinearity of $M_k$ we have the following inclusion
(up to sets of measure zero):
\begin{eqnarray}\label{eq:inclusion3}\left\{ |M_k (f \1_{A_k})|
> 1 \right\} \subseteq \left\{ |M_k (f \1_{\Omega_k})| > \frac{1}{2}
\right\}.\end{eqnarray} Combining~\eqref{eq:inclusion2}
with~\eqref{eq:inclusion3} and the assumption~\eqref{mana}, we
obtain
\begin{multline*}
\mu\left( \left\{\sup_{0\leq k \leq K} |M_k f| > 1 \right\}
\setminus \left\{\sup_{0\le k\le K} \E\left(|f|\bigr|\F_k\right)
\geq \frac{1}{2B} \right\} \right) \le\sum_{k=0}^K \mu\left(  |M_k
(f \1_{\Omega_k})| > \frac{1}{2}  \right)\\\le \sum_{k=0}^K (2A)^p
\int_{\Omega_k} |f|^p\ d\mu
=(2A)^p\int_{\bigcup_{k=0}^K \Omega_k} |f|^pd\mu\le (2A)^p \int_X
|f|^p\ d\mu.
\end{multline*}
This is precisely the estimate~\eqref{eq:A}, as desired.
\end{proof}

\section{Localization of maximal inequalities}\label{frag-sec}

Let $(X,d,\mu)$ be a bounded metric measure space. Given a partition
$\P$ of $X$ and $x\in X$, we denote by $\P(x)$ the unique element of
$\P$ containing $X$. We shall say that a sequence $\{\P_k\}_{k=0}^\infty$ of
partitions of $X$  is a {\em partition tree} if the following conditions hold true:
\begin{itemize}
\item $\P_0$ is the trivial partition $\{X\}$.
\item For every $x\in X$ and $k\in \{0\}\cup\N$ we have
\begin{equation}\label{eq;power 2 decay}
\diam(\P_k(x))\le
\frac{\diam(X)}{2^k}.
\end{equation}
\item For every $k\in \{0\}\cup\N$ the partition $\P_{k+1}$ is
a refinement of the partition $\P_k$, i.e., for every $x\in X$ we
have $\P_{k+1}(x)\subseteq \P_k(x)$.
\end{itemize}
For $\beta>0$, a probability distribution $\Pr$ over partition trees $\{\P_k\}_{k=0}^\infty$ is said to be
\emph{$\beta$-padded} if for every $x\in X$ and every
$k\in \N$,
\begin{eqnarray}\label{eq:padd} \Pr\left[
B\left(x,\frac{\beta\diam(X)}{2^k}\right)\subseteq \P_k(x)\right]\ge \frac12.
\end{eqnarray}
Note that~\eqref{eq:padd} has the following simple consequence,
which we will use later: for every measurable set $\Omega\subseteq
X$ denote
\begin{eqnarray}\label{eq:def pad}
\Omega^{\pad(k)}_\beta\stackrel{\mathrm{def}}{=}\left\{x\in \Omega:\ B\left(x,\frac{\beta\diam(X)}{2^k}\right)\subseteq \P_k(x)\right\}.
\end{eqnarray}
Thus $\Omega^{\pad(k)}_\beta$ is a random subset of $\Omega$. By Fubini's theorem we have:
\begin{eqnarray}\label{eq:expected}
\E
\left[\mu\left(\Omega^{\pad(k)}_\beta\right)\right]=\int_{\Omega}\Pr\left[
B\left(x,\frac{\beta\diam(X)}{2^k}\right)\subseteq \P_k(x)\right]
d\mu(x) \stackrel{\eqref{eq:padd}}{\ge} \frac{\mu(\Omega)}{2}.
\end{eqnarray}

\begin{remark}\label{rem:measurability} {\em In the definitions above we implicitly made the assumptions that certain events are measurable in the appropriate measure spaces. Namely, for~\eqref{eq:padd} we need the event $\left\{B\left(x,\frac{\beta\diam(X)}{2^k}\right)\subseteq \P_k(x)\right\}$ to be $\Pr$-measurable for every $x\in X$ and $k\in \{0\}\cup \N$, and for~\eqref{eq:expected} we need the event $\left\{\left(x,\{\P_k\}_{k=0}^\infty\right):\ x\in \Omega\ \wedge\  B\left(x,\frac{\beta\diam(X)}{2^k}\right)\subseteq \P_k(x)\right\}$ to be measurable with respect to $\mu\times \Pr$ for all $k\in \{0\}\cup \N$. These assumptions will be trivially satisfied in the concrete constructions below.}
\end{remark}

\begin{remark}\label{rem:arbitrary choices}
{\em In the above definitions we made some arbitrary choices: the factor $\frac{1}{2^k}$ in~\eqref{eq;power 2 decay} can be taken to be some other factor $r_k>0$, and the  $\frac12$ lower bound on the probability in~\eqref{eq:padd} can be taken to be some other probability $p_k$. Since we will not use these additional degrees of freedom here, we chose not to mention them for the sake of simplifying notation. But, the arguments below can be easily carried out in greater generality, which might be useful for future applications of these notions.}
\end{remark}

The following lemma deals with the existence of padded random partition trees on
microdoubling metric measure spaces. The argument is similar to the proof of
Theorem 3.17 in~\cite{LN05}, which is based on ideas from the
theoretical computer science literature~\cite{CKR04,FRT04}. The last
part of the argument is in the spirit of the proof of the main
padding inequality in~\cite{MN07}.

\begin{lemma}\label{lem:decomp} Fix $n\ge 1$ and $K\ge 5$. Let $(X,d,\mu)$ be a separable
 bounded metric measure space which satisfies~\eqref{eq:def micro}.
Then $X$ admits a $\frac{1}{16n\log K}$-padded probability distribution over
partition trees.
\end{lemma}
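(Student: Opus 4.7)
My plan is to construct the padded random partition tree using a hierarchical CKR/FRT-style random partitioning scheme (after Calinescu--Karloff--Rabani and Fakcharoenphol--Rao--Talwar), adapted to the measure-theoretic setting and exploiting the microdoubling condition. At each scale $\Delta_k = \diam(X)/2^k$ (for $k \ge 1$), I would build an independent single-level random partition $\P_k'$ of $X$ as follows: sample a radius $R_k$ uniformly on $[\Delta_k/4, \Delta_k/2]$ (so every cluster has diameter at most $\Delta_k$), and independently sample a random ``ranking'' $\tau$ on $X$ via a Poisson process with spatial intensity $d\mu$ marked by uniform times in $[0,1]$. Assign each $x \in X$ to the cluster of the $\tau$-minimal point $y^*(x)$ in $B(x, R_k)$. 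Set $\P_0 := \{X\}$ and define $\P_k$ as the common refinement of $\P_1', \ldots, \P_k'$; this is automatically a partition tree satisfying the diameter bound. Measurability subtleties are handled using separability of $X$ by working with a countable $\mu$-dense subset.

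The central estimate is a sharp single-level cutting bound. For fixed $x$ and $\gamma$ small relative to $\Delta_k$, the event $B(x, \gamma) \not\subseteq \P_k'(x)$ is exactly $d(x, y^*(x)) \in (R_k - \gamma, R_k]$. Conditioning on $R = R_k$, the Poisson construction gives the conditional probability
$$1 - \frac{\mu(B(x, R-\gamma))}{\mu(B(x, R))} \;\le\; \log\frac{\mu(B(x, R))}{\mu(B(x, R-\gamma))}$$
via the elementary inequality $1 - e^{-s} \le s$. Integrating over $R$ uniform on $[\Delta_k/4, \Delta_k/2]$ and applying a Fubini-type shift in the variable $R$ then yields
$$\Pr[B(x, \gamma) \not\subseteq \P_k'(x)] \;\lesssim\; \frac{\gamma}{\Delta_k} \cdot \log\frac{\mu(B(x, \Delta_k/2))}{\mu(B(x, \Delta_k/8))}.$$
The log factor involves only a bounded ratio of radii, so iterating the microdoubling condition \eqref{eq:def micro} at most $O(n)$ times bounds it by $O(n \log K)$. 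Thus $\Pr[B(x, \gamma) \not\subseteq \P_k'(x)] \lesssim (\gamma/\Delta_k) n \log K$, with \emph{no} spurious $\log(1/\gamma)$ factor.

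To finish, I combine these estimates across scales. For $B(x, \beta \Delta_k)$ to lie in $\P_k(x)$ it must fit inside $\P_j'(x)$ for every $j \le k$, so by the union bound and the single-level estimate applied with $\gamma = \beta\Delta_k$ at scale $\Delta_j$, the total cutting probability is at most $\sum_{j=1}^{k} O(\beta \cdot 2^{j-k} \cdot n \log K) = O(\beta n \log K)$, the geometric series converging. Choosing $\beta = \frac{1}{16 n \log K}$ with absolute constants tracked carefully forces this total to be at most $1/2$, proving the padding property. The principal obstacle, and the reason for the carefully chosen range $[\Delta_k/4,\Delta_k/2]$ for $R$, is to make the single-level cutting bound involve only a \emph{local} measure ratio (radii differing by a constant factor) rather than the global entropy $\log(\mu(B(x, \Delta_k))/\mu(B(x, \gamma)))$ that a direct FRT-style analysis produces; handling the local entropy via microdoubling is exactly what avoids an extra $\log(n \log K)$ loss and is where the spirit of the padding inequality of Mendel--Naor enters.
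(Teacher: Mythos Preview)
Your overall approach is exactly the one the paper uses: the same CKR/FRT random-radius partitioning at each dyadic scale, the same union bound over levels $j\le k$ giving a geometric series, and the same telescoping of $\int [h(R+\gamma)-h(R-\gamma)]\,dR$ (with $h(r)=\log\mu(B(x,r))$) so that only a \emph{local} measure ratio over radii differing by a bounded factor survives, which is then controlled by iterating microdoubling $O(n)$ times. So the ``spirit'' and all the key ideas match.

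However, your single-level cutting analysis contains a genuine slip. The event $B(x,\gamma)\not\subseteq\P_k'(x)$ is \emph{not} characterized by $d(x,y^*(x))\in(R_k-\gamma,R_k]$: even if $y^*(x)$ (the $\tau$-minimal point in $B(x,R_k)$) lies well inside $B(x,R_k-\gamma)$, a point $w$ with $R_k<d(x,w)\le R_k+\gamma$ and $\tau(w)<\tau(y^*(x))$ can still capture some $z\in B(x,\gamma)$, since such $w$ may lie in $B(z,R_k)$. The correct sufficient condition for padding is that the $\tau$-minimal point in $B(x,R_k+\gamma)$ lies in $B(x,R_k-\gamma)$, giving the conditional bound $1-\mu(B(x,R-\gamma))/\mu(B(x,R+\gamma))$ rather than your $1-\mu(B(x,R-\gamma))/\mu(B(x,R))$. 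This is exactly the inclusion the paper proves carefully (the ``CKR'' step) before its Jensen/telescoping computation; with this correction the rest of your argument goes through unchanged. A second, more technical point: a Poisson process of intensity $d\mu$ may place no points in $B(x,R_k)$ with positive probability (and this probability does not vanish as $k\to\infty$), so $y^*(x)$ need not be defined. The paper sidesteps this by using an infinite i.i.d.\ sequence drawn from the normalized measure, which a.s.\ hits every ball of positive measure; you should do the same, or otherwise specify how the partition is completed when the relevant ball is empty.
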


\begin{remark}\label{rem:volberg}
{\em Let $(X,d)$ is a separable complete and bounded metric space which is doubling with constant $\lambda$, i.e., every ball in $X$ can be covered by at most $\lambda$ balls of half the radius. It is a classical fact, due to Vol{\cprime}berg and
Konyagin~\cite{VK87} in the case of compact spaces, and Luukkainen
and Saksman~\cite{LS98} in the case of general complete spaces (see
also~\cite{Wu98} and chapter 13 in~\cite{Hei01}), that $X$ admits a
non-degenerate measure $\mu$ which is doubling with constant
$\lambda^2$ (the power $2$ can be replaced here by any power bigger than $1$). Thus the conclusion of Lemma~\ref{lem:decomp} holds in this case with $n=1$ and $K=\lambda^2$.
}
\end{remark}

\begin{proof}[Proof of Lemma~\ref{lem:decomp}] By
rescaling the metric we may assume without loss of generality that
$\diam(X)=1$. Since $X$ is bounded, $\mu(X)<\infty$, and we may
therefore normalize $\mu$ to be a probability measure. Let
$x_1,x_2,x_3,\ldots$ be points chosen uniformly and independently at
random from $X$ according to the measure $\mu$, i.e.,
$(x_1,x_2,\ldots)$ is distributed according to the probability
measure $\mu^{\otimes \aleph_0}$. For each $k$ let $r_k$ be a random
variable that is distributed uniformly on the interval
$\left[2^{-k-2},2^{-k-1}\right]$. We assume that
$r_1,r_2,\ldots$ are independent. Let $\Pr$ denote the joint
distribution of $(x_1,x_2,\ldots),(r_1,r_2,\ldots)$.

For every $k\in \N$ define a random variable $j_k:X\to
\N\cup\{\infty\}$ by
$$
j_k(x)\stackrel{\mathrm{def}}{=} \inf\left\{j\in \N\cup\{\infty\}:\ d(x,x_j)\le r_k
\right\}.
$$
Note that $j_k(x)$ is almost surely finite for
every $x\in X$, since each $x_j$ has positive probability of falling
into $B(x,r_k)\supseteq B\left(x,2^{-k-2}\right)$ (see the
argument in~\cite{LN05} for more details). Since $X$ is separable, it
follows that the event $\bigcup_{x\in X}\bigcup_{k=1}^\infty
\{j_k(x)<\infty\}$ has probability $1$. From now on we will
condition on this event.

For every $k\in \N$ and $\ell_1,\ldots,\ell_k\in \N$ define $$
P(\ell_1,\ldots,\ell_k)\stackrel{\mathrm{def}}{=} \left\{x\in X:\
j_1(x)=\ell_1,\ldots,j_k(x)=\ell_k\right\}.$$ Then $\P_k\stackrel{\mathrm{def}}{=}
\{P(\ell_1,\ldots,\ell_k):\ \ell_1,\ldots,\ell_k\in \N\}$ is a
partition of $X$. By definition
$$
P(\ell_1,\ldots,\ell_k)\subseteq B(x_{\ell_k},r_k)\subseteq
B\left(x_{\ell_k},2^{-k-1}\right),
$$
and for all $k\in \N$,
$$
P(\ell_1,\ldots,\ell_k,\ell_{k+1})\subseteq P(\ell_1,\ldots,\ell_k).
$$
Therefore $\P_{k+1}$ is a refinement of $\P_k$ and $\diam
(\P_k(x))\le 2^{-k}$ for all $x\in X$.

Denote
\begin{equation}\label{eq:def beta}
\beta=\frac{1}{16n\log K}\ .
\end{equation}
Since $K\ge 5$, we have $\beta<\frac{1}{25}$. Fix $k\in \N$ and $x\in X$ and observe that
\begin{multline}\label{eq:union}
\Pr\left[B\left(x,\frac{\beta}{2^{k}}\right)\subseteq
\P_k(x)\right]=\Pr\left[\bigcap_{\ell=1}^k \left\{\forall y\in
B\left(x,\frac{\beta}{2^{k}}\right),\
j_\ell(x)=j_\ell(y)\right\}\right]\\\ge 1-\sum_{\ell=1}^k \Pr\left[
\exists y\in B\left(x,\frac{\beta}{2^{k}}\right),\ j_\ell(x)\neq
j_\ell(y)\right].
\end{multline}
Fix $\ell\in \{1,\ldots,k\}$. Note that
\begin{multline}\label{eq:ckr}
\left\{\exists y\in B\left(x,\frac{\beta}{2^{k}}\right),\ j_\ell(x)\neq
j_\ell(y)\right\}\\\subseteq \bigcup_{i=1}^\infty
\bigcap_{j=1}^{i-1} \left\{r_\ell-\frac{\beta}{2^{k}}<d(x_i,x)\le
r_\ell+\frac{\beta}{2^{k}}\ \wedge\  d(x_j,x)>r_\ell+\frac{\beta}{2^{k}}\right\}.
\end{multline}
To prove~\eqref{eq:ckr}, assume that there is some $y\in B\left(x,\frac{\beta}{2^{k}}\right)$ for which $j_\ell(x)\neq j_\ell(y)$. Let $i\in \N$ be the first index such that $d(x_i,x)\le r_\ell+\frac{\beta}{2^{k}}$. Note that in order to prove that the event in the right hand side of~\eqref{eq:ckr} occurs, it suffices to show that the event
$$
\bigcap_{j=1}^{i-1} \left\{r_\ell-\frac{\beta}{2^{k}}<d(x_i,x)\le
r_\ell+\frac{\beta}{2^{k}}\ \wedge\  d(x_j,x)>r_\ell+\frac{\beta}{2^{k}}\right\}$$
occurs, which, by the minimality of $i$, is equivalent to showing that $d(x_i,x)> r_\ell-\frac{\beta}{2^{k}}$. So, assume for the sake of contradiction that
$d(x_i,x)\le r_\ell-\frac{\beta}{2^{k}}$. This implies in particular that $j_\ell(x)=i$, and moreover, since $y\in
B\left(x,\frac{\beta}{2^{k}}\right)$, we have $d(x_i,y)\le r_\ell$, implying that $j_\ell(y)\le i$. But, $d\left(x,x_{j_\ell(y)}\right)\le d\left(y,x_{j_\ell(y)}\right)+d(x,y)\le r_\ell+\frac{\beta}{2^{k}}$, and the minimality  of $i$ implies that $j_\ell(y)\ge i$. Thus $j_\ell(y)=i=j_\ell(x)$, contradicting our assumption on $y$.

Now, \eqref{eq:ckr} implies that
\begin{eqnarray}\label{eq:ratio1}
&&\!\!\!\!\!\!\!\!\!\!\!\!\!\!\!\Pr\left[ \exists y\in B\left(x,\frac{\beta}{2^{k}}\right),\ j_\ell(x)\neq j_\ell(y)\right]\nonumber\\&\le&
2^{\ell+2}\int_{2^{-\ell-2}}^{e^{-\ell-1}} \left(\mu\left(B\left(x,r+\frac{\beta}{2^{k}}\right)\right)-\mu\left(B\left(x,r-\frac{\beta}{2^{k}}\right)\right)\right)
\nonumber\\&\phantom{\le}&\quad\cdot\left(\sum_{i=1}^\infty \nonumber\left(1-\mu\left(B\left(x,r+\frac{\beta}{2^{k}}\right)\right)\right)^{i-1}\right)dr\nonumber\\
&=& 1-2^{\ell +2}\int_{\frac14 e^{-\ell b}}^{\frac12 e^{-\ell b}}
\frac{\mu\left(B\left(x,r-\frac{\beta}{2^{k}}\right)\right)}{
\mu\left(B\left(x,r+\frac{\beta}{2^{k}}\right)\right)}dr,
\end{eqnarray}
 Denote
$h(t)\stackrel{\mathrm{def}}{=} \log \mu\left(B(x,s)\right)$. Then by Jensen's
inequality we see that
\begin{multline}\label{eq:ratio split}
2^{\ell +2}\int_{2^{-\ell -2}}^{2^{-\ell -1}}
\frac{\mu\left(B\left(x,r-\frac{\beta}{2^{k}}\right)\right)}{\mu\left(B\left(x,r-\frac{\beta}{2^{k}}\right)\right)}dr= 2^{\ell +2}\int_{2^{-\ell
-2}}^{2^{-\ell -1}} e^{h\left(r-\frac{\beta}{2^{k}}\right)-h\left(r+\frac{\beta}{2^{k}}\right)}dr\\\ge
\exp\left(2^{\ell +2}\int_{2^{-\ell -2}}^{2^{-\ell-1}}\left[h\left(r-\frac{\beta}{2^{k}}\right)-h\left(r+\frac{\beta}{2^{k}}\right)\right]dr\right).
\end{multline}
The term in the exponent in~\eqref{eq:ratio split} can be estimated as follows:
\begin{multline}\label{eq:ratio split 2}
\int_{2^{-\ell -2}}^{2^{-\ell-1}}\left[h\left(r-\frac{\beta}{2^{k}}\right)-h\left(r+\frac{\beta}{2^{k}}\right)\right]dr=\int_{2^{-\ell -2}-\beta
2^{-k}}^{2^{-\ell -2}+\beta
2^{-k}}h\left(s\right)ds-\int_{2^{-\ell -1}-\beta
2^{-k}}^{2^{-\ell -1}+\beta
2^{-k}}h\left(s\right)ds\\ \ge \beta 2^{-k+1}\left[h\left(2^{-\ell
-2}-\beta e^{-k}\right)-h\left(2^{-\ell -1}+\beta
2^{-k}\right)\right].
\end{multline}
By recalling the definition of $h$, a combination of~\eqref{eq:ratio1}, \eqref{eq:ratio split}, \eqref{eq:ratio split 2} yields the bound,
\begin{equation}\label{eq:prob lower}
\Pr\left[ \exists y\in B\left(x,\frac{\beta}{2^{k}}\right),\ j_\ell(x)\neq j_\ell(y)\right]\ge 1- \left(\frac{\mu\left(B\left(x,2^{-\ell -2}-\beta
2^{-k}\right)\right)}{\mu\left(B\left(x,2^{-\ell -1}+\beta
2^{-k}\right)\right)}\right)^{\beta 2^{-(k-\ell)+3}}.
\end{equation}
Note that since $\ell\le k$ and $\beta\le \frac1{25}$ we know that
$2^{-\ell -1}+\beta
2^{-k}\le \left(1+\frac{1}{n}\right)^{n+1}\left(2^{-\ell -2}-\beta
2^{-k}\right)$. Hence, combining the assumption~\eqref{eq:def micro}
with~\eqref{eq:prob lower}, we see that
\begin{multline}\label{eq:ratio3}
\Pr\left[ \exists y\in B\left(x,\frac{\beta}{2^{k}}\right),\ j_\ell(x)\neq
j_\ell(y)\right] \le 1-K^{-(n+1)\beta
2^{-(k-\ell)+3}}\\\le (n+1)\beta
2^{-(k-\ell)+3}\log K\stackrel{\eqref{eq:def beta}}{\le} 2^{-(k-\ell)}.
\end{multline}
Plugging~\eqref{eq:ratio3} into~\eqref{eq:union} we see that
$$
\Pr\left[B\left(x,\frac{1}{16n\log K}\cdot 2^{-k}\right)\subseteq \P_k(x)\right]=\Pr\left[B\left(x,\frac{\beta}{2^{k}}\right)\subseteq \P_k(x)\right]\ge
1-\sum_{\ell=1}^k 2^{-(k-\ell)}\ge \frac12.
$$
This is precisely the statement that the partition tree
$\{\P_k\}_{k=0}^\infty$ is $\frac{1}{16n\log K}$-padded.
\end{proof}

The connection between the existence of padded random partition
trees and the Hardy-Littlewood maximal inequality is established in
the proof of Theorem~\ref{local}.


\begin{proof}[Proof of Theorem~\ref{local}] By a standard monotone convergence argument we may assume that $R$ is bounded, say
$R\subseteq [0,D]$ for some $D>1$. Fix $f\in L_p(X)$. By homogeneity
it suffices to show that
$$
\mu\left(M_Rf>1\right)\lesssim C^p\left(\left(1+\frac{\log\log K}{1+\log n}\right)Q^p+K^p\right)\int_X|f|^pd\mu,
$$
where $C>0$ is a universal constant and
\begin{equation}\label{eq:def Q}
Q\stackrel{\mathrm{def}}{=}\sup_{r>0}
\left\|M_{R\cap [r,nr]}\right\|_{L_p(X)\to L_{p,\infty}(X)}.
\end{equation}
By monotone convergence we may assume that $f$ (and hence also
$M_Rf$) has bounded support. We would like to apply Theorem
\ref{doob2}, but unfortunately there are no obvious candidates for
$\F_k$ with which we have either \eqref{mnf} or \eqref{emf}.
Nevertheless, we shall be able to proceed by replacing $M_R$ with a
slightly modified variant.

Let $E$ be the support of $f$ and denote $$E'\stackrel{\mathrm{def}}{=} \{x\in X:\
d(x,E)\le D\},$$ and $$E''\stackrel{\mathrm{def}}{=} \{x\in X:\ d(x,E)\le 2D\}.$$ Then
$E\subseteq E'\subseteq E''$ and $\diam(E'')\le 4D+\diam(E)<\infty$.
Moreover the support of $M_Rf$ is contained in $E'$. It will
therefore suffice to prove that
$$
\left\|M_R\right\|_{L_p(E')\to L_{p,\infty}(E'')}\lesssim \left(1+\frac{\log\log K}{1+\log n}\right)Q+K.
$$
By rescaling the metric we may assume that $\diam(E'')=1$. Once this
is achieved we may also assume that $R\subseteq (0,1]$, since the
operator $M_{R\cap (1,\infty)}$, viewed as an operator on $L_p(E')$,
is pointwise bounded by the averaging operator on $E'$.

Using Lemma~\ref{lem:decomp}, let $\left\{\P_k\right\}_{k=0}^\infty$
be a random partition tree on $E''$ which is $\beta$-padded, where
$$
\beta=\frac{1}{16n\log K}.
$$
Let $m$ be the largest integer such that $2^{-m}\le \beta$. Denote
for $k\ge 0$ and $i\in \{1,2,3\}$,
$$
R^i_k\stackrel{\mathrm{def}}{=} R\cap
\left[2^{-(3k+i)m},2^{-(3k-1+i)m}\right]\quad \mathrm{and}\quad
R^i\stackrel{\mathrm{def}}{=} \bigcup_{k\in \N\cup \{0\}} R^i_k.
$$
Thus $R=R^1\cup R^2\cup R^3$, which implies that
\begin{multline}\label{eq:sparsify}
\mu\left(M_Rf>1\right)=\mu\left(\max\left\{
M_{R^1}f,M_{R^2}f,M_{R^3}f\right\}>1\right)\\\le \mu\left(M_{R^1}f>1\right)+\mu\left(M_{R^2}f>1\right)+
\mu\left(M_{R^3}f>1\right).
\end{multline}


Fix $i\in \{1,2,3\}$ and $k\in \N\cup \{0\}$, and define
$$
E_k^i\stackrel{\mathrm{def}}{=} \left\{x\in E':\ M_{R_k^i}f(x)>1\right\}\setminus
\bigcup _{j=0}^{k-1} \left\{x\in E':\ M_{R^i_j}f(x)>1\right\}.
$$
Then the sets $E_k^i$ are disjoint
 and
 \begin{eqnarray}\label{eq:refine}
\mu\left( M_{R^i}f>1\right)=\mu\left( \sup_{k\in
\N\cup\{0\}}M_{R^i_k}f>1\right)=\sum_{k=0}^\infty
\mu\left(E^i_k\right).
 \end{eqnarray}
Recalling~\eqref{eq:def pad}, we denote
$$
\widetilde E_k^i\stackrel{\mathrm{def}}{=}
(E_k^i)_\beta^{\pad((3k+i+1)m)}=\left\{x\in E^i_k:\ B\left(x,\frac{\beta}{
2^{(3k+i+1)m}}\right)\subseteq \P_{(3k+i+1)m}(x)\right\}.
$$
Then by~\eqref{eq:expected} we know that
\begin{eqnarray}\label{eq:expected2}
\E\left[\mu\left(\widetilde E_k^i\right)\right]\ge \frac{ \mu\left(
E_k^i\right)}{2}.
\end{eqnarray}
Plugging~\eqref{eq:expected2} into~\eqref{eq:refine} we see that
\begin{eqnarray}\label{eq:bound by expectation}
\mu\left(M_{R^i}f>1\right)\le 2\E \left[\sum_{k=0}^\infty
\mu\left(\widetilde E^i_k\right)\right]=2\E \left[\mu\left(
\sup_{k\in \N\cup\{0\}} \widetilde M_{R^i_k}f>1\right)\right],
\end{eqnarray}
where $\widetilde M_{R^i_k}$ is the sublinear operator
$$
\widetilde M_{R^i_k}g\stackrel{\mathrm{def}}{=} \1_{\widetilde E_k^{i}}M_{R^i_k}g.
$$

Write $r=2^{-(3k+i)m}$ and let $v\asymp 1+\frac{\log \log K}{1+\log n}$ be an integer such that $2^{m/v}\le n$. By the definition of $Q$, for every $g\in L_p(E')$ and $t>0$ we have
\begin{multline*}
\mu\left(\widetilde M_{R^i_k}g>t\right)\le \mu\left(M_{R^i_k}g>t\right)= \mu\left(M_{R\cap [r,2^mr]}g>t\right)\\
\le \sum_{u=0}^{v-1} \mu\left(M_{R\cap \left[r2^{\frac{um}{v}},nr2^{\frac{um}{v}}\right]}g>t\right)
\le vQ^p\frac{\|g\|_{L_p(E')}^p}{t^p}.
\end{multline*}
Thus,
\begin{eqnarray}\label{eq:first condition}
 g\in L_p(E')\implies \left\| \widetilde M_{R_k^i} g
\right\|_{L_{p,\infty}(E')} \leq v^{1/p}Q \| g\|_{L_p(E')}.
\end{eqnarray}

For every $k\in \N\cup\{0\}$ we let $\F_k\stackrel{\mathrm{def}}{=}
\sigma (\P_k)$ be the $\sigma$-algebra generated by the partition
$\P_k$. Then $\F_0\subseteq \F_1\subseteq\F_2\subseteq  \cdots$. We
claim that for every $k\in \N\cup\{0\}$, if $F\in \F_{(3k+i+1)m}$
then
\begin{eqnarray}\label{eq:goal local}
\1_{F}\widetilde M_{R_{k+1}^i}(g) =\widetilde
M_{R_{k+1}^i}(\1_{F}g).
\end{eqnarray}
By the definition of $\widetilde M_{R_k^i}$, in order to
prove~\eqref{eq:goal local} we have to show that for almost every
$x\in E'$ we have
\begin{eqnarray}\label{eq:must show identity}
\1_{F}(x)\cdot\1_{\widetilde E_{k+1}^{i}}(x)\cdot
M_{R_{k+1}^i}(g)(x) =\1_{\widetilde E_{k+1}^{i}}(x)\cdot
M_{R_{k+1}^i}(\1_{F}g)(x).
\end{eqnarray}
It is non-trivial to check~\eqref{eq:must show identity} only  when
$x\in \widetilde E_{k+1}^{i}$, in which case we are guaranteed that
$ B\left(x,\beta 2^{-(3k+i+1)m}\right)\subseteq \P_{(3k+i+1)m}(x)$.
But since $F\in \F_{(3k+i+1)m}$, we know that $P_{(3k+i+1)m}(x)$ is
either disjoint from $F$ or contained in $F$. If
$P_{(3k+i+1)m}(x)\subseteq F$, then for every $r\in R_{k+1}^i$,
\begin{eqnarray}\label{eq:implying}
B(x,r)\subseteq B\left(x,2^{-(3k+i+2)m}\right)\subseteq
B\left(x,\beta 2^{-(3k+i+1)m}\right)\subseteq
\P_{(3k+i+1)m}(x)\subseteq F,
\end{eqnarray}
where we used the fact that $r\le 2^{-(3(k+1)-1+i)m}$ and $2^{-m}\le \beta$. The
inclusion~\eqref{eq:implying} implies that both sides of the
equation~\eqref{eq:must show identity} are equal to
$M_{R_{k+1}^i}(g)(x)$. On the other hand, if $P_{(3k+i+1)m}(x)$ is
disjoint from $F$, then $B(x,r)$ is disjoint from $F$ for all $r\in
R_{k+1}^i$, implying that both sides of the equation~\eqref{eq:must
show identity} vanish. This concludes the proof of~\eqref{eq:goal
local}.

Fix $g\in L_\infty(E')$, and extend $g$ to a function on $X$ whose value
is $0$ outside $E'$. Assume that $$\left\|\E\left(|g|\big|
\F_{(3k+i+1)m}\right)\right\|_{L_\infty(E')}=1.$$ This implies that
for all $F\in \F_{(3k+i+1)m}$ we have
\begin{eqnarray}\label{eq:qeak bound}
\int_{F}|g|d\mu=\int_{F\cap E'}|g|d\mu\le \mu\left(F\cap
E'\right)\le \mu(F).
\end{eqnarray}
Fix $r\in R_k^i$ and $x\in E'$. Denote $$
F\stackrel{\mathrm{def}}{=} \bigcup\left\{C\in \P_{(3k+i+1)m}:\
C\cap B(x,r)\neq \emptyset\right\}\in \F_{(3k+i+1)m}.$$ Note that
$B(x,r)\subseteq E''$, which implies that
\begin{equation}\label{eq:F contains ball}
F\supseteq B(x,r). \end{equation}
Moreover,
\begin{eqnarray}\label{eq:inc}
F\subseteq B\left(x,r+\sup_{C\in
\P_{(3k+i+1)m}}\diam(C)\right)\subseteq
B\left(x,r+2^{-(3k+i+1)m}\right)\subseteq
B\left(x,\left(1+2^{-m}\right)r\right),
\end{eqnarray}
where in the last inclusion in~\eqref{eq:inc} we used the fact that
$r\in R_k^i$ implies that $r\ge 2^{-(3k+i)m}$. Hence,
\begin{multline}\label{eq:second condition}
\frac{1}{\mu(B(x,r))}\int_{B(x,r)} |g|d\mu\stackrel{\eqref{eq:F
contains ball}}{\le} \frac{1}{\mu(B(x,r))}\int_{F}
|g|d\mu\stackrel{\eqref{eq:qeak bound}}{\le}
\frac{\mu(F)}{\mu(B(x,r))}\\\stackrel{\eqref{eq:inc}}{\le}
\frac{\mu\left(B\left(x,\left(1+2^{-m}\right)r\right)\right)}{\mu(B(x,r))}\le
\frac{\mu\left(B\left(x,\left(1+\frac{1}{n}\right)r\right)\right)}{\mu(B(x,r))}
\stackrel{\eqref{eq:def micro}}{\le} K,
\end{multline}

We are now in position to apply Theorem~\ref{doob2} to the increasing sequence of $\sigma$-algebras $\left\{\F_{(3k+i+1)m}\right\}_{k=0}^\infty$ and the sublinear operators $\left\{M_{R^i_k}\right\}_{k=0}^\infty$, with $A=v^{1/p}Q$, due to~\eqref{eq:first condition}, and $B=K$, due to~\eqref{eq:second condition}:
\begin{multline*}\
\mu\left( \sup_{k\in \N\cup\{0\}} \widetilde M_{R^i_k}f>1\right)\le \left(2^pvQ^p+2^pK^p\right)\int_X|f|^pd\mu\\\lesssim \left(2^p\left(1+\frac{\log\log K}{1+\log n}\right)Q^p+2^pK^p\right)\int_X|f|^pd\mu.
\end{multline*}
Using~\eqref{eq:bound by expectation} and~\eqref{eq:sparsify}, we therefore deduce that
$$
\left[\mu\left(M_Rf>1\right)\right]^{1/p}\lesssim \left(\left(1+\frac{\log\log K}{1+\log n}\right)^{1/p}Q+K\right)\|f\|_{L_p(X)},
$$
as required.
\end{proof}

\section{An argument of E. Lindenstrauss}\label{lin-sec}

We now present an alternative approach to Corollary~\ref{coro:our norm bounds},
following an argument of E. Lindenstrauss~\cite{linden}. Let us first make some definitions. We fix a metric measure space
$(X,d,\mu)$. Given any two radii $r,r' > 0$ and a center $x \in X$,
we define the enlarged ball $B(x,r,r')$ by
$$ B(x,r,r') \stackrel{\mathrm{def}}{=} \bigcup_{y \in B(x,r)} B(y,r') =
\{ z \in X: d(x,y) \le r\ \wedge\  d(y,z) \le r' \hbox{ for some } y \in
X \}.$$ Thus, for instance,
\begin{equation}\label{eq:inclusion extended ball}
B(x,r) \subseteq B(x,r,r') \subseteq
B(x,r+r').
\end{equation}
In analogy to~\cite{linden}, we say that a finite
sequence of radii $0 < r_1 < r_2 < \cdots < r_k$ is \emph{tempered}
with constant $K \geq 1$ if we have the bound
\begin{multline}\label{eq:def tempered}
 \forall\ j\in \{1,\ldots,k\}\ \forall x\in X\ \forall y \in B(x,r_j),\\\mu\left(B(x,r_j) \bigcup \left( \bigcup_{i =1}^{j-1} B\left(x,r_j,r_i\right) \right)\right) \leq K \mu\left( B\left(y,r_j\right) \right).
 \end{multline}


\begin{theorem}[Lindenstrauss maximal inequality]\label{lmi}  Let $(X,d,\mu)$ be a metric measure space, and let $0 < r_1 < r_2 < \ldots < r_k$ be a sequence of radii which is tempered with constant $K$.  Then we have the weak $(1,1)$ maximal inequality
$$ \mu\left( x \in X: \max_{1 \leq j \leq k} \frac{1}{B(x,r_j)} \int_{B(x,r_j)} |f|\ d\mu > \lambda
\right) \leq \frac{2e}{e-1} \frac{K}{\lambda} \|f\|_{L_1(X)}$$ for
all $f \in L_1(X)$ and $\lambda > 0$.
\end{theorem}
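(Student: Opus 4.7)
The plan is to adapt Lindenstrauss's randomized Vitali-covering argument \cite{linden}. Set $A_rg(x)=\frac{1}{\mu(B(x,r))}\int_{B(x,r)}g\,d\mu$ and $E_\lambda=\{x\in X:\max_{1\le j\le k}A_{r_j}|f|(x)>\lambda\}$. For each $x\in E_\lambda$ pick an index $j(x)\in\{1,\dots,k\}$ (say the largest) such that
$$\int_{B(x,r_{j(x)})}|f|\,d\mu>\lambda\,\mu(B(x,r_{j(x)})).$$
The entire task is reduced to producing a random subset $S\subseteq E_\lambda$ with the two properties
\begin{itemize}
\item (Covering) $\mathbb E\bigl[\sum_{x\in S}\mu(B(x,r_{j(x)}))\bigr]\ge \frac{e-1}{2e}\,\mu(E_\lambda)$,
\item (Near-disjointness) $\mathbb E\bigl[\sum_{x\in S}\int_{B(x,r_{j(x)})}|f|\,d\mu\bigr]\le K\,\|f\|_{L_1(X)}$.
\end{itemize}
Dividing the defining inequality by $\lambda$ and summing over $x\in S$ then combines these two bounds into $\mu(E_\lambda)\le \frac{2e}{e-1}\cdot\frac{K}{\lambda}\|f\|_{L_1(X)}$.

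The set $S$ will be constructed scale by scale, processing $j$ from $k$ down to $1$. At step $j$ I isolate the ``active'' set $A_j\subseteq\{x\in E_\lambda:j(x)=j\}$ of points not already covered by some previously selected ball of strictly larger radius, and then perform an independent Bernoulli thinning with probability $p$ (for a small absolute constant $p$ to be chosen), declaring the surviving points to be in $S$ at scale $j$. The role of the tempered hypothesis (\ref{eq:def tempered}) is precisely to control the cross-scale overlap: when a ball $B(x,r_j)$ is chosen, the possible centers of previously-selected larger-scale balls $B(y,r_{j'})$ (with $j'>j$) whose interiors meet $B(x,r_j)$ lie in the thickening $B(y,r_{j'},r_j)$ of that earlier ball, so summing all such thickenings against the corresponding $B(y,r_{j'})$ at the ``outer'' step $j'$ produces exactly the combination bounded in (\ref{eq:def tempered}) by $K\mu(B(z,r_{j'}))$ for $z\in B(y,r_{j'})$.

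With this selection the covering count is the easier of the two: for each fixed $x\in E_\lambda$, a standard conditional argument shows that $x$ has probability at least $p(1-p)^{\#\{\text{scales above }j(x)\}}$-type survival, and a summation/averaging gives a survival probability bounded below by a constant of the order $\frac{e-1}{e}$ when $p$ is chosen appropriately, yielding the covering bound with constant $\frac{e-1}{2e}$ (the extra factor of $\tfrac12$ comes from a standard ``doubling halving'' to pass from expected mass of union to expected sum). For the near-disjointness step, one groups contributions by the \emph{smallest}-scale selected ball covering a given point and uses (\ref{eq:def tempered}) together with the sub-probability $p$ to show that the expected $L_1$-overlap of the collection $\{B(x,r_{j(x)})\}_{x\in S}$ is at most $K$; integrating $|f|$ then gives the second display above.

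The principal obstacle is balancing these two counts: the thinning parameter $p$ must be small enough that the overlap of chosen balls across many scales remains bounded by $K$ rather than accumulating a factor depending on $k$, yet large enough that a positive fraction of $E_\lambda$ is actually covered in expectation. The tempered condition is designed to make this trade-off possible with an absolute choice of $p$ and overlap constant exactly $K$; once $p$ is chosen, everything else (measurability of the Bernoulli process, the passage to a countable subset of $X$ via separability, and monotone convergence in $\lambda$ and in the finite supremum) is routine.
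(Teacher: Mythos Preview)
Your proposal shares the right high-level idea with the paper (and with Lindenstrauss's original argument)---randomize the Vitali selection---but the mechanism you describe does not work as stated. The problem is the ``Bernoulli thinning with probability $p$'': in a general metric measure space the active set $A_j$ is uncountable, so independent Bernoulli selection from $A_j$ is not even defined; and even after a discretization, the expected number of selected centers $x\in A_j$ whose ball $B(x,r_j)$ contains a given point $y$ equals $p$ times the cardinality (or measure) of $A_j\cap B(y,r_j)$, which is completely unbounded. So the near-disjointness bound $\mathbb{E}\bigl[\sum_{x\in S}\int_{B(x,r_{j(x)})}|f|\,d\mu\bigr]\le K\|f\|_{L_1}$ cannot follow from a constant thinning probability. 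Your paragraph invoking the tempered condition does not rescue this: that hypothesis bounds the \emph{measure} of the extended ball $B(x,r_j)\cup\bigcup_{i<j}B(x,r_j,r_i)$, not the number of potential centers lying in $B(y,r_j)$. This is exactly the tension you flag as ``the principal obstacle,'' and with a constant $p$ it simply cannot be resolved.

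What makes the actual argument go through is replacing the constant $p$ by a \emph{spatially varying} intensity. In the paper's proof one inducts on $k$; at the top scale one throws a Poisson process $\Sigma$ on (a compact subset of) $E_k$ with intensity $p(x)=\inf_{y\in B(x,r_k)}1/\mu(B^*(y))$, where $B^*(y)=B(y,r_k)\cup\bigcup_{i<k}B(y,r_k,r_i)$ is the extended ball appearing in the tempered condition. This normalization forces the expected number $\alpha(y)$ of selected centers in any $B(y,r_k)$ to satisfy $\alpha(y)\le 1$, so that $\Pr[y\in F]=1-e^{-\alpha(y)}\ge(1-e^{-1})\alpha(y)$; the tempered condition enters to give $p(x)\ge 1/(K\mu(B(x,r_k)))$, which is exactly what converts the covering integral into the factor $K$ of the theorem. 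Smaller scales are then handled by applying the induction hypothesis to $f\1_{X\setminus F}$, since on $X\setminus F$ the smaller-scale averages are unchanged. The constant $\tfrac{2e}{e-1}$ emerges from the factor $(1-e^{-1})^{-1}$ together with a factor of $2$ in the induction step. Your sketch has the right shape but is missing precisely this adaptive choice of intensity and the inductive peeling of one scale at a time.
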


\begin{proof}[Proof of Corollary~\ref{coro:our norm bounds} assuming Theorem \ref{lmi}]
Assume that $(X,d,\mu)$ obeys the strong microdoubling condition~\eqref{eq:def strong micro}. It is immediate to
check that any sequence $0 < r_1 < r_2 < \ldots < r_k$ obeying the
lacunarity condition $r_j \geq n r_{j-1}$ will be tempered with
constant $K$, and hence by Theorem~\ref{lmi},
$$ \mu\left(  x \in X: \max_{1 \leq j \leq k} \frac{1}{B(x,r_j)} \int_{B(x,r_j)} |f|\ d\mu > \lambda
\right ) \le \frac{2e}{e-1}\frac{K}{\lambda} \|f\|_{L_1(X)}.$$ If
instead we have the lacunarity condition $r_j \geq 2 r_{j-1}$, then
we can sparsify this sequence into $O(\log n)$ subsequences obeying
the prior lacunarity condition, and hence, by subadditivity,
$$ \mu\left( x \in X: \max_{1 \leq j \leq k} \frac{1}{B(x,r_j)} \int_{B(x,r_j)} |f|\ d\mu > \lambda
\right ) \lesssim \frac{K \log n}{\lambda} \|f\|_{L_1(X)}.$$ From
monotone convergence we then conclude~\eqref{eq:our lacunary}.  Similarly,
any sequence obeying the lacunarity condition $r_j \geq (1 +
\frac{1}{n}) r_{j-1}$ can be sparsified into $O( n \log n )$
sequences which have a lacunarity ratio of $n$.  By monotone
convergence this implies that
$$ \mu\left( x \in X: \sup_{r \in (1 + \frac{1}{n})^\Z} \frac{1}{B(x,r)} \int_{B(x,r)} |f|\ d\mu > \lambda
\right) \lesssim \frac{K n \log n}{\lambda} \|f\|_{L_1(X)},$$ where
$(1 + \frac{1}{n})^\Z$ denotes the integer powers of $1 +
\frac{1}{n}$. Now note from \eqref{eq:def strong micro} that every ball is
contained in a ball whose radius is an integer power of $1 +
\frac{1}{n}$, and whose measure is at most $K$ times larger.  Thus
$$ M f(x) \leq K \sup_{r \in (1 + \frac{1}{n})^\Z} \frac{1}{B(x,r)} \int_{B(x,r)} |f|\ d\mu, $$
and~\eqref{eq:our SS} follows.
\end{proof}

\begin{proof}[Proof of Theorem~\ref{lmi}]
As in \cite{linden}, this is achieved by a randomized variant of the
Vitali covering argument. We may take $f$ to be non-negative, and
normalize $\lambda = 1$. For each $j\in \{1,\ldots,k\}$, let $E_j$ be a
compact subset of $X$ on which we have
\begin{equation}\label{brj}
x \in E_j\implies \frac{1}{B(x,r_j)} \int_{B(x,r_j)} f\ d\mu > 1.
\end{equation}
By inner regularity it will suffice to show that
\begin{equation}\label{eq:for induction elon}
\mu\left( \bigcup_{j=1}^k E_j \right) \leq \frac{2e}{e-1} K \int_X f\ d\mu.
\end{equation}
We establish~\eqref{eq:for induction elon} by induction on $k$.  The case $k=0$ is vacuously
true, so suppose $k \geq 1$ and the claim has already been proven
for $k-1$ (i.e, that~\eqref{eq:for induction elon} holds true for all non-negative $f\in L_1(X)$ and all sets $\{E_j\}_{j=1}^{k-1}$ satisfying~\eqref{brj}).

By compactness, we see that there exists an $\eps > 0$
such that
$$x\in E_k\implies \mu( B(x,r_k) ) > \eps.$$
We then define the extended ball
$$ B^*(x) \stackrel{\mathrm{def}}{=} B(x,r_k) \bigcup \left(\bigcup_{j =1}^{k-1} B(x,r_k,r_j)\right).$$
Thus, since the sequence of radii $\{r_j\}_{j=1}^k$ is tempered, for all $y \in B(x,r_k)$,
\begin{equation}\label{bdouble}
 \eps < \mu\left( B^*(y) \right) \leq K \mu( B(x,r_k) ).
 \end{equation}
 If we then define the \emph{intensity
function}
$$ p(x) \stackrel{\mathrm{def}}{=} \inf_{y\in B(x,r_k)}\frac{1}{\mu( B^*(y) )},$$
then $p$ is a measurable function on $E_k$ which is bounded both
above and below:
\begin{equation}\label{eq:bounds for p}
\frac{1}{K\mu\left(\left(B(x,r_k\right)\right)}\le p(x)<\frac{1}{\e}.
\end{equation}

 We now introduce a Poisson process $\Sigma$ on
$E_k$ with intensity $p(x)$.  Thus $\Sigma$ is a random finite
subset\footnote{If $E_k$ contains atoms, then $\Sigma$ may contain
multiplicity, thus it is really a multiset rather than a set in this
case. One way to create $\Sigma$ is to let $N$ be a Poisson random
variable with expectation $P \stackrel{\mathrm{def}}{=} \int_{E_k} p d\mu$ and then
let $\Sigma = \{x_1,\ldots,x_N\}$ where $x_1,\ldots,x_N$ are iid
elements of $E$ chosen using the probability distribution $p
d\mu_Y/P$.} of $E_k$ which will be almost surely finite, and more
precisely, for any non-negative measurable weight $w: E_k \to \R_+$,
the quantity $\sum_{x \in \Sigma} w(x)$ is a Poisson random variable
with expectation
\begin{equation}\label{eq:def alpha w}
\alpha_w \stackrel{\mathrm{def}}{=}\E \left[\sum_{x \in \Sigma} w(x)\right] =
\int_{E_k} w p\ d\mu,
\end{equation}
i.e., for any integer $k \geq 0$ \begin{eqnarray}\label{expect}
\Pr\left( \sum_{x \in \Sigma} w(x) = k \right) = \frac{e^{-\alpha_w}
\alpha_w^k}{k!}.\end{eqnarray}

Now we define the random sets
$$ E' \stackrel{\mathrm{def}}{=} \bigcup_{x \in \Sigma} B^*(x)\quad \mathrm{and}\quad
F \stackrel{\mathrm{def}}{=} \bigcup_{x \in \Sigma} B(x,r_k).
$$
Then, \begin{eqnarray}\label{eq:three term} \mu\left(
\bigcup_{j=1}^k E_j \right) \leq \mu(E_k) + \mu( E' ) + \mu\left(
\bigcup_{j=1}^{k-1} E_j \setminus E' \right).\end{eqnarray} Let us
investigate the third term in~\eqref{eq:three term}.  Fix $j\in \{1,\ldots,k-1\}$. If $x \in E_j
\setminus E'$, then
$$ \frac{1}{B(x,r_j)} \int_{B(x,r_j)} f\ d\mu > 1.$$
But, since $x\notin E'$ it follows from our definitions that $B(x,r_j)$ is disjoint from $F$.  Thus we have
$$ \frac{1}{B(x,r_j)} \int_{B(x,r_j)} f \1_{X \setminus F} \ d\mu > 1.$$
We can therefore apply the induction hypothesis to the sets $\{E_j\setminus E'\}_{j=1}^{k-1}$ and the function $f\1_{X\setminus F}$, and conclude that
$$ \mu\left( \bigcup_{j=1}^{k-1} E_j \setminus E' \right) \leq \frac{2e}{e-1} K \int_{X \setminus F} f\ d\mu.$$
It follows from~\eqref{eq:three term} that it suffices to show that
\begin{equation}\label{eq:goal after expectation}
\mu(E_k) + \E \left[\mu(E')\right]\le \E \left[\mu(E_k) + \mu(E') \right]
\leq \frac{2e}{e-1} K\E\left[ \int_{F} f\ d\mu\right].
\end{equation}
Now, applying~\eqref{eq:def alpha w} and
\eqref{expect} with $w \stackrel{\mathrm{def}}{=} 1/p$, we have
$$ \mu(E_k) = \E \left[\sum_{x \in \Sigma} \frac{1}{p(x)}\right],$$
while from definition of $E'$ we have
$$ \mu(E') \leq \sum_{x \in \Sigma} \mu\left( B^*(x) \right) = \sum_{x \in \Sigma} \frac{1}{p(x)}.$$
Thus, in order to prove~\eqref{eq:goal after expectation} it suffices to show that
\begin{equation}\label{eq:new goal after expectation} \E \left[\sum_{x \in \Sigma} \frac{1}{p(x)}\right] \leq \frac{e}{e-1} K \E \left[\int_F f\ d\mu\right].\end{equation}

>From \eqref{brj} we know that for all $x \in \Sigma$,
$$ \frac{1}{p(x)}< \frac{1}{p(x) \mu(B(x,r_k))} \int_X \1_{B(x,r_k)} f\
d\mu,
$$
 and hence
\begin{equation}\label{eq:first step in new goal} \E \left[\sum_{x \in \Sigma} \frac{1}{p(x)}\right]
\leq \int_X \left(\E \left[\sum_{x \in \Sigma} \frac{1}{p(x)
\mu(B(x,r_k))} \1_{B(x,r_k)}\right]\right) f\ d\mu.\end{equation}
Fix $y\in X$. From~\eqref{eq:def alpha w} with $w(x)=\frac{\1_{B(x,r_k)}(y)}{p(x)\mu\left(B(x,r_k)\right)}$, we see that
\begin{equation}\label{eq:second step in new goal} \E \left[\sum_{x \in \Sigma} \frac{1}{p(x) \mu(B(x,r_k))}\1_{B(x,r_k)}(y)\right] = \int_{E_k \cap B(y,r_k)}
\frac{1}{\mu(B(x,r_k))} \ d\mu(x).\end{equation}
By substituting~\eqref{eq:second step in new goal} into~\eqref{eq:first step in new goal}, we see that in order to prove~\eqref{eq:new goal after expectation} it will suffice to prove
the pointwise estimate
\begin{equation}\label{eq:new new goal} \int_{E_k \cap B(y,r_k)} \frac{1}{\mu(B(x,r_k))} \ d\mu(x)
\leq \frac{eK}{e-1} \E\left[ \1_F(y)\right],
\end{equation} for all $y\in X$.

Now observe that the definition of $F$ implies that $\1_F(y) = 1$ if and only if $|\Sigma \cap
B(y,r_k)| \geq 1$.  But, recall from~\eqref{eq:def alpha w} (using $w(x)=\1_{B(y,r_k)}(x)$) that $|\Sigma \cap B(y,r_k)|$ is a Poisson random
variable with expectation
\begin{equation}\label{eq:def ay} \alpha(y) \stackrel{\mathrm{def}}{=} \int_{E_k} \1_{B(y,r_k)} p\ d\mu = \int_{E_k \cap B(y,r_k)} p(x)\ d\mu(x),\end{equation}
and thus
\begin{equation}\label{eq:expectation alphay} \E \left[\1_F(y)\right] = 1 - e^{-\alpha(y)}.\end{equation}
A combination of~\eqref{eq:bounds for p} and~\eqref{eq:def ay} yields the bound
\begin{equation}\label{eq:use tempered}
\int_{E_k \cap B(y,r_k)} \frac{1}{\mu(B(x,r_k))} \ d\mu(x) \leq K\alpha(y).\end{equation}
The definition of $p(x)$ implies that if $y\in B(x,r_k)$ then $p(x)\le \frac{1}{\mu(B^*(y))}\le \frac{1}{\mu(B(y,r_k))}$, since $B^*(y)\supseteq B(x,r_k)$. In combination with~\eqref{eq:def ay}, we deduce that $\alpha(y)\le 1$. But, the function $\alpha\mapsto \frac{1-e^{-\alpha}}{\alpha}$ is decreasing on $[0,\infty)$, and therefore $1-e^{-\alpha(y)}\ge (1-e^{-1})\alpha(y)$. This, in combination with~\eqref{eq:expectation alphay} and~\eqref{eq:use tempered}, implies~\eqref{eq:new new goal}, and completes the proof of Theorem~\ref{lmi}.
\end{proof}

As observed in \cite{linden}, the above argument allows us to
extract a good maximal inequality for sufficiently sparse
subsequences of radii if the situation is sufficiently ``amenable''.
In our current context, the analogue for amenability is in fact
subexponential growth:

\begin{corollary} Let $(X,d,\mu)$ be a metric measure space such that $\mu(B(x,r)$ is independent of $x\in X$ for all $r>0$.
Suppose also that we have the sub-exponential growth condition
\begin{equation}\label{subexp}
\lim_{r \to \infty} \frac{\log \mu(B(x,r))}{r} = 0
\end{equation}
for any $x \in X$ (note that our assumption implies that the choice of $x$
is in fact irrelevant).  Then there exists a sequence of radii $0 <
r_1 < r_2 < \ldots$ tending to infinity such that we have the
maximal inequality
$$ f\in L_1(X)\implies \left\| \sup_{k \geq 1} A_{r_k} |f| \right\|_{L_{1,\infty}(X)} \leq 4 \|f\|_{L_1(X)},$$
where the averaging operators $A_{r}$ are given by $A_{r}g\stackrel{\mathrm{def}}{=}
\frac{1}{B(x,r)} \int_{B(x,r)} |g|\ d\mu.$
\end{corollary}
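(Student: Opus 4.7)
The plan is to construct a sequence $r_1<r_2<\cdots$ that is tempered with a constant $K$ close enough to $1$ so that Theorem~\ref{lmi} yields the claimed bound $4$, and then pass to the limit by monotone convergence. Set
$$
K \stackrel{\mathrm{def}}{=} \frac{2(e-1)}{e},
$$
so that $\frac{2e}{e-1}K=4$. Write $V(r)\stackrel{\mathrm{def}}{=}\mu(B(x,r))$; by assumption, $V(r)$ is independent of $x$, and by \eqref{subexp} we have $\lim_{r\to\infty}\frac{\log V(r)}{r}=0$.

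First I would construct the sequence inductively. Pick any $r_1>0$. Given $r_1<\cdots<r_{j-1}$, choose $r_j$ so that $r_j\ge 2r_{j-1}$ (to ensure $r_j\to\infty$) and $V(r_j+r_{j-1})\le K\, V(r_j)$. The key observation — the main obstacle, really — is that such an $r_j$ exists. For if this fails, then there is some $R_0\ge 2r_{j-1}$ such that $V(r+r_{j-1})/V(r)>K$ for every $r\ge R_0$. Iterating yields $V(R_0+k r_{j-1}) > K^k\, V(R_0)$ for all $k\in\N$, so that
$$
\frac{\log V(R_0+kr_{j-1})}{R_0+kr_{j-1}} \;>\; \frac{k\log K + \log V(R_0)}{R_0+kr_{j-1}} \;\xrightarrow[k\to\infty]{}\; \frac{\log K}{r_{j-1}}\;>\;0,
$$
contradicting the subexponential growth hypothesis \eqref{subexp}. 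Hence the induction proceeds and produces $r_1<r_2<\cdots$ tending to $\infty$.

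Next I would verify that for every $k\ge 1$, the finite sequence $0<r_1<\cdots<r_k$ is tempered with constant $K$ in the sense of \eqref{eq:def tempered}. Fix $j\in\{1,\ldots,k\}$, $x\in X$, and $y\in B(x,r_j)$. Using \eqref{eq:inclusion extended ball} and the monotonicity $r_i\le r_{j-1}$ for $i<j$,
$$
B(x,r_j)\cup\bigcup_{i=1}^{j-1} B(x,r_j,r_i)\;\subseteq\;\bigcup_{i=1}^{j-1}B(x,r_j+r_i)\cup B(x,r_j)\;\subseteq\; B(x,r_j+r_{j-1}).
$$
Taking $\mu$-measure and using that $V$ is uniform together with the defining property of $r_j$,
$$
\mu\!\left(B(x,r_j)\cup\bigcup_{i=1}^{j-1} B(x,r_j,r_i)\right)\;\le\; V(r_j+r_{j-1})\;\le\; K\,V(r_j)\;=\;K\,\mu(B(y,r_j)),
$$
which is exactly \eqref{eq:def tempered}.

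Finally, applying Theorem~\ref{lmi} to the tempered sequence $r_1,\ldots,r_k$ gives, for every $k\ge 1$, every $f\in L_1(X)$, and every $\lambda>0$,
$$
\mu\!\left(\max_{1\le j\le k} A_{r_j}|f|>\lambda\right)\;\le\;\frac{2e}{e-1}\cdot\frac{K}{\lambda}\,\|f\|_{L_1(X)}\;=\;\frac{4}{\lambda}\,\|f\|_{L_1(X)}.
$$
Letting $k\to\infty$ and invoking monotone convergence for the increasing sequence of sets $\{\max_{1\le j\le k}A_{r_j}|f|>\lambda\}$ yields
$$
\mu\!\left(\sup_{k\ge 1} A_{r_k}|f|>\lambda\right)\;\le\;\frac{4}{\lambda}\,\|f\|_{L_1(X)},
$$
which is the desired weak $(1,1)$ inequality with constant $4$.
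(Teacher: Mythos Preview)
Your proof is correct and follows essentially the same approach as the paper: construct the sequence inductively so that $V(r_j+r_{j-1})\le K\,V(r_j)$ for a constant $K>1$ with $\frac{2e}{e-1}K\le 4$, verify that this makes the sequence tempered, and then apply Theorem~\ref{lmi} together with monotone convergence. The only cosmetic differences are your choice $K=\tfrac{2(e-1)}{e}$ versus the paper's $K=e^{0.001}$, and your condition $r_j\ge 2r_{j-1}$ versus the paper's $r_{k+1}>\max\{r_k,k\}$ to force $r_k\to\infty$.
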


\begin{proof}  We construct the radii recursively as follows.  We set $r_1 \stackrel{\mathrm{def}}{=} 1$.
If $r_1,\ldots,r_k$ have already been chosen, we choose $r_{k+1} >
\max\left\{r_k,k\right\}$ so that
$$ \log \mu\left( B\left(x, r_{k+1} + r_k \right) \right) \leq \mu\left(B\left(x, r_{k+1} \right)\right) + 0.001$$
for any $x \in X$. Such a radius must exist, since otherwise one
would easily contradict~\eqref{subexp}.  The sequence of radii is
tempered with constant $K = e^{0.001}$, and the claim follows since
$\frac{2K}{1-e^{-1}} < 4$.
\end{proof}

\section{The infinite tree}\label{nevo-sec}
 Fix $k\ge 2$ and let $T$ be the infinite rooted $k$-ary tree with
 the usual graph metric and the counting measure $\mu$.
In this section we prove Theorem~\ref{free}.  The first (standard)
step is to replace the Hardy-Littlewood maximal function with the
spherical maximal function
$$ M^\circ f(x) \stackrel{\mathrm{def}}{=} \sup_{r \geq 0} \frac{1}{|S(x,r)|} \sum_{y \in S(x,r)} |f(y)|,$$
where  $S(x,r)$ is the sphere
$$ S(x,r) \stackrel{\mathrm{def}}{=} \{ y \in T: d(x,y) = r \}.$$
Since every ball can be written as the disjoint union of spheres, we
have the pointwise estimate
$$ Mf(x) \leq M^\circ f(x),$$
and so it suffices to show that
\begin{equation}\label{treeweak}
\mu\left( x \in T: M^\circ f(x) \geq \lambda \right) \lesssim
\frac{1}{\lambda} \| f \|_{L_1(T)},
\end{equation}
for all $f \in L_1(T)$ and $\lambda > 0$.


Our arguments rely on the following expander-type estimate.  We use
$|E| = \mu(E)$ to denote the cardinality of a finite set $E\subseteq
T$.

\begin{lemma}\label{lxy}  Let $E, F$ be finite subsets of $T$ and let $r \geq 0$ be an integer.  Then
$$ |\{ (x,y) \in E \times F:\  d(x,y) = r \}| \le 2|E|^{1/2} |F|^{1/2} k^{r/2}.$$
\end{lemma}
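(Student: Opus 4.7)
The plan is to run a Cauchy--Schwarz argument organized by the lowest common ancestor (LCA) of the pair. In the tree $T$, every ordered pair $(x,y)$ at distance $r$ has a unique LCA $w=\mathrm{LCA}(x,y)$ together with the splitting $a=d(x,w)$, $b=d(y,w)=r-a$. I will decompose the count as $\sum_{a=0}^{r} N_a$, where $N_a$ denotes the number of pairs with $d(x,\mathrm{LCA})=a$, and bound each $N_a$ in turn.

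For the boundary cases $a\in\{0,r\}$, one of $x,y$ is an ancestor of the other in the tree. Since each $y\in F$ has at most one ancestor at distance $r$, while each $x\in E$ has exactly $k^r$ descendants at distance $r$, I have $N_0\le \min(|F|,\,k^r|E|)$, and the AM--GM inequality yields $N_0\le k^{r/2}|E|^{1/2}|F|^{1/2}$; the symmetric argument bounds $N_r$ identically. Together these contribute the factor $2$ in the stated bound.

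For the interior cases $1\le a\le r-1$, the points $x,y$ lie in distinct subtrees of $w$ at depths $a$ and $r-a$. I introduce the counts $u_a(w)=|\{x\in E:\ x\text{ is a descendant of }w,\ d(x,w)=a\}|$ and $v_b(w)$ analogously for $F$. Then $N_a\le \sum_{w\in T} u_a(w)\,v_{r-a}(w)$, and Cauchy--Schwarz in $w$ gives $N_a\le \bigl(\sum_w u_a(w)^2\bigr)^{1/2}\bigl(\sum_w v_{r-a}(w)^2\bigr)^{1/2}$. The two elementary inputs are $u_a(w)\le k^a$ (the descendant sphere of $w$ at depth $a$ has at most $k^a$ vertices) and $\sum_w u_a(w)\le |E|$ (each $x\in E$ has at most one $a$-th ancestor), which combine to give $\sum_w u_a(w)^2\le k^a|E|$ and, by the same logic, $\sum_w v_{r-a}(w)^2\le k^{r-a}|F|$. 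Hence each $N_a\le k^{r/2}|E|^{1/2}|F|^{1/2}$.

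The main obstacle is passing from the per-$a$ bound to the claimed constant $2$, since naively summing over $a\in\{0,\dots,r\}$ produces a factor of $r+1$ rather than a constant. I expect the resolution to exploit the fact that $a$ is entirely determined by the depth pair $(d_x,d_y)$ via $a=(r+d_x-d_y)/2$, so the interior contributions can be re-summed as a single Cauchy--Schwarz indexed by this depth pair rather than one per value of $a$; alternatively, a weighted version of the argument with weights balancing $k^a$ against $k^{r-a}$ should handle all interior $a$'s simultaneously. Either refinement folds the interior piece into a constant-times-boundary estimate, yielding the claimed bound.
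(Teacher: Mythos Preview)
Your argument has a genuine gap that your proposed fixes do not close. The per-$a$ Cauchy--Schwarz bounds are correct, but summing them yields $(r+1)k^{r/2}|E|^{1/2}|F|^{1/2}$, and neither of your two suggestions recovers the constant $2$. Re-indexing by depth pair $(d_x,d_y)$ and applying Cauchy--Schwarz at each pair gives $N_{d_x,d_y}\le k^{r/2}|E_{d_x}|^{1/2}|F_{d_y}|^{1/2}$; summing this over the admissible $(d_x,d_y)$ still produces a factor growing with $r$ (for fixed $d_x$ there are up to $r+1$ choices of $d_y$, and $\sum_j |E_j|^{1/2}$ can be much larger than $|E|^{1/2}$). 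A weighted Cauchy--Schwarz over $(a,w)$ with weights $\lambda_a$ balancing $k^a$ against $k^{r-a}$ likewise gives back $(r+1)k^{r/2}|E|^{1/2}|F|^{1/2}$, because the two sums $\sum_a \lambda_a k^a$ and $\sum_a \lambda_a^{-1}k^{r-a}$ each have $r+1$ equal terms once balanced. The underlying issue is that Cauchy--Schwarz in $w$ decouples the $E$-side from the $F$-side and destroys the depth relationship that forces $a$ to be unique.

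The paper's proof avoids Cauchy--Schwarz entirely at this stage. It slices by depth pair $(j,i)$ and uses the sharper bound $\min\{k^{r-m}|E_j|,\,k^m|F_i|\}$ (with $m$ determined by $(j,i)$), then substitutes $c_j=|E_j|/k^j$ and $d_i=|F_i|/k^i$ so that the summand becomes $k^{(i+j+r)/2}\min\{c_j,d_i\}$. Because each $(j,m)$ determines a unique $i=j+r-2m$, the constrained sum is dominated by the free sum $k^{r/2}\sum_{i,j\ge 0}k^{(i+j)/2}\min\{c_j,d_i\}$, and the constant $2$ arises from proving
\[
\sum_{i,j\ge 0} k^{(i+j)/2}\min\{c_j,d_i\}\le 2\Bigl(\sum_j k^j c_j\Bigr)^{1/2}\Bigl(\sum_i k^i d_i\Bigr)^{1/2},
\]
which is done by splitting at $i<j+\alpha$ versus $i\ge j+\alpha$, summing the geometric tails, and optimizing in $\alpha$. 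It is precisely the $\min$ (rather than your geometric-mean bound $\sqrt{\cdot}$) that makes this splitting-and-optimizing step work.
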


This bound should be compared against the ``trivial'' bounds of $|E|
k^r$ and $|F| k^r$. It is superior when $|E|/|F|$ lies between $k^r$
and $k^{-r}$.   By setting $E$ and $F$ equal to concentric spheres
one can verify that the bound is essentially sharp in this case.

\begin{proof}  Let us subdivide $T = \bigcup_{j = 0}^\infty T_j$, where $T_j$
is the generation of the tree at depth $j$ (thus for instance $|T_j|
= k^j$). We then define $E_j \stackrel{\mathrm{def}}{=} E \cap T_j$
and $F_j \stackrel{\mathrm{def}}{=} F \cap T_j$. Observe that in
order for an element in $E_j$ and an element in $F_{i}$ to have
distance exactly $r$, we must have $i = j + r - 2m$ for some $m\in
\{0,\ldots,r\}$. Thus we can write
\begin{equation}\label{eq:split levels} |\{ (x,y) \in E \times F:
d(x,y) = r \}| = \sum_{m=0}^r \sum_{\substack{i,j \in \N\cup\{0\}\\
i = j+r-2m}} \left| \{ (x,y) \in E_j \times F_{i}: d(x,y) = r
\}\right|.\end{equation}

Fix $m\in \{0,\ldots,r\}$ and $i,j\in \N\cup\{0\}$ such that
$i=j+r-2m$. Observe that if $x \in T_j$ and $y \in T_{i}$ are at
distance $r$ in $T$, then the $m^{th}$ parent of $x$ equals the
$(r-m)^{th}$ parent of $y$. From this we conclude that for each $x
\in T_j$ there are at most $k^{r-m}$ elements of $y \in T_{i}$ with
$d(x,y) = r$, and conversely for each $y \in T_{i}$ there are at
most $k^m$ elements of $x \in T_j$ with $d(x,y) = r$. Thus
\begin{equation}\label{mth} \left| \{ (x,y) \in E_j \times F_{i}: d(x,y) = r
\}\right| \leq \min\left\{ k^{r-m} |E_j|, k^m |F_{i}|
\right\}.\end{equation} A combination of~\eqref{eq:split levels}
and~\eqref{mth} implies that our task is therefore to show that
\begin{equation}\label{eq:tree goal} \sum_{m=0}^r \sum_{\substack{i,j \in \N\cup\{0\}\\
i = j+r-2m}} \min\left\{ k^{r-m} |E_j|, k^m |F_{i}| \right\} \le
2|E|^{1/2} |F|^{1/2} k^{r/2}.\end{equation} If we write $c_j
\stackrel{\mathrm{def}}{=} \frac{|E_j|}{k^j}$ and $d_j
\stackrel{\mathrm{def}}{=} \frac{|F_j|}{k^j}$ for $j \geq 0$ and
$c_j \stackrel{\mathrm{def}}{=} d_j \stackrel{\mathrm{def}}{=} 0$
for $j < 0$ then we have \begin{equation}\label{eq:EF split}
\sum_{j=0}^\infty k^j c_j = |E| \quad\mathrm{and}\quad
\sum_{j=0}^\infty k^j d_j = |F|,\end{equation} and we have
\begin{multline}\label{eq:drop condition}
\sum_{m=0}^r \sum_{\substack{i,j \in \N\cup\{0\}\\
i = j+r-2m}} \min\left\{ k^{r-m} |E_j|, k^m |F_{i}| \right\} =
\sum_{m=0}^r \sum_{\substack{i,j \in \N\cup\{0\}\\
i = j+r-2m}} k^{(i+j+r)/2} \min\left\{ c_j, d_{i} \right\} \\ \leq
k^{r/2} \sum_{i,j=0}^\infty k^{(i+j)/2} \min\left\{c_j, d_{i}
\right\}.
\end{multline}
A combination of~\eqref{eq:EF split} and~\eqref{eq:drop condition}
shows that in order to prove~\eqref{eq:tree goal} it will suffice to
show that
$$ \sum_{i,j=0}^\infty k^{(i+j)/2} \min\left\{c_j, d_{i} \right\}
\le 2\left(\sum_{j\ge 0} k^j c_j\right)^{1/2} \left(\sum_{i\ge 0}
k^{i} d_{i}\right)^{1/2}.$$ To prove this inequality, let $\alpha$
be a real parameter to be chosen later, and estimate
\begin{eqnarray*}
 \sum_{i,j=0}^\infty k^{(i+j)/2} \min\left\{c_j, d_{i} \right\}
 \leq \sum_{\substack{i,j\in \N\cup\{0\}\\ i < j+\alpha}} k^{(i+j)/2} c_j + \sum_{\substack{i,j\in \N\cup \{0\}\\ i \geq j+\alpha}}
 k^{(i+j)/2} d_{i}
 \le \sum_{j=0}^\infty k^{j + \frac{\alpha}{2}} c_j + \sum_{i=0}^\infty k^{i -\frac{\alpha}{2}} d_{i}.
\end{eqnarray*}
Optimising in $\alpha$ we obtain the required result.
\end{proof}


For each $r \geq 0$, let $A^\circ_r$ denote the spherical averaging
operator
$$ A^\circ_r f(x) \stackrel{\mathrm{def}}{=} \frac{1}{\mu(S(x,r))} \sum_{y \in S(x,r)} |f(y)|.$$
Thus $M^\circ f(x)=\sup_{r\ge0} A^\circ_r f(x)$. We can use Lemma
\ref{lxy} to obtain a distributional estimate on $A^\circ_r$.

\begin{lemma}\label{lem:dist}  Let $f \in L_1(T)$, $r > 0$ and $\lambda > 0$.  Then
$$ \mu\left(A^\circ_r f \geq \lambda\right) \lesssim \sum_{\substack{n\in \N\cup\{0\}\\ 1 \leq 2^n \leq 2k^r}}
\sqrt{\frac{2^n}{k^r}}\cdot 2^n \mu\left( |f| \geq 2^{n-1} \lambda
\right).$$
\end{lemma}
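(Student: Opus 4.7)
The plan is to dyadically decompose $|f|$ by magnitude and bound the contribution of each level using the expander estimate of Lemma \ref{lxy}. Set $F_n \stackrel{\mathrm{def}}{=} \{y \in T : 2^{n-1}\lambda \leq |f(y)| < 2^n \lambda\}$ for $n \in \Z$. Pointwise, $|f| \leq \frac{\lambda}{2} + \sum_{n \geq 0} 2^n \lambda \1_{F_n}$, so averaging over $S(x,r)$ reveals that $A_r^\circ f(x) \geq \lambda$ forces
$$\sum_{n \geq 0} 2^n \frac{|S(x,r) \cap F_n|}{|S(x,r)|} \geq \frac{1}{2}.$$
A direct computation in the rooted $k$-ary tree shows $k^r \leq |S(x,r)| \leq 2k^r$ for $r \geq 1$, and I will use this to treat $|S(x,r)|$ as comparable to $k^r$ throughout.

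Let $n_0$ be the largest integer with $2^{n_0} \leq 2k^r$ (so $k^r < 2^{n_0} \leq 2k^r$). I would split into a \emph{high} case, where indices $n > n_0$ contribute at least $\frac{1}{4}$ to the above sum, and a \emph{low} case, where indices $0 \leq n \leq n_0$ contribute at least $\frac{1}{4}$; at least one must hold whenever $A_r^\circ f(x) \geq \lambda$. In the high case, any positive contribution from some $n > n_0$ means $S(x,r)$ intersects $F' \stackrel{\mathrm{def}}{=} \{|f| \geq 2^{n_0}\lambda\}$, so $x \in \bigcup_{y \in F'} S(y,r)$ by symmetry of the sphere relation, giving a measure bound of at most $2k^r |F'|$. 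Since $\sqrt{2^{n_0}/k^r} \cdot 2^{n_0} \asymp k^r$ and $|F'| \leq \mu(\{|f| \geq 2^{n_0-1}\lambda\})$, this quantity is controlled by the $n = n_0$ term on the right-hand side.

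For the low case I would apply a weighted pigeonhole with $c_n \stackrel{\mathrm{def}}{=} \gamma(2^n/k^r)^{1/4}$ for $0 \leq n \leq n_0$. The weights form a geometric progression whose sum is $\lesssim \gamma(2^{n_0}/k^r)^{1/4} \lesssim \gamma$, so choosing $\gamma$ small enough guarantees $\sum_{n=0}^{n_0} c_n < \frac{1}{4}$; hence for every such $x$ there exists $n \in \{0,\ldots,n_0\}$ with $2^n |S(x,r) \cap F_n| \geq c_n |S(x,r)|$. Denoting the corresponding set by $E_n$, double-counting incidences yields
$$\frac{c_n k^r}{2^n} |E_n| \lesssim \sum_{x \in E_n} |S(x,r) \cap F_n| \leq 2 |E_n|^{1/2} |F_n|^{1/2} k^{r/2},$$
where the upper bound is Lemma \ref{lxy} applied to $(E_n, F_n)$. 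Solving for $|E_n|$ and substituting the definition of $c_n$ produces $|E_n| \lesssim \sqrt{2^n/k^r} \cdot 2^n |F_n|$, and combining with $|F_n| \leq \mu(\{|f| \geq 2^{n-1}\lambda\})$ and summing over $n = 0,\ldots,n_0$ gives the claimed bound. The main subtlety is the choice of weights $c_n$: including indices $n > n_0$ in the pigeonhole would make $\sum_n c_n$ diverge, which is why those high indices must be peeled off and handled by the crude covering argument above.
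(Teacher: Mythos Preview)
Your argument is correct and follows essentially the same route as the paper's: both decompose $|f|$ into a constant part $\lambda/2$, dyadic levels up to roughly $k^r$, and a tail, handle the tail by the trivial covering bound, and then use a weighted pigeonhole with weights proportional to $(2^n/k^r)^{1/4}$ together with Lemma~\ref{lxy} for the remaining levels. The only differences are cosmetic (you split into high/low cases by which half of the sum exceeds $1/4$, whereas the paper separates the tail before averaging), and both arguments rely on the same sphere-size estimate $|S(x,r)|\asymp k^r$.
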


\begin{proof}  We may take $f$ to be non-negative.  By dividing $f$ by $\lambda$ we may normalize $\lambda=1$.
We bound \begin{equation}\label{eq:spherical split}
f \leq \frac{1}{2} + \sum_{\substack{n\in \N\cup\{0\}\\
1 \leq 2^n \leq k^r}} 2^n \1_{E_n} + f \1_{\{f \geq \frac12
k^r\}},\end{equation} where $E_n$ is the sublevel set
\begin{equation}\label{eq:def E_n tree} E_n \stackrel{\mathrm{def}}{=} \left\{ 2^{n-1} \leq
f < 2^n\right\}.\end{equation} Hence
\begin{equation}\label{eq:split after average}A^\circ_r f \leq \frac{1}{2} + \sum_{\substack{n\in \N\cup\{0\}\\
1 \leq 2^n \leq k^r}} 2^n A^\circ_r \left(\1_{E_n}\right) +
A^\circ_r \left(f \1_{\{f \geq \frac12 k^r\}}\right).\end{equation}
Since $\mu(S(x,r)) \le k^r$ we see that
\begin{equation}\label{eq:trivial bound in tree} \mu\left(A^\circ_r
\left(f \1_{\{f \geq \frac12 k^r\}}\right) \neq 0 \right) \le k^r
\mu\left( f \geq \frac12 k^r \right).\end{equation} Thus we have
$$
\mu\left( A^\circ_r f \geq 1 \right) \stackrel{\eqref{eq:split after
average}\wedge\eqref{eq:trivial bound in tree}}{\leq} \mu\left(
\sum_{\substack{n\in \N\cup\{0\}\\
1 \leq 2^n \leq k^r}} 2^n A^\circ_r \left(\1_{E_n}\right) \geq
\frac{1}{2} \right) + k^r \mu\left( f \geq \frac12 k^r \right).$$

Note that if
$$ \sum_{\substack{n\in \N\cup\{0\}\\
1 \leq 2^n \leq k^r}} 2^n A^\circ_r \left(\1_{E_n}\right) \geq \frac{1}{2}$$
then we necessarily have for some $n\in \N$ such that $1 \leq 2^n
\leq k^r$,
$$ A^\circ_r \left(\1_{E_n}\right) \geq \frac{1}{2^{n+4}}\left(\frac{2^n}{k^r}\right)^{1/4}.$$
Indeed, otherwise we have
$$
\frac12\le  \sum_{\substack{n\in \N\cup\{0\}\\
1 \leq 2^n \leq k^r}} 2^n A^\circ_r\left( \1_{E_n}\right)\le
\frac1{16}\sum_{\substack{n\in \N\cup\{0\}\\
1 \leq 2^n \leq k^r}}
\left(\frac{2^n}{k^r}\right)^{1/4} \le
\frac{2^{1/4}k^{r/4}-1}{16k^{r/4}\left(2^{1/4}-1\right)}<\frac12,
$$
which is a contraction.  Thus
\begin{eqnarray}\label{eq:break}
\mu\left( A^\circ_r f \geq 1 \right) \leq \sum_{\substack{n\in \N\cup\{0\}\\
1 \leq 2^n \leq k^r}} \mu(F_n) +k^r \mu\left( f \geq \frac12 k^r
\right),\end{eqnarray} where
$$ F_n \stackrel{\mathrm{def}}{=} \left\{ A^\circ_r \left(\1_{E_n}\right) \geq \frac{1}{2^{n+4}}\left(\frac{2^n}{k^r}\right)^{1/4} \right\}.$$
Note that $F_n$ is finite and observe that
$$\frac{1}{k^r} \left|\{ (x,y) \in E_n \times F_n: d(x,y) = r \}\right|=\sum_{y \in
F_n} A^\circ_r \left(\1_{E_n}\right)(y)\ge
\frac{\mu(F_n)}{2^{n+4}}\left(\frac{2^n}{k^r}\right)^{1/4}.$$
Applying Lemma \ref{lxy} we conclude that
$$ \frac{\mu(F_n)}{2^{n+4}}\left(\frac{2^n}{k^r}\right)^{1/4} \le 2\sqrt{\frac{\mu(E_n)
\mu(F_n)}{k^r}}.$$ Hence
$$ \mu(F_n) \le 2^{10}\sqrt{\frac{2^n}{k^r}}\cdot 2^n \mu(E_n).$$
Plugging this estimate into~\eqref{eq:break}, we obtain the required
result.
\end{proof}

\begin{proof}[Proof of Theorem~\ref{free}] Now we prove~\eqref{treeweak}.  Since
$M^\circ f = \sup_{r \geq 0} A^\circ_r f$,
Lemma~\ref{lem:dist} implies that
\begin{multline*}
\mu\left( M^\circ f \geq \lambda\right)  \leq \sum_{r = 0}^\infty
\mu\left( A^\circ_r f \geq \lambda \right)   \lesssim \sum_{r
=0}^\infty \sum_{\substack{n\in \N\cup\{0\}\\
1 \leq 2^n \leq 2k^r}} \sqrt{\frac{2^n}{k^r}}\cdot
2^n \mu\left( |f| \geq 2^{n-1} \lambda
\right)\\
= \sum_{x \in T} \sum_{n =0}^\infty \left(\sum_{\substack{r\in \N\cup \{0\}\\ k^r \geq 2^{n-1}}}
\frac{1}{k^{r/2}}\right) 2^{3n/2} \1_{\{|f(x)| \geq 2^{n-1}
\lambda\}} \lesssim \sum_{x \in T} \sum_{n =0}^\infty 2^n
\1_{\{|f(x)| \geq 2^{n-1} \lambda\}}  \lesssim \sum_{x \in T}
\frac{1}{\lambda} |f(x)|,
\end{multline*}
which is \eqref{treeweak}, as desired.  The proof of Theorem
\ref{free} is complete. \end{proof}


\section{Sharpness}

The purpose of this section is to prove Theorem \ref{dyadic-1} and Theorem \ref{micro-ex}.

\subsection{A preliminary construction}  Before we exhibit the full examples, we first
need a preliminary example of a maximal operator associated to a
finite Abelian group (but not to a metric) which has bad weak
$(1,1)$ behavior.

\begin{proposition}[Preliminary example]\label{prelim}  Let $q$ be a power of an odd prime,
and let $\FF_q$ be the finite field with $q$ elements.  If $q$ is
sufficiently large then there exists a vector space $X_q$ over
$\FF_q$ with counting measure $\mu$ and dimension $m = \dim(X_q)
\leq \sqrt q$, and disjoint sets $\{E_z\subseteq X_q\}_{z\in \FF_q}$ which
are symmetric around the origin (i.e. $x \in E_z$ if and only if $-x
\in E_z$) with measure
\begin{equation}\label{mujq}
 z\in F_q\implies \frac{1}{2q} \mu(X_q) < \mu(E_z) < \frac{2}{q} \mu(X_q),
\end{equation}
and such that the maximal function
$$ M_q f(x) \stackrel{\mathrm{def}}{=} \max_{z\in \FF_q} \frac{1}{\mu(E_z)} \int_{E_z} |f(x+y)| d\mu(y)$$
obeys the bounds
\begin{equation}\label{mefp}
\| M_q f \|_{L_p(X_q)} \lesssim
\left(\frac{p}{p-1}\right)^2\|f\|_{L_p(X_q)}
\end{equation}
for all $1 < p \leq \infty$,  but such that
\begin{equation}\label{mee}
\| M_q \|_{L_1(X_q) \to L_{1,\infty}(X_q)} > \frac{q}{2}.
\end{equation}
Furthermore, there exists a one-dimensional subspace $W_{-m+1}$ in
$X_q$ with the property that for all $z\in \FF_q$
\begin{equation}\label{vne}
 \mu( W_{-m+1} + E_z ) \geq \frac{1}{4} \mu(X_q),
\end{equation}
where $W_{-m+1}+E_z$ is the Minkowski sum of $W_{-m+1}$ and $E_z$.
\end{proposition}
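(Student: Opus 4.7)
The plan is to build $(X_q,\{E_z\},W_{-m+1})$ as an algebraic configuration inside $X_q=\FF_q^m$ for $m\asymp \sqrt q$. Guided by the moment curve $v(z):=(1,z,\ldots,z^{m-1})\in \FF_q^m$, we will cut $X_q$ into $q$ nearly equal pieces $E_z$ adapted to the $v(z)$ directions — concretely, for each $z$ we take $E_z$ to be a symmetrically thickened Kakeya-like ``tube'' around the line through $v(z)$, of cardinality close to $\mu(X_q)/q$ and with $\bigcup_z E_z$ equal (or very close) to $X_q$. The Vandermonde non-degeneracy of $v(z_1),\ldots,v(z_m)$ for distinct $z_i$'s is what simultaneously forces the $E_z$ to be pairwise disjoint and mutually ``orthogonal'' in an $L_2$ sense; the constraint $m\le\sqrt q$ enters in order to guarantee the latter quantitatively.

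Once the $E_z$ are chosen, \eqref{mujq} is a direct counting statement. The weak-type lower bound \eqref{mee} then drops out of testing on $f=\1_{\{0\}}$: by the symmetry of $E_z$ around the origin,
$$
M_qf(x)\ \ge\ \max_{z\in\FF_q}\frac{\1_{E_z}(x)}{\mu(E_z)}\ \ge\ \frac{q}{2\mu(X_q)}\cdot \1_{\bigcup_z E_z}(x),
$$
so that $\|M_qf\|_{L_{1,\infty}(X_q)}\ge \tfrac{q}{2\mu(X_q)}\cdot \mu(X_q)=\tfrac q2$, while $\|f\|_{L_1(X_q)}=1$. The subspace condition \eqref{vne} will be arranged by taking $W_{-m+1}$ to be a coordinate axis transverse to all of the directions $v(z)$: since every $E_z$ already fills a positive proportion of each coset of $W_{-m+1}$, the Minkowski sum $W_{-m+1}+E_z$ is forced to occupy at least a quarter of $X_q$, and in fact one can make this fraction arbitrarily close to $1$ by slightly enlarging $m$.

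The main analytic obstacle is the strong-type bound \eqref{mefp}, which must be both $q$-independent and polynomial in $p/(p-1)$. I would approach it by interpolation. Trivially $\|M_qf\|_{L_\infty(X_q)}\le\|f\|_{L_\infty(X_q)}$. At the $L_2$ endpoint, the quasi-orthogonality of the $E_z$'s (inherited from the Vandermonde structure of the moment curve) yields
$$
\|M_qf\|_{L_2(X_q)}^2\ \le\ \sum_{z\in\FF_q}\|A_zf\|_{L_2(X_q)}^2\ \lesssim\ \|f\|_{L_2(X_q)}^2,
$$
where $A_zf(x):=\frac1{\mu(E_z)}\sum_{y\in E_z}|f(x+y)|$; the crucial input is a uniform bound, via discrete Plancherel on $\FF_q^m$, on $\|\sum_z \mu(E_z)^{-2}\widehat{\1_{E_z}}\,\overline{\widehat{\1_{E_z}}}\|_{L_\infty}$, which is where the Vandermonde/moment-curve structure gets used. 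Real Marcinkiewicz interpolation between $L_2$ and $L_\infty$ then produces a bound of the desired form; the quadratic factor $(p/(p-1))^2$ arises because one power of $p/(p-1)$ comes from interpolating the maximal (vector-valued) operator and a second from interpolating the underlying square-function estimate itself. The hardest point — and the one that ultimately pins down the construction — is arranging the $E_z$ so that the orthogonality constant in the $L_2$ estimate is absolute, independent of both $q$ and $m$; this is the step that forces the dimension bound $m\le\sqrt q$.
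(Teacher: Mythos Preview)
There is a genuine gap in the $L_p$ argument. Your square-function bound
\[
\|M_qf\|_{L_2}^2\ \le\ \sum_{z\in\FF_q}\|A_zf\|_{L_2}^2\ \lesssim\ \|f\|_{L_2}^2
\]
is false as written: test on $f\equiv 1$. Each $A_z$ is an averaging operator, so $A_z 1=1$ and $\sum_z\|A_z1\|_{L_2}^2=q\,\mu(X_q)$, not $\lesssim\mu(X_q)$. There is no ``quasi-orthogonality'' among the $A_zf$ themselves; what one can hope for is smallness of $A_zf-\overline f$ (with $\overline f$ the global average). But for that, tubes around lines are exactly the wrong objects: the indicator of the line $\{tv(z):t\in\FF_q\}$ has Fourier transform supported on the entire dual hyperplane $\{\eta:\langle\eta,v(z)\rangle=0\}$, so $\widehat{\1_{E_z}}(\eta)$ is \emph{large} on a codimension-one set of frequencies, not uniformly small. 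The Vandermonde property tells you that any $m$ of the lines are in general position, but it gives you nothing like the pointwise Fourier decay you would need for a uniform-in-$z$ bound on $\|A_z-\mathrm{Avg}\|_{L_2\to L_2}$.

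The paper's construction is different in kind: one takes $E_z=Q^{-1}(z)$ for the quadratic form $Q(x)=x_1^2+\cdots+x_m^2$. A Gauss-sum computation then gives $|\widehat{\1_{E_z}}(\eta)|\le q^{-m/2}$ for \emph{every} nonzero $\eta$, uniformly in $z$. This is the crucial point: each $A_z$ is individually extremely close to the global averaging operator, with $\|f-A_zf\|_{L_2}\lesssim q^{1-m/2}\|f\|_{L_2}$. Interpolating with the trivial $L_1$ bound and then summing crudely over the $q$ values of $z$ costs a factor $q^{1/p}$, which is swamped by the $q^{-c(m-2)(1-1/p)}$ gain since $m\asymp\sqrt q$; this is where $m\le\sqrt q$ enters, and it is also what produces the $(p/(p-1))^2$ dependence. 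Your moment-curve tubes cannot supply this uniform Fourier smallness, so the interpolation step has nothing to feed on.
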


\begin{remark} {\em The dimension bound $\dim(X_q) \le \sqrt q$ is not necessary for
Theorem \ref{dyadic-1}, but will be useful for proving Theorem
\ref{micro-ex}. Conversely, the property \eqref{vne} is used for
Theorem \ref{dyadic-1} but not for Theorem \ref{micro-ex}. Even though our choice of notation for $W_{-m+1}$ seems somewhat cumbersome at this juncture, it will become convenient when we apply Proposition~\ref{prelim} in Section~\ref{countersec}. }
\end{remark}

\begin{proof}[Proof of Proposition~\ref{prelim}]  Let $m$ be the largest integer less than $\sqrt q$. We set $X_q\stackrel{\mathrm{def}}{=} \FF_q^m$ to be the
 $m$-dimensional vector space
over $\FF_q$, with counting measure $\mu$.  On this space we
consider the non-degenerate quadratic form\footnote{One could also
use here a random symmetric function from $F_q^m$ to $F_q$  if
desired; the key features of $Q$ that we shall need are that it is
even, and its Fourier coefficients are all small.} $Q: X_q \to
\FF_q$ by
$$ Q(x_1,\ldots,x_m) \stackrel{\mathrm{def}}{=} x_1^2 + \ldots + x_m^2.$$
Define for $z\in \mathbb \FF_q$
$$ E_z \stackrel{\mathrm{def}}{=} \{ x \in \FF_q^m: Q(x) = z \}=Q^{-1}(z).$$
  Clearly  $E_z$ is
symmetric around the origin.

Let $\FF_q^*$ denote the dual of the additive group of $\FF_q$. Fix
a non-trivial character $\chi\in \FF_q^*\setminus \{1\}$. Then a
standard Gauss sum argument (see Lemma 4.14 in~\cite{TV06}) shows
that since $q$ is odd,
\begin{eqnarray}\label{eq:gauss}\left|\sum_{x\in \FF_q}
\chi\left(yx^2\right)\right|=\sqrt{q}\end{eqnarray} for every $y\in
\FF_q\setminus\{0\}$.

For every $x=(x_1,\ldots,x_m),x'=(x'_1,\ldots,x'_m)\in X_q$ write
$\langle x,x'\rangle \stackrel{\mathrm{def}}{=} \sum_{j=1}^m x_jx'_j\in \FF_q$. Then
for every $\eta\in X_q$ and $y\in\FF_q\setminus\{0\}$ we have (using
the fact that $q$ is odd),
\begin{multline}\label{eq:gauss2}
\left|\int_{X_q}\chi\left(yQ(x)+\langle
\eta,x\rangle\right)d\mu(x)\right| =\prod_{j=1}^m \left|\sum_{x_j\in
\FF_q} \chi\left(yx_j^2+\eta_jx_j\right)\right|\\=\prod_{j=1}^m
\left|\sum_{x_j\in \FF_q}
\chi\left(y\left(x_j+\frac{\eta_j}{2y}\right)^2\right)\right|\stackrel{\eqref{eq:gauss}}{=}q^{m/2}=\frac{\mu(X_q)}{q^{m/2}}.
\end{multline}

Consider the elementary identity
\begin{eqnarray}\label{eq:id}
\1_{E_z}(x)=\frac{1}{q} \sum_{y\in \FF_q}
\chi(-yz)\chi\left(yQ(x)\right).
\end{eqnarray}
For every $\eta \in X_q$  and $z\in \FF_q$ write
$$
\widehat \1_{E_z}(\eta)\stackrel{\mathrm{def}}{=} \frac{1}{\mu(X_q)} \int_{X_q}
\1_{E_z}(x)\chi\left(\langle \eta,x\rangle\right)d\mu(x).
$$
Then
\begin{multline}\label{eq:measure Ez}
\left|\frac{\mu\left(E_z\right)}{\mu(X_q)}-\frac{1}{q}\right|=\left|\widehat
\1_{E_z}(0)-\frac{1}{q}\right|\stackrel{\eqref{eq:id}}{\le}
\frac{1}{q\mu(X_q)}\sum_{y\in \FF_q\setminus\{0\}} \left|\int_{X_q}
\chi\left(yQ(x)+\langle
\eta,x\rangle-yz\right)d\mu(x)\right|\\\stackrel{\eqref{eq:gauss2}}{\le}
\frac{1}{q^{m/2}}\le \frac{\sqrt{q}}{q^{\frac12\sqrt{q}}}\le
\frac{1}{2q},
\end{multline}
provided that $q$ is large enough. This proves~\eqref{mujq}.
Moreover, for every $\eta\in X_q\setminus \{0\}$,
\begin{eqnarray}\label{eq:eta}
\left|\widehat \1_{E_z}(\eta)\right|\stackrel{\eqref{eq:id}}{\le}
\frac{1}{q\mu(X_q)}\sum_{y\in \FF_q\setminus
\{0\}}\left|\int_{X_q}\chi \left(yQ(x)+\langle\eta,x\rangle\right)
d\mu(x) \right|\stackrel{\eqref{eq:gauss2}}{\le} \frac{1}{q^{m/2}}\le
\frac{\sqrt{q}}{q^{\frac12\sqrt{q}}}.
\end{eqnarray}

Consider the averaging operator
$$
A_zf(x)\stackrel{\mathrm{def}}{=} \frac{1}{\mu(E_z)} \int_{E_z} f(x+y)d\mu(y).
$$
Inequalities~\eqref{eq:measure Ez} and~\eqref{eq:eta}, combined with
Parseval's identity, imply the $L_2$ bound
\begin{eqnarray}\label{eq:parseval}
\left\|f-A_zf\right\|_{L_2(X_q)}\le \frac{\mu(X_q)}{\mu(E_z)}\cdot
\max_{\eta\in X_q\setminus \{0\}} \left|\widehat
\1_{E_z}(\eta)\right|\cdot \|f\|_{L_2(X_q)}\le
\frac{2\|f\|_{L_2(X_q)}}{q^{\frac{m}{2}-1}}.
\end{eqnarray}
On the other hand, since $A_z$ is a contraction in $L_1$, we have
\begin{eqnarray}\label{eq:contraction}
\left\|f-A_zf\right\|_{L_1(X_q)}\le 2\|f\|_{L_1(X_q)}.
\end{eqnarray}
Interpolating between~\eqref{eq:parseval} and~\eqref{eq:contraction}
(see~\cite{Zyg02}) we get that for every $1\le p \le 2$,
\begin{eqnarray}\label{eq:interpol}
\left\|f-A_zf\right\|_{L_p(X_q)}\le 2^{\frac{2}{p}-1}\cdot
\left(\frac{2}{q^{\frac{m}{2}-1}}\right)^{2-\frac{2}{p}}\|f\|_{L_p(X_q)}=
2\left(q^{1-\frac{m}{2}}\right)^{2-\frac{2}{p}}\|f\|_{L_p(X_q)}.
\end{eqnarray}
Hence
\begin{multline*}
\left\|\max_{z\in \FF_q} \big||f|-A_z\left(|f|\right)\big|\right\|_{L_p(X_q)}\le
\left(\sum_{z\in \FF_q}\Big\|
\big||f|-A_z\left(|f|\right)\big|\Big\|_{L_p(X_q)}^p\right)^{1/p}\\\stackrel{\eqref{eq:interpol}}{\le}
2q^{1/p}\left(q^{1-\frac{m}{2}}\right)^{2-\frac{2}{p}}\|f\|_{L_p(X_q)}.
\end{multline*}
Thus
\begin{multline}\label{eq:getting the square}
\|M_qf\|_{L_p(X_q)}
\le\left(1+2q^{1/p}\left(q^{1-\frac{m}{2}}\right)^{2-\frac{2}{p}}\right)\|f\|_{L_p(X_q)}\\\le\left(1+2q^{1/p}
\left(q^{\frac{3}{2}-\frac{\sqrt
q}{2}}\right)^{2-\frac{2}{p}}\right)\|f\|_{L_p(X_q)}\lesssim \frac{\|f\|_{L_p(X_q)}}{(p-1)^2}.
\end{multline}
The last step in~\eqref{eq:getting the square} can be proved as follows: for $q\ge 36$, the term $1+2q^{1/p}
\left(q^{\frac{3}{2}-\frac{\sqrt
q}{2}}\right)^{2-\frac{2}{p}}$ is $\lesssim 1+ q^{1-\e-\frac{\e\sqrt{q}}{2}}$, where we write $\frac{1}{p}=1-\e$. Now consider the cases $\e\ge \frac{2}{\sqrt{q}}$ and $\e<\frac{2}{\sqrt{q}}$ separately. The bound~\eqref{eq:getting the square} proves~\eqref{mefp} when $1\le p\le 2$. The case $p>2$ follows
from a similar interpolation argument, using trivial bound
$\|M_qf\|_{L_\infty(X_q)}\le \|f\|_{L_\infty(X_q)}$.

To prove~\eqref{mee}, let $f \stackrel{\mathrm{def}}{=} \1_{\{0\}}$ be the indicator
function of the origin $0$.  Then $\|f\|_{L_1(X)} = 1$.  Since the
sets $\{E_z\}_{z\in \FF_q}$ cover $X_q$, we see from~\eqref{mujq}
that $M_q f(x) > \frac{q}{2\mu(X_q)}$ for all $x \in X_q$,
and~\eqref{mee} follows (setting $\lambda$ slightly larger than
$\frac{q}{2\mu(X_q)}$).

Finally, let $W_{-m+1}$ be the span of the first basis vector
$e_1\in X_q=\FF_q^m$. Let $S$ denote the set of squares in $\FF_q$,
i.e. $S\stackrel{\mathrm{def}}{=} \{x^2:x\in \FF_q\}$. Since $q$ is odd,
$|S|=\frac{q+1}{2}$. Observe that $(x_1,\ldots,x_m)$ lies in $E_z +
W_{-m+1}$ if and only if $x_2^2 + \ldots + x_m^2$ is in $z-S$.
Arguing as in~\eqref{eq:measure Ez} we deduce that
\begin{eqnarray*}
\mu\left(E_z + W_{-m+1}\right)&=&\sum_{s\in
S}\mu\left((x_1,x_2,\ldots,x_m)\in \FF_q^{m}:\
x_2^2+\cdots+x_m^2=z-s\right)\\&=&\sum_{s\in
S}q\left|\{(x_2,\ldots,x_m)\in \FF_q^{m-1}:\
x_2^2+\cdots+x_m^2=z-s\}\right|\\&\ge&\sum_{s\in S}
q\left(\frac{|\FF_q^{m-1}|}{q}-\frac{|\FF_q^{m-1}|}{q^{(m-1)/2}}\right)
\\&=&\mu(X_q)\left(\frac{q+1}{2q}-\frac{q+1}{2q^{(m-1)/2}}\right).
\end{eqnarray*}
This establishes the bound~\eqref{vne} for $q$ sufficiently large.
\end{proof}

\begin{remark} {\em This example once again demonstrates the
(well-known) fact that $L_2$-type smoothing estimates, such as those
arising from smallness of Fourier coefficients, can imply $L_p$ maximal
 bounds by standard interpolation arguments, but do not necessarily imply weak-type $(1,1)$
 bounds.}
\end{remark}

\subsection{The doubling example}\label{countersec}

We now prove Theorem~\ref{dyadic-1}.  The claim is trivial for $K
\leq 48$, so we will assume $K \geq 48$.  By Bertrand's postulate we
may find an odd prime $q$ between $K/4$ and $K/2$, which we now fix.
We then let $\FF_q$, $X_q$ and $\{E_z\}_{z\in \FF_q}$ be as in
Proposition~\ref{prelim}. Fix an arbitrary enumeration of the points
in $\FF_q$, say $\FF_q=\{z_1,\ldots,z_q\}$ and write $E_{z_j}=E_j$
(this will not create any ambiguity in what follows). It will also be convenient to set $E_0=\{0\}$. The maximal
function $M_q$ in Proposition~\ref{prelim} is not associated to a
metric, let alone one with the doubling property~\eqref{eq:def doubling}, since
the sets $E_j$ are not nested. However, this can be remedied by
extending the space $X_q$ in the following fashion.

We let $X \stackrel{\mathrm{def}}{=} X_q \times \FF_q^q$ be the Cartesian product of
$X_q$ with the vector space $\FF_q^q$, with counting measure $\mu$.
We also let
$$ \{0\} = V_0 \subseteq V_1 \subseteq \ldots \subseteq V_q = \FF_q^q$$
be the standard flag in $\FF_q^q$, thus $V_j$ is the span of $\{e_1,\ldots,e_j\}$ for $j\in \{0,\ldots,q\}$, where $e_1,\ldots,e_q$ is the standard basis of $\FF_q^q$.  In particular
\begin{equation}\label{vqj}
j\in\{0,\ldots,q\}\implies \mu(V_j) = q^j.
\end{equation}
Recall that $X_q$ is itself a vector space $\FF_q^m$ over $\FF_q$,
thus we have another flag
$$ \{0\} = W_{-m} \subseteq W_{-m+1} \subseteq \ldots \subseteq W_0 = X_q,$$
where
\begin{equation}\label{wqj}
j\in \{0,\ldots,m\}\implies \mu(W_{-j}) = \frac{\mu(X_q)}{q^j}.
\end{equation}
We can ensure that $W_{-m+1}$ is the one-dimensional subspace mentioned in Proposition \ref{prelim}.

For $v\in \FF_q$ let $j(v)$ denote the minimal $j\in \{0,\ldots,q\}$ such that $v\in V_j$. For $u\in X_q$ and $j\in \{0,\ldots,q\}$, let $\ell_j(u)$ be the maximal $\ell\in \{0,\ldots,m\}$ such that $u\in E_j+W_{-\ell}$. Now, for $(u,v),(u',v')\in X$ define
\begin{equation}\label{eq:def d}
d((u,v),(u',v')) \stackrel{\mathrm{def}}{=} 4^{j(v-v')}\1_{\{v\neq v'\}}+2^{-\ell_{j(v-v')}(u-u')}\1_{\{u\neq u'\}}.
\end{equation}

We claim that $d$ is a translation invariant metric on $X$. The translation invariance and non-degeneracy of $d$ are immediate from the definition. The symmetry of $d$ follows from the fact that the $E_j\subseteq X_q$ are symmetric around the origin. It therefore remains to verify that for all $x,y\in X$ we have $d(x+y,0)\le d(x,0)+d(y,0)$. Write $x=(u,v)$, $y=(u',v')$, $j=j(v)$, $j'=j(v')$, $\ell=\ell_{j}(u)$, $\ell'=\ell_{j'}(u')$. Without loss of generality $j\ge j'$. Then $v\in V_j$ and $v'\in V_{j'}\subseteq V_j$. So, $v+v'\in V_j$, i.e., $j(v+v')\le j$.  Denoting $\ell''=\ell_{j(v+v')}(u+u')$, we see that it suffices to prove the inequality
\begin{equation}\label{eq:goal sub addtivie}
4^{j}\1_{\{v+v'\neq 0\}}+2^{-\ell''}\1_{\{u+u'\neq 0\}}\le 4^j\1_{\{v\neq 0\}}+2^{-\ell}\1_{\{u\neq 0\}}+4^{j'}\1_{\{v'\neq 0\}}+2^{-\ell'}\1_{\{u'\neq 0\}}
\end{equation}
If $j'\ge 1$ then $v,v'\neq 0$, and~\eqref{eq:goal sub addtivie} holds since $4^{j'}\ge 4\ge 2^{-\ell''}$. On the other hand, if $j'=0$ (equivalently $v'=0$) then by definition $u'\in W_{-\ell'}$. Since $u\in E_j+W_{-\ell}$, it follows that $u+u'\in E_j+W_{-\ell}+W_{-\ell'}=E_{j(v+v')}+W_{-\min\{\ell,\ell'\}}$. Thus $\ell''\ge \min\{\ell,\ell'\}$, and~\eqref{eq:goal sub addtivie} follows from the trivial inequality  $2^{-\min\{\ell,\ell'\}}\1_{\{u+u'\neq 0\}}\le 2^{-\ell}\1_{\{u\neq 0\}}+ 2^{-\ell'}\1_{\{u'\neq 0\}}$.


The balls in the metric $d$ take the following form:
\begin{equation}\label{eq:range1}
r\ge 4^q+1\implies B(0,r)=X,
\end{equation}
\begin{equation}\label{eq:range2}
\exists j\in \{1,\ldots,q-1\},\  4^j+1\le r<4^{j+1}\implies B(0,r)=X_q\times V_j,
\end{equation}
\begin{equation}\label{eq:range3}
\exists j\in \{1,\ldots,q-1\},\  4^j \le r< 4^j+2^{-m+1}\implies B(0,r)=\left(E_j\times V_j\right)\cup \left(X_q\times
V_{j-1}\right),
\end{equation}
\begin{multline}\label{eq:range4}
\exists (j,\ell)\in \{1,\ldots,q-1\}\times \{1,\ldots, m-1\},\ 4^j+2^{-\ell}\le r< 4^j+2^{-\ell+1}\\\implies B(0,r)=\left((E_j+W_{-\ell})\times V_j\right)\cup \left(X_q\times
V_{j-1}\right),
\end{multline}
\begin{equation}\label{eq:range5}
1\le r<4\implies B(0,r)=X_q\times \{0\},
\end{equation}
\begin{equation}\label{eq:range6}
\exists \ell\in \{1,\ldots, m\},\ 2^{-\ell}\le r<2^{-\ell+1}\implies B(0,r)=W_{-j}\times \{0\}.
\end{equation}

We shall first of all prove that $(X,d,\mu)$ is doubling with constant
$2q\le K$. For $r\ge 4$ take $j\in \{1,2,\ldots\}$ such that $4^j\le r
<4^{j+1}$. If, in addition, $4^j+1\le r<4^{j+1}$ then since $2r<4^{j+2}$, it follows from~\eqref{eq:range2}, \eqref{eq:range3}, \eqref{eq:range4}  that $B(x,2r)\subseteq X_q\times V_{j+1}$, implying that
\begin{equation}\label{eq:first doubling}
\mu\left(B(0,2r)\right)\le \mu\left(X_q\times
V_{j+1}\right)=q^{j+1}\mu\left(X_q\right)\le
q\cdot\mu\left(X_q\times V_j\right)\stackrel{\eqref{eq:range2}}{=}q\cdot\mu\left(B(0,r)\right).
\end{equation}
On the other hand, if $4^j\le r<4^j+1$ then $4^j+1\le 2r< 4^{j+1}$. Note that~\eqref{eq:range3}, \eqref{eq:range4} imply that
\begin{equation}\label{eq:trivial inclusion}
E_j\times V_j\subseteq B\left(0,r\right),
\end{equation}
and therefore
\begin{equation*}
\mu\left(B(0,2r)\right)\stackrel{\eqref{eq:range2}}{=}\mu\left(X_q\times V_j\right)=q^j\mu(X_q)\stackrel{\eqref{mujq}}{\le}
2q^{j+1}\mu\left(E_j\right)=2q\mu\left(E_j\times V_j\right)\stackrel{\eqref{eq:trivial inclusion}}{\le}2q\mu\left(B\left(x,r\right)\right).
\end{equation*}
Similarly, using~\eqref{eq:range5}, \eqref{eq:range6}, also for $0<r<4$ we have $\mu\left(B(0,2r)\right)\le q\mu\left(B(0,r)\right)$. Thus $(X,d,\mu)$ is doubling with constant $2q$, as claimed.

Now, from \eqref{mee} we can find $f_q: X_q \to \R_+$ with norm $\|f_q\|_{L_1(X_q)} = 1$ and $\lambda > 0$ such that
\begin{equation}\label{eq:bad f} \mu( M_q f_q > \lambda ) > \frac{q}{2\lambda}.\end{equation}
We extend this function $f_q$ to a function $f: X \to \R_+$ defined
by $f(x,y) \stackrel{\mathrm{def}}{=} f_q(x)$ for $x \in X_q$ and $y \in \FF_q^q$.
Thus
\begin{equation}\label{eq:norm bad f} \|f\|_{L_1(X)} = |\FF_q^q| \cdot \|f_q\|_{L_1(X_q)} = q^q.\end{equation}

We shall next compute $M_{2^\Z} f(x,y)$ for $(x,y) \in X_q \times \FF_q^q = X$.  Actually, for very minor
technical reasons we need to consider the slight variant
\begin{equation}\label{eq:def Meps} M^\eps_{2^\Z} f(x,y) = \sup_{r \in 2^\Z} \frac{1}{\mu(B((x,y),(1+\eps) r))} \int_{B((x,y),(1+\eps)r)} |f(x,y)|\ d\mu(x,y)\end{equation}
for some small $\eps > 0$,
but this clearly will not make a difference since we can rescale the metric by $1+\eps$.

Observe that for any $1 \leq j \leq q$ we have
$$ M^\eps_{2^\Z} f(x,y) \geq \frac{1}{\mu\left(B(0,(1+\e)4^j)\right)} \sum_{(x',y') \in B(0,(1+\e)4^j)} f_q(x+x').$$
Note that if $0<\e<4^{-q}\cdot 2^{-m+1}$ then it follows from~\eqref{eq:range3} that
\begin{equation*}
\mu\left(B(0,(1+\e)4^j)\right)\le \mu(E_j\times V_j)+\mu(X_q\times V_{j-1})\stackrel{\eqref{mujq}}{\le} \frac{2}{q}\mu(X_q)q^j+\mu(X_q)q^{j-1}=3q^{j-1}\mu(X_q).
\end{equation*}
Using the inclusion $B(0,(1+\e)4^j)\supseteq E_j\times V_j$, which trivially follows from~\eqref{eq:range3}, we conclude that
$$ M^\eps_{2^\Z} f(x,y) \geq \frac{1}{3 q^{j-1} \mu(X_q)} \sum_{x' \in E_j} \sum_{y' \in V_j} f_q(x+x').$$
Hence, in combination with \eqref{vqj} and \eqref{mujq}, we get the bound
$$ M^\eps_{2^\Z} f(x,y) \geq \frac{1}{6 \mu(E_j)} \sum_{x' \in E_j} f_q(x+x').$$
Taking the supremum over all $j$ we conclude the pointwise estimate
$$ M^\eps_{2^\Z} f(x,y) \geq \frac{1}{6} M_q f_q(x).$$
In particular we have
$$ \mu\left( M^\eps_{2^\Z} f > \frac{1}{6} \lambda  \right) \geq |\FF_q^q| \mu\left(  M_q f_q > \lambda  \right)
> q^q \cdot \frac{q}{2\lambda}.$$
Recalling~\eqref{eq:norm bad f} we thus see that
$$ \| M^\eps_{2^\Z} \|_{L_1(X) \to L_{1,\infty}(X)} \geq \frac{1}{12} q \geq \frac{K}{48},$$
yielding \eqref{mosh}.

The only remaining task is to establish the $L_p$ bounds $\|M\|_{L_p(X)\to L_p(X)}\lesssim_p 1$, for $p>1$. To do this let's examine what equations~\eqref{eq:range1}--\eqref{eq:range6} say about the measures of the balls $B(0,r)$ appearing in the definition of the maximal function $M$. For $r < 4$, the balls all take the form $W_{-j} \times \{0\}$ for some $-m \leq -j \leq 0$. For $4^j \le r < 4^j + 2^{-m+1}$ for some $1 \leq j \leq q$, the ball $B(0,r)$ is equal to the union of the two sets
$E_j \times V_j$ and $X_q \times V_{j-1}$, which have the same measure up to a universal factor thanks to \eqref{mujq}, \eqref{vqj}.  For $4^j+2^{-m+1} \le r < 4^{j+1}$, we see that the ball $B(0,r)$ lies between $(E_j+W_{-m+1}) \times V_j$ and $X_q \times V_j$, and so thanks to~\eqref{vne} has measure comparable to $X_q \times V_q$.  Putting all this together, we obtain the pointwise bound

\begin{multline}\label{mix}
M g(x,y) \lesssim  \max_{-m \leq -j \leq 0} \frac{1}{\mu(W_{-j})} \sum_{x' \in x + W_{-j}} |g(x',y)|
 + \max_{0 \leq j \leq q} \frac{1}{\mu(X_q) \mu(V_j)} \sum_{x' \in X_q} \sum_{y' \in y + V_j} |g(x',y')|\\
+ \max_{0 \leq j \leq q} \frac{1}{\mu(E_j) \mu(V_j)} \sum_{x' \in x + E_j} \sum_{y' \in y + V_j} |g(x',y')|
\end{multline}
for all functions $g:X\to \R$.

If we let $\mathscr{B}_{-j}$, for $-m \leq -j \leq 0$, be the $\sigma$-algebra on $X$ generated by the cosets of $W_{-j} \times \{0\}$, we have
\begin{equation}\label{eq:use doob Bj} \max_{-m \leq -j \leq 0} \frac{1}{\mu(W_{-j})} \sum_{x' \in x + W_{-j}} |g(x',y)|
= \max_{-m \leq -j \leq 0} \E\left[ |g| \big | \B_{-j} \right](x,y),\end{equation}
where $\E\left[ |g| \big | \B_{-j} \right]$ denotes the conditional expectation of $|g|$ with respect to the $\sigma$-algebra $\B_{-j}$.  Applying Doob's maximal inequality (Proposition~\ref{doob}), we thus see that this expression is bounded on $L_p$, i.e.,
\begin{equation}\label{eq:use doob first term}
\left(\int_X\left|\sup_{-m \leq -j \leq 0} \frac{1}{\mu(W_{-j})} \sum_{x' \in x + W_{-j}} |g(x',y)|\right|^pd\mu(x,y)\right)^{1/p}\le \frac{p}{p-1}\|g\|_{L_p(X)}.
\end{equation}
A similar argument disposes of the second term in \eqref{mix}, i.e.,
  \begin{equation}\label{eq:use doob second term}
\left(\int_X\left|\max_{0 \leq j \leq q} \frac{1}{\mu(X_q) \mu(V_j)} \sum_{x' \in X_q} \sum_{y' \in y + V_j} |g(x',y')|\right|^pd\mu(x,y)\right)^{1/p}\le \frac{p}{p-1}\|g\|_{L_p(X)}.
\end{equation}
By combining~\eqref{eq:use doob first term} and~\eqref{eq:use doob second term} with~\eqref{mix}, we see that it suffices to establish the bound
  \begin{equation}\label{eq:goal third term}
\left(\int_X\left|\max_{0 \leq j \leq q} \frac{1}{\mu(E_j) \mu(V_j)} \sum_{x' \in x + E_j} \sum_{y' \in y + V_j} |g(x',y')|\right|^pd\mu(x,y)\right)^{1/p}\lesssim \left(\frac{p}{p-1}\right)^3\|g\|_{L_p(X)}.
\end{equation}
We can bound the left-hand side of~\eqref{eq:goal third term} by
 \begin{equation*}\label{eq:G appears}
\left(\int_X\left|\max_{0 \leq j \leq q} \frac{1}{\mu(V_j)} \sum_{y' \in y + V_j} G(x,y')\right|^pd\mu(x,y)\right)^{1/p},
\end{equation*}
where
$$ G(x,y') \stackrel{\mathrm{def}}{=} \max_{0 \leq j \leq q} \frac{1}{\mu(E_j)} \sum_{x' \in x + E_j} |g(x',y')|.$$
Applying Doob's maximal inequality again, we thus reduce to showing that
$$ \| G \|_{L_p(X)} \lesssim \left(\frac{p}{p-1}\right)^2\|g\|_{L_p(X)}.$$
But this follows from \eqref{mefp} (and Fubini's theorem).  The proof of Theorem \ref{dyadic-1} is complete.\qed

\subsection{The Ahlfors-David regular example}\label{sec:AD}

Now we prove Theorem \ref{micro-ex}.  Once again we may take $n$ to
be large, as the claim is easy for bounded $n$ (e.g., one could take
the usual Hardy-Littlewood maximal function on $\R^n$).

The heart of our construction is the following lemma:

\begin{lemma}\label{lem:finite AD}
There exists a finite Abelian group $X$, equipped with counting measure $\mu$ and an invariant metric $d_X$, with the following properties:
\begin{enumerate}
\item The are integers $a<b$ such that for all $x,y\in X$ we have $d_X(x,y)\in \{0\}\cup\{3^{j/n}\}_{j=a}^b$.
\item For all $r\in [3^{a/n},3^{b/n}]$ and all $x\in X$ we have \begin{equation}\label{eq:AD finite}
    3^{-a} r^n\le \mu(B(x,r))\le 3^{-a+3}r^n.\end{equation}
    \item $\|M\|_{L_1(X)\to L_{1,\infty}(X)}\gtrsim n\log n$.
    \item $\left\|M_{2^\Z}\right\|_{L_1(X)\to L_{1,\infty}(X)}\gtrsim \log n$.
    \item For all $1<p\le \infty$ we have $\|M\|_{L_p(X)\to L_p(X)}\lesssim_p 1$.
\end{enumerate}
\end{lemma}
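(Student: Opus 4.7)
The plan is to adapt the doubling construction of Section~\ref{countersec} by iterating it at $L \asymp \log n$ independent scales, arranged so that the global metric is Ahlfors--David $n$-regular rather than merely doubling. Fix an odd prime $q \asymp n$ large enough to apply Proposition~\ref{prelim}, yielding $X_q = \FF_q^m$ (with $m \le \sqrt q$), the sets $\{E_z\}_{z \in \FF_q}$, the flags $\{W_{-j}\}_{j=0}^m$ and $\{V_j\}_{j=0}^q$ (in $\FF_q^q$), and the maximal operator $M_q$ with $\|M_q\|_{L_p \to L_p} \lesssim_p 1$ and $\|M_q\|_{L_1 \to L_{1,\infty}} \gtrsim q$. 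I would then take the underlying finite Abelian group to be
\[
 X \stackrel{\mathrm{def}}{=} \bigl( X_q \times \FF_q^q \bigr)^L,
\]
with counting measure $\mu$, and use the hierarchical flag obtained by concatenating $L$ copies of the flag from Section~\ref{countersec} (from innermost $W_{-m} \times \{0\}$ in the first copy, growing through each $E_j \times V_j$ and $X_q \times V_j$ inside the first copy, then jumping to the second copy, and so on).

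The metric $d_X$ is then defined by assigning distance values $3^{j/n}$ to each successive flag level, with integer jumps in $j$ calibrated so that the cardinality jumps match $r^n$. The three qualitatively different transitions in Section~\ref{countersec} (within-$W$ growth by factor $q^{1/m}$, within-$V$ growth by $q$, and the $E_j \times V_j \to X_q \times V_j$ jump of factor $q$) are each multiplied across $L$ blocks; in each case the $j$-gap is chosen equal to $n$ times the logarithm (base~$3$) of the measure jump, so that between adjacent levels one has $|B(x, 3^{j'/n})| / |B(x, 3^{j/n})| = 3^{(j'-j)/n}$. The total number of flag levels is $L(m+q+2) \lesssim L q$, so $b-a \asymp Ln \log q \asymp L n$, and the distance set is exactly $\{3^{j/n}: a \le j \le b\}$. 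The triangle inequality is verified exactly as for the metric~\eqref{eq:def d}, level by level, using that the $E_j$ are symmetric and that $E_j + W_{-\ell} \supseteq E_j$; Ahlfors--David regularity~\eqref{eq:AD finite} for \emph{every} $r \in [3^{a/n}, 3^{b/n}]$ is immediate from the calibration of gap sizes since the cardinality of $B(x,r)$ is piecewise constant in $r$ between adjacent distance values.

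For the weak-$(1,1)$ lower bound, take $f = \1_{\{0\}}$, so $\|f\|_{L_1} = 1$. Inside each block $\ell \in \{1,\dots,L\}$ the argument of Section~\ref{countersec} (based on~\eqref{eq:bad f}) produces a level $\lambda_\ell > 0$ and a set of measure $\gtrsim q \cdot \mu_\ell / \lambda_\ell$ on which $M f > \lambda_\ell$, where $\mu_\ell$ is the measure of the largest ball lying within the $\ell$-th block. Since the blocks live at disjoint scale ranges, these bad sets are mutually disjoint, and moreover each block already contributes through at least one \emph{lacunary} (power-of-$2$) radius; summing over $\ell$ yields $\|M\|_{L_1 \to L_{1,\infty}} \gtrsim qL \asymp n\log n$ and $\|M_{2^\Z}\|_{L_1 \to L_{1,\infty}} \gtrsim L \asymp \log n$.

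The $L_p$ upper bound proceeds exactly as at the end of Section~\ref{countersec}. Dominating $Mg$ pointwise by a sum of three families of averages indexed by flag level (the $W$-type, the $V$-type, and the $E_j \times V_j$-type), the first two families are genuine conditional expectations with respect to a nested sequence of $\sigma$-algebras, so their maximum is controlled in $L_p$ by Doob's inequality (Proposition~\ref{doob}) with constant $\tfrac{p}{p-1}$. The $E_j \times V_j$-type maximal function at the $\ell$-th block is a tensor product of $M_q$ on the $\ell$-th $X_q$-factor with averaging on the lower-indexed $V$-levels; Fubini combined with~\eqref{mefp} and Doob on the $V$-averages gives an $L_p$ bound of $O_p(1)$ for each block, and iterating Doob across blocks (as in~\eqref{eq:use doob first term}--\eqref{eq:goal third term}) controls the maximum over all blocks by $O_p(1)$. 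The main obstacle will be the bookkeeping in the calibration step: making sure that after matching the measure jumps to $3^{(\text{gap})/n}$, all distance values lie in the prescribed set and the Ahlfors--David bounds hold uniformly at \emph{every} intermediate radius, not just at flag-transition radii, which is what permits the interpolation used implicitly in~\eqref{eq:first doubling}.
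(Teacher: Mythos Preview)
Your weak-$(1,1)$ lower-bound argument has a genuine gap. You claim that by stacking $L\asymp\log n$ blocks, each contributing weak-$(1,1)$ constant $\gtrsim q\asymp n$, the combined operator has weak-$(1,1)$ norm $\gtrsim qL\asymp n\log n$ because ``the bad sets are mutually disjoint.'' But the weak-$(1,1)$ norm is a supremum over pairs $(f,\lambda)$, not a sum: disjoint sets $S_\ell$ satisfying $\lambda_\ell\,\mu(S_\ell)\gtrsim q\|f\|_1$ for \emph{different} thresholds $\lambda_\ell$ do not combine into a single witness for $qL$. In fact the paper's own localisation theorem (Theorem~\ref{local}) rules this strategy out in principle: once the space is Ahlfors--David $n$-regular, the full norm is $\asymp\sup_r\|M_{R\cap[r,nr]}\|$, and in your calibration a window $[r,nr]$ meets essentially one block (one block has total measure $q^{m+q}$, hence radius range $(q^{m+q})^{1/n}\asymp n$), so the localised and therefore full norm is only $\asymp n$. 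Concretely, your proposed test function $f=\1_{\{0\}}$ satisfies $Mf(p)=1/\mu(B(0,d(p,0)))$, whose super-level sets are balls, giving $\lambda\,\mu(\{Mf>\lambda\})\lesssim 1$ for every $\lambda$; it cannot witness any nontrivial lower bound at all.

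The paper's construction avoids this by taking $q=3^k\asymp n\log n$ in a \emph{single} block $X=X_q\times\FF_3^q$ (note: $\FF_3^q$ with the flag $\mu(V_j)=3^j$, not $\FF_q^q$), with balls
\[
B_j=(X_q\times V_j)\cup\bigcup_{\ell=1}^{\min\{j+k,q\}}(E_\ell\times V_\ell),\qquad d(x,y)=\min\{3^{j/n}:x-y\in B_j\}.
\]
The union over the $k\asymp\log n$ trailing shells is precisely what smooths the measure growth from a factor of $q$ per level (which would only be doubling) down to a factor of $3$ per level, so that $\mu(B_j)\asymp 3^j\mu(X_q)$ and the metric is Ahlfors--David $n$-regular. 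The $n\log n$ lower bound then follows directly from the single operator $M_q$ with $q\asymp n\log n$, using the function $f(x,y)=f_q(x)$; no iteration across scale blocks is involved, nor could it help.
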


\begin{proof}[Proof of Theorem~\ref{micro-ex} assuming Lemma~\ref{lem:finite AD}]
 In what follows $\FF_3$ denotes the field of size $3$. Let $Y$ be the subspace of $\FF_3^{\aleph_0}$ consisting of all finitely supported vectors, equipped with the counting measure $\nu$. For $(y_1,y_2,\ldots)\in Y$ let $j(y)$ denote the largest  $j\in \N$ such that $y_j\neq 0$. If $y=0$ we set $j(y)=-\infty$. For $y,y'\in Y$ define
$$
\rho_Y(y,y')\stackrel{\mathrm{def}}{=} 3^{b/n}\cdot3^{j(y-y')/n}.
$$
Then $\rho_Y$ is an invariant ultrametric on $Y$, satisfying $\rho_Y(y,y')\in \{0\}\cup\{3^{(b+j)/n}\}_{j=1}^\infty$ for all $y,y'\in Y$. Let $Y_j\subseteq Y$ denote the set of vectors whose support is contained in the first $j$ coordinates. Thus $Y_j$ is a subspace of $Y$ and $Y_j=B_{\rho_Y}\left(0,3^{(b+j)/n}\right)$. Since $\nu(Y_j)=3^j$, it follows that for all $r\ge 3^{b/n}$ and $y\in Y$ we have
\begin{equation}\label{eq:nu AD}
3^{-b-1} r^n\le \nu\left(B_{\rho_Y}(y,r)\right)\le 3^{-b}r^n.
\end{equation}

Next, we let $Z$ denote the set $\FF_3^{\aleph_0}$, and let $\tau$ denote the countable product of the normalized counting measure on $\FF_3$. Thus $\tau$ is an invariant probability measure on $Z$. For $k\in \N$ let $Z_k$ be the subspace of $Z$ consisting of $(z_1,z_2,\ldots)\in Z$ with $z_1=z_2=\ldots=z_k=0$ (we shall also use the convention $Z_0=Z$). Thus $\tau(Z_k)=3^{-k}$. For $z\in Z$ let $k(z)$ denote the largest integer $k\ge 0$ such that $z\in Z_k$ (with the convention $k(0)=\infty)$. For $z,z'\in Z$ define
$$
\rho_Z(z,z') \stackrel{\mathrm{def}}{=} 3^{(a-1)/n}\cdot 3^{-k(z-z')/n}.
$$
Then $\rho_Z$ is an invariant ultrametric on $Z$, satisfying $\rho_Z(z,z')\in \{0\}\cup\{3^{(a-j)/n}\}_{j=1}^\infty$ for all $z,z'\in Y$. It follows from the definitions that for all $k\ge 0$ we have  $B_{\rho_Z}\left(0,3^{(a-k-1)/n}\right)=Z_k$. Let $\sigma$ be the invariant measure on $Z$ given by $\sigma=3^{a-1}\tau$. Thus for all $r\le 3^{a/n}$ we have,
\begin{equation}\label{eq:sigma AD}
\frac13 r^n\le \sigma\left(B_{\rho_Z}(y,r)\right)\le 3r^n.
\end{equation}

We shall now let $G$ be the Abelian group $Z\times X\times Y$, equipped with the $\ell_\infty$ product metric
$
d_G\left((z,x,y),(z',x',y')\right)=\max\left\{\rho_Z(z,z'),d(x,x'),\rho_Y(y,y')\right\}.
$
We shall also equip $G$ with the product measure $\mu_G=\sigma\times \mu\times \nu$.

The balls in $G$ are given by $B_{d_G}(0,r)=B_{\rho_Z}(0,r)\times B_{d_X}(0,r)\times B_{\rho_Y}(0,r)$. If $r\ge 3^{b/n}$ then $B_{d_X}(0,r)=X$, and thus by~\eqref{eq:AD finite} we have
$\mu\left(B_{d_X}(0,r)\right)\in \left[3^{b-a},3^{b-a+3}\right]$. Similarly, for $r\ge 3^{b/n}$ we have $B_{\rho_Z}(0,r)=Z$, and thus $\sigma\left(B_{\rho_Z}(0,r)\right)=3^{a-1}$. It therefore follows from~\eqref{eq:nu AD} that
\begin{equation}\label{eq:AD big r}
r\ge 3^{b/n}\implies \mu_G\left(B_{d_G}(0,r)\right)\in \left[\frac{1}{9}r^n,9r^n\right].
\end{equation}
If $3^{a/n}\le r< 3^{b/n}$, then $B_{\rho_Y}(0,r)=\{0\}$, and hence $\nu\left(B_{\rho_Y}(0,r)\right)=1$. As before, we also have in this case $\sigma\left(B_{\rho_Z}(0,r)\right)=3^{a-1}$, and by~\eqref{eq:AD finite}, $\mu\left(B_{d_X}(0,r)\right)\in \left[3^{-a}r^n,3^{-a+3}r^n\right]$. Thus,
\begin{equation}\label{eq:AD medium r}
3^{a/n}\le r< 3^{b/n}\implies \mu_G\left(B_{d_G}(0,r)\right)\in \left[\frac{1}{3}r^n,9r^n\right].
\end{equation}
Finally, for $r<3^{a/n}$ we have $B_{\rho_Y}(0,r)=\{0\}$ and $B_{d_X}(0,r)=\{0\}$ and so $\nu\left(B_{\rho_Y}(0,r)\right)=\mu\left(B_{d_X}(0,r)\right)=1$. In combination with~\eqref{eq:sigma AD}, we see that
\begin{equation}\label{eq:AD small r}
r< 3^{a/n}\implies \mu_G\left(B_{d_G}(0,r)\right)\in \left[\frac{1}{3}r^n, 3r^n\right].
\end{equation}
Inequalities~\eqref{eq:AD big r}, \eqref{eq:AD medium r}, \eqref{eq:AD small r} show that the metric measure space $(G,d_G,\mu_G)$ is Ahlfors-David $n$-regular.

 It remains to prove the estimates \eqref{micro-1}, \eqref{micro-2}, \eqref{micro-3}. By assertion $(3)$ of Lemma~\ref{lem:finite AD} we can find $f: X \to \R_+$ with $\|f\|_{L_1(X)} = 1$, and $\lambda > 0$, such that
\begin{equation}\label{eq:assumption lower f}
\mu\left( M f > \lambda \right) \gtrsim \frac{n\log n}{\lambda}.
\end{equation}
Define a function $g:G\to \R_+$ by $g(z,x,y)=f(x)\1_{\{y=0\}}$. Then $\|g\|_{L_1(G)}=\sigma(Z)=3^{a-1}$. Moreover, we have the pointwise estimate
\begin{multline*}
Mg(x,y,z)\ge \sup_{3^{a/n}\le r\le 3^{b/n}}\frac{\int_{B_{\rho_Z}(0,r)\times B_{d_X}(0,r)\times B_{\rho_Y}(0,r)}f(x+x')\1_{\{z+z'=0\}}d\mu_G(z',x',y')}{\mu_G\left(B_{\rho_Z}(0,r)\times B_{d_X}(0,r)\times B_{\rho_Y}(0,r)\right)}\\=\left(Mf(x)\right)\1_{\{z=0\}},
\end{multline*}
where we used the fact that $B_{\rho_Y}(0,r)=\{0\}$ for $r\le 3^{b/n}$.
Thus by Fubini's theorem,
$$
\mu_G\left(Mg>\lambda\right)\ge \sigma(Z)\mu\left(Mf>\lambda\right)\stackrel{\eqref{eq:assumption lower f}}{\gtrsim} 3^{a-1}\frac{n\log n}{\lambda}=\frac{n\log n}{\lambda}\|g\|_{L_1(G)}.
$$
This proves~\eqref{micro-1}; the proof of~\eqref{micro-2} is identical.
To prove~\eqref{micro-3} take an non-negative $h\in L_p(G)$, and observe the pointwise bound
\begin{eqnarray}
Mh(z,x,y)&\le& \label{first term product} \sup_{r<3^{a/n}}\frac{\int_{B_{\rho_Z}(0,r)}h(z+z',x,y)d\sigma(z')}{\sigma\left(B_{\rho_Z}(0,r)\right)}
\\&\phantom{\le}&\label{second term product}+ \sup_{3^{a/n}\le r\le 3^{b/n}}\frac{\int_{Z\times B_{d_X}(0,r)}h(z',x+x',y)d\sigma(z')d\mu(x')}{2^{a-1}\mu\left(B_{d_X}(0,r)\right)}\\&\phantom{\le}&\label{third term product}+
\sup_{r> 3^{b/n}}\frac{\int_{Z\times X\times B_{\rho_Y}(0,r)}h(z',x',y+y')d\sigma(z')d\mu(x')d\nu(y')}{2^{a-1}\cdot3^{b-a}\nu\left(B_{\rho_Y}(0,r)\right)},
\end{eqnarray}
where in the denominator of~\eqref{third term product} we used the fact that $\mu(x)\ge 3^{b-a}$.

Since $\rho_Z$ is an ultrametric, Doob's maximal inequality implies that for all $x\in X$ and $y\in Y$ we have,
$$
\int_Z \left(\sup_{r<3^{a/n}}\frac{\int_{B_{\rho_Z}(0,r)}h(z+z',x,y)d\sigma(z')}{\sigma\left(B_{\rho_Z}(0,r)\right)}\right)^p
d\sigma(z)\lesssim_p \int_Z h(z,x,y)^pd\sigma(z).
$$
Thus, by Fubini's theorem, the $L_p(G)$ norm of the term in~\eqref{first term product} is $\lesssim_p \|h\|_{L_p(G)}$. A similar argument shows that the $L_p(G)$ norm of the term in~\eqref{third term product} is $\lesssim_p \|h\|_{L_p(G)}$. Finally, using assertion $(5)$ of Lemma~\ref{lem:finite AD}, we get the same bound for the term in~\eqref{second term product}, proving~\eqref{micro-3}.
\end{proof}

\begin{proof}[Proof of Lemma~\ref{lem:finite AD}]
Let $q=3^k$ be a power of three between $\frac{1}{3} n \log n$ and
$\frac{1}{9} n \log n$. We invoke Proposition \ref{prelim} to
create a vector space $X_q = \FF_q^m$ over a finite field $\FF_q$
with counting measure $\mu$, together with sets $E_1,\ldots,E_q$
obeying the properties stated in Proposition~\ref{prelim}; in
particular
\begin{equation}\label{nbound}
m \lesssim \sqrt{n \log n}.
\end{equation}
Note that $\FF_q$ can itself be viewed as a vector space over the
field $\FF_3$ of three elements, and thus $X_q$ is a vector space
over $\FF_3$ of dimension
\begin{equation}\label{Mdef}
M \stackrel{\mathrm{def}}{=} mk= m \log_3 q \lesssim n^{1/2} (\log n
)^{3/2}.
\end{equation}

As in Section~\ref{countersec}, the idea is to take a Cartesian product of $X_q$ with another vector space, and try to create balls which resemble the product of a set $E_j$ with a subspace.  Some care is however required in order to make the construction compatible with both the constraint \eqref{eq:def micro} and the triangle inequality.

Analogously to the arguments in Section~\ref{countersec}, we shall need a flag
$$ \{0\} = W_{-M} \subseteq W_{-M+1} \subseteq \ldots \subseteq W_0 = X_q$$
of vector spaces over $\FF_3$ in $X_q$, so that $\mu( W_{-j} ) =
3^{-j} \mu( X_q )$ for all $-M \leq -j \leq 0$.  (We will not use
\eqref{vne} or the space $W_{-m+1}$ in Proposition \ref{prelim}, so
there is no collision of notation here.)

Our space shall
be $X \stackrel{\mathrm{def}}{=} X_q \times \FF_3^{q}$, with
counting measure $\mu$. We shall need a flag
$$ \{0\}=V_0 \subseteq V_1 \subseteq \ldots \subseteq V_{q} = \FF_3^{q}$$
in $\FF_3^{q}$, with $\mu(V_j) = 3^j$.

For every integer $-M \leq j \leq q$, we define the set $B_j
\subseteq X =  X_q \times \FF_3^{q}$ as follows:
\begin{itemize}
\item If $-M \leq j \leq 0$, we set $B_j \stackrel{\mathrm{def}}{=} W_j \times \{0\}$.
\item If $1 \leq j \leq q$, we set
\begin{equation}\label{eq:def Bj}B_j \stackrel{\mathrm{def}}{=} \left(X_q \times V_{j}\right) \bigcup \left(\bigcup_{\ell=1}^{\min\{j+k,q\}}
(E_{\ell} \times V_{\ell})\right).\end{equation}
\end{itemize}
The $B_j$ are symmetric and nested, with
\begin{equation}\label{bnest}
 \{0\} = B_{-M} \subseteq B_{-M+1} \subseteq \ldots \subseteq B_{q} = X.
 \end{equation}

We define a function $d: X \times X \to \R_+$ by setting $d(x,x) =
0$ for all $x \in X$, and
\begin{equation}\label{ddef}
d(x,y) \stackrel{\mathrm{def}}{=} \min \left\{
3^{j/n}: x-y \in B_j \right\},
\end{equation}
for all distinct $x,y \in X$.  Thus $d$ takes values in $\{0\} \cup
\left\{ 3^{j/n}:\  -M+1 \leq j \leq q
\right\}$. The first assertion of Lemma~\ref{lem:finite AD} therefore holds with $a=-M+1$ and $b=q$.

\begin{claim} $d$ is a translation-invariant metric on $X$.
\end{claim}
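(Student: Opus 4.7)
The plan is to verify the four metric axioms, with the triangle inequality being the substantive step. Translation invariance is immediate since $d$ is defined purely in terms of translates of $0$. Each set $B_j$ is symmetric around the origin: for $-M\le j\le 0$ because $W_j$ is a linear subspace, and for $1\le j\le q$ because $X_q$, $V_j$, and $V_\ell$ are subspaces and the sets $E_\ell$ are symmetric by Proposition~\ref{prelim}. Hence $d(x,y)=d(y,x)$. Non-degeneracy is clear because $B_q=X$ and $3^{j/n}>0$.

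For the triangle inequality, translation invariance reduces the task to showing $d(u+v,0)\le d(u,0)+d(v,0)$ for all $u,v\in X$. The key step will be the closure property
\[
-M+1\le i\le j\le q\implies B_i+B_j\subseteq B_{\min\{j+k,\,q\}},
\]
verified by a short case analysis on the signs of $i$ and $j$ using the explicit form~\eqref{eq:def Bj} of $B_j$. When $i,j\le 0$ the sets $B_i=W_i\times\{0\}$ are subgroups and $B_i+B_j=W_j\times\{0\}=B_j$. When $i\le 0<j$, every element of $B_i$ has the shape $(x_1,0)$, so adding it to any $v\in B_j$ preserves the second coordinate; distinguishing the two ways $v$ can lie in $B_j$ (either $v\in X_q\times V_j$, or $v\in E_\ell\times V_\ell$ for some $\ell\in(j,\min\{j+k,q\}]$), in both cases the sum lies in $X_q\times V_\ell\subseteq B_\ell\subseteq B_{\min\{j+k,q\}}$. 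When $i,j\ge 1$, the analogous analysis applied to both $u$ and $v$ shows that the second coordinate of $u+v$ always lies in $V_\ell$ for some $\ell\le\min\{j+k,q\}$, again placing $u+v$ in $X_q\times V_\ell\subseteq B_\ell$.

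Granted the closure property, the triangle inequality reduces to the numerical inequality $3^{\min\{j+k,q\}/n}\le 3^{i/n}+3^{j/n}$ for $i\le j$. This is trivial when $\min\{j+k,q\}\le j$; otherwise, dividing through by $3^{j/n}$, one must show $3^{s/n}-1\le 3^{(i-j)/n}$ with $s\le k$. The crucial quantitative input is that $k=\log_3 q$ is tiny compared to $n$: since $q\asymp n\log n$ we have $3^{k/n}-1\lesssim (\log n)/n$, while the bound $|i-j|\le q+M-1$ combined with the estimate $M=mk=o(n)$ coming from~\eqref{nbound} gives $3^{(i-j)/n}\gtrsim n^{-c}$ for some absolute $c<1$. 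Because $(\log n)/n$ is eventually dominated by $n^{-c}$, the inequality holds for $n$ sufficiently large, which is permissible since Theorem~\ref{micro-ex} only needs to be proved for large $n$. The main delicacy will be precisely this last estimate: the margin is only polynomial, and this is exactly why the construction must take $q$ no larger than a constant times $n\log n$.
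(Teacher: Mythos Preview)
Your proposal is correct and follows essentially the same strategy as the paper. Both arguments reduce the triangle inequality to the combination of (a) a containment $B_i+B_j\subseteq B_{j+k}$ (for $i\le j$) and (b) the numerical fact that $3^{k/n}-1$ is much smaller than $3^{-(q+M)/n}$, which hinges on $k\asymp\log n$ while $q\le\frac13 n\log n$ and $M=o(n)$. The only organizational difference is that the paper packages this as a contrapositive---assuming $3^{j''/n}>3^{i/n}+3^{j/n}$ and deducing $j''>j+k$, hence $B_i+B_j\subseteq X_q\times V_{\min\{j+k,q\}}\subseteq B_{j''-1}$---whereas you extract the closure property first and then verify the numerical inequality directly; the content is identical.
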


\begin{proof} The translation-invariance, non-degeneracy,
 and symmetry properties of $d$ are obvious (symmetry follows from the symmetry of $E_j$). The only non-trivial task is to
 verify the triangle inequality.  By construction, it will suffice to show that
$ x + x' \in B_{j''-1}$
whenever $x \in B_j$, $x' \in B_{j'}$, and $-M < j, j', j'' \leq
q$ are such that
\begin{equation}\label{eq:triangle ineq assumption}
3^{j''/n}
> 3^{j/n} +
3^{j'/n}.
\end{equation}

By symmetry we may assume that $j \leq j'$. It follows from~\eqref{eq:triangle
ineq assumption} that provided $n$ is large enough,
\begin{equation}\label{eq:j+s}
3^{j''/n}> 3^{j'/n}\left(1+3^{-(M+q-1)/n}\right)\ge 3^{j'/n}\left(1+3^{-\frac12 \log n}\right)\ge 3^{(j'+k)/n},
\end{equation}
where we used the fact that $q\le \frac{1}{3}n\log n$, while $M\lesssim n^{1/2}(\log n)^{3/2}$ and $k =\log_3q\lesssim \log n$.

It follows from~\eqref{eq:j+s} that
\begin{equation}\label{eq:j+s2}
j''>j'+k.
\end{equation}
If $j' \le 0$, then we
have $B_j + B_{j'} = B_{j'}$, so $x+x' \in B_{j'} \subseteq
B_{j''-k}\subseteq B_{j''-1}$, as required. Assume therefore that $j' \geq 1$. Then
$B_j \subseteq B_{j'} \subseteq X_q \times V_{\min\{j'+k,q\}}$, and
hence $x+x' \in X_q \times V_{\min\{j'+k,q\}}$. On the other hand, we
will have $X_q \times V_{\min\{j'+k,q\}} \subseteq B_{j''-1}$ as soon
as $\min\{j'+k,q\} < j''$.  Since $j'' \leq q$, it follows from~\eqref{eq:j+s2} that $j'+k<q$. Hence, using~\eqref{eq:j+s2} once more, we see that
$\min\{j'+k,q\}=j'+k<j''$, as required.
\end{proof}

\begin{claim}\label{claim:AD}
For all $r\in \left[3^{-(M-1)/n},3^{q/n}\right]$ and all $x\in X$, we have
$$
\frac13 r^n\le\frac{\mu(B(x,r))}{\mu(X_q)}\le 4r^n.
$$
\end{claim}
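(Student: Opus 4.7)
My plan is to use the translation invariance of $d$ and $\mu$ to reduce to $x = 0$. Since the metric $d$ takes only the values in $\{0\} \cup \{3^{j/n} : -M+1 \le j \le q\}$ and the sets $B_j$ are nested (see \eqref{bnest}), for each $r$ in the given range there is a unique $j \in \{-M+1,\ldots,q\}$ with $3^{j/n} \le r < 3^{(j+1)/n}$ (with the convention that $r=3^{q/n}$ hits $j=q$), and one has $B(0,r) = B_j$. Since $3^{j/n} \le r < 3^{(j+1)/n}$ gives $r^n/3 < 3^j \le r^n$, the claim reduces to showing $3^j \mu(X_q) \le \mu(B_j) \le 4 \cdot 3^j \mu(X_q)$ for every such $j$.

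For $-M+1 \le j \le 0$ this is immediate, since $B_j = W_j \times \{0\}$ and $\mu(W_j) = 3^j \mu(X_q)$ by our choice of flag. The substance of the proof lies in the range $1 \le j \le q$. Here I would exploit the crucial fact that the sets $\{E_\ell\}_{\ell=1}^q$ from Proposition~\ref{prelim} are pairwise disjoint, so the only overlaps among the pieces appearing in \eqref{eq:def Bj} occur between $X_q \times V_j$ and each $E_\ell \times V_\ell$ (for $\ell \ge j$), with intersection exactly $E_\ell \times V_j$. Thus
$$\mu(B_j) = 3^j \mu(X_q) + \sum_{\ell=j+1}^{\min\{j+k,\,q\}} \mu(E_\ell)(3^\ell - 3^j).$$

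The lower bound $\mu(B_j) \ge 3^j \mu(X_q)$ is then trivial, which yields $\mu(B_j)/\mu(X_q) \ge 3^j > r^n/3$. For the upper bound I would use $\mu(E_\ell) < \tfrac{2}{q}\mu(X_q)$ from \eqref{mujq} together with $q = 3^k$, and bound the geometric sum by its largest term:
$$\sum_{\ell=j+1}^{\min\{j+k,\,q\}} \mu(E_\ell)\, 3^\ell \;\le\; \frac{2\mu(X_q)}{3^k} \cdot \frac{3}{2}\cdot 3^{\min\{j+k,q\}} \;\le\; 3 \cdot 3^j\, \mu(X_q),$$
where in the case $j+k > q$ I use $j > q-k$ so that $3^{q-k} < 3^j$. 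This gives $\mu(B_j) \le 4 \cdot 3^j \mu(X_q) \le 4 r^n \mu(X_q)$, completing the proof.

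The only delicate point is the careful bookkeeping of the overlap between $X_q \times V_j$ and the sets $E_\ell \times V_\ell$; once the disjointness of the $E_\ell$ is invoked, the rest is a short calculation. Note that this argument also shows that the constants $\tfrac13$ and $4$ cannot both be replaced by $1$, reflecting the fact that the measures of $B_j$ jump by multiplicative factors strictly between $1$ and $3$ as $j$ increases; this mild non-regularity is exactly what the Ahlfors-David bounds in the final theorem absorb.
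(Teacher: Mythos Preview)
Your proof is correct and follows essentially the same route as the paper's: reduce to $x=0$, identify $B(0,r)=B_j$ for the appropriate $j$, handle $j\le 0$ via $\mu(W_j)=3^j\mu(X_q)$, and for $j\ge 1$ use $X_q\times V_j\subseteq B_j$ for the lower bound and \eqref{mujq} together with $q=3^k$ for the upper bound. The only difference is cosmetic: you invoke the disjointness of the $E_\ell$ to write down the exact value $\mu(B_j)=3^j\mu(X_q)+\sum_{\ell=j+1}^{\min\{j+k,q\}}\mu(E_\ell)(3^\ell-3^j)$ before bounding, whereas the paper simply overcounts via $\mu(B_j)\le \mu(X_q\times V_j)+\sum_{\ell=1}^{\min\{j+k,q\}}\mu(E_\ell\times V_\ell)$; both estimates lead to the same $4\cdot 3^j\mu(X_q)$.
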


\begin{proof}
By translation invariance we may assume that $x=0$. Let $j$ be the integer such that $3^{j/n}\le r<3^{(j+1)/n}$. Then $B(0,r)=B_j$. If $j\le 0$ then $B_j=W_j\times \{0\}$, and hence
\begin{equation}\label{eq:muBj, j neg}
\frac{\mu(B(x,r))}{\mu(X_q)}=\frac{\mu(B_j)}{\mu(X_q)}=3^j\in \left[\frac13 r^n,r^n\right].
\end{equation}
If $j\ge 1$ the it follows from~\eqref{eq:def Bj} that $B_j\supseteq X_q\times V_j$, and hence
\begin{equation}\label{muB_j, j pos lower bd}
\frac{\mu(B(0,r))}{\mu(X_q)}=\frac{\mu(B_j)}{\mu(X_q)}\ge \mu(V_j)=3^j\ge \frac13r^n.
\end{equation}
At the same time, it follows from~\eqref{eq:def Bj} that
\begin{multline}\label{eq:compute measure1} \frac{\mu(B(0,r))}{\mu(X_q)}= \frac{\mu(B_j)}{\mu(X_q)} \le 3^j +
\sum_{\ell=1}^{\min\{j+k,q\}} \frac{\mu(E_\ell)}{\mu(X_q)}\mu(V_\ell)\stackrel{\eqref{mujq}}{\leq}  3^j +\frac{2}{q}
\sum_{\ell=1}^{\min\{j+k,q\}}  3^j\\\le 3^j+\frac{3}{q}\cdot 3^{\min\{j+k,q\}}=3^j+\frac{3}{3^k}\cdot 3^{\min\{j+k,q\}}=4\cdot 3^j\le 4r^n,\end{multline}
as required.
\end{proof}
Claim~\ref{claim:AD} implies the second assertion of Lemma~\ref{lem:finite AD}, since $\mu(X_q)=3^M=3^{-a+1}$.

 We shall now prove the third assertion of Lemma~\ref{lem:finite AD}. Since the balls $B(x,r)$ in $X$ take the form $x+B_j$ for some $j$, we have
$$ M f(x) = \max_{-M \leq j \leq q} \frac{1}{\mu(B_j)} \sum_{y \in B_j} |f(x+y)|.$$
>From \eqref{mee} we can find $f_q: X_q \to \R_+$ with $\|f_q\|_{L_1(X_q)} = 1$, and $\lambda > 0$, such that
\begin{equation}\label{eq:assumption lower fq}
\mu\left( M_q f_q > \lambda \right) > \frac{q}{2\lambda}.
\end{equation}
We extend this function $f_q$ to a function $f: X \to \R_+$ defined
by $f(x,y) \stackrel{\mathrm{def}}{=} f_q(x)$ for $x \in X_q$ and $y \in \FF_3^{q}$.
Thus,
\begin{equation}\label{eq:f norm} \|f\|_{L_1(X)} = 3^{q}.
\end{equation}
Observe  that  for $1\le j\le q-k$ we have,
$$
\mu(B_j)\stackrel{\eqref{eq:compute measure1}}{\le} 4\mu(X_q)3^j\stackrel{\eqref{mujq}}{\le} 8q\mu(E_{j+k})\mu(V_j)=8\mu(E_{j+k})\mu(V_{j+k}).
$$
Hence, for all $(x,x')\in X$ we have,
$$ M f(x,x') \ge \max_{1 \leq j \leq q-k} \frac{1}{8\mu(V_{j+k})\mu(E_{j+k})}
\sum_{(y,y') \in B_j} |f_q(x+y)|.$$
Since $B_j$ contains $E_{j+k} \times V_{j+k}$ for $1\le j\le q-k$, we conclude that
\begin{multline}\label{substract the log} M f(x,x') \ge \max_{k+1 \leq \ell \leq q} \frac{1}{8\mu(E_\ell)}
\sum_{y \in E_\ell} |f_q(x+y)|\\\ge \frac{1}{8}\left(\max_{1 \leq j \leq q} \frac{1}{\mu(E_j)}
\sum_{y \in E_j} |f_q(x+y)|-\sum_{j=1}^{k}\frac{1}{\mu(E_j)}
\sum_{y \in E_j} |f_q(x+y)|\right).
\end{multline}
Denote $g:X\to \R$ by $g(x,x')=\sum_{j=1}^k\frac{1}{\mu(E_j)}
\sum_{y \in E_j} |f_q(x+y)|$. Then
\begin{equation}\label{eq:g norm}
\|g\|_{L_1(X)}\le k3^q\|f_q\|_{L_1(X_q)}\stackrel{\eqref{eq:f norm}}{=} \|f\|_{L_1(X)}\log_3 q.
\end{equation}
It follows from~\eqref{substract the log} that we have the pointwise bound $M_qf_q(x)\le 8 Mf(x,x')+g(x,x')$. Thus,
\begin{eqnarray*}
\frac{q\|f\|_{L_1(X)}}{2\lambda}&\stackrel{\eqref{eq:assumption lower fq}\wedge\eqref{eq:f norm}}{\le}&\mu\left((x,x')\in X:\ M_qf_q(x)>\lambda\right)\\&\le&\mu\left(8Mf+g>\lambda\right)\\&\le& \mu\left(Mf>\frac{\lambda}{16}\right)+\mu\left(g>\frac{\lambda}{2}\right)\\&\le& \mu\left(Mf>\frac{\lambda}{16}\right)+\frac{2\|g\|_{L_1(X)}}{\lambda}\\&\stackrel{\eqref{eq:g norm}}{\le}& \mu\left(Mf>\frac{\lambda}{16}\right)+\frac{2\log_3 q\|f\|_{L_1(X)}}{\lambda}.
\end{eqnarray*}
Hence,
\begin{equation}\label{eq:nlog n}
\| M \|_{L_1(X) \to L_{1,\infty}(X)} \gtrsim q\gtrsim n\log n,
\end{equation}
which gives the third assertion of Lemma~\ref{lem:finite AD}.

A similar argument (requiring a closer inspection of the details of Proposition \ref{prelim}) can be used to give the fourth assertion of Lemma~\ref{lem:finite AD}; alternatively, one can use \eqref{eq:nlog n} and the pigeonhole principle to show that a dilated version $M_{r \cdot 2^\Z}$ of the lacunary maximal function has weak $(1,1)$ norm $\gtrsim \log n$
for some $r > 0$, and then rescale the metric.  We omit the details.

It remains to verify the $L_p$ bound in assertion $(5)$ of Lemma~\ref{lem:finite AD}, i.e., to show for all $f \in L_p(X)$ we have
$$ \left\| \max_{-M \leq j \leq q} \frac{1}{\mu(B_j)} \sum_{y \in B_j} |f(x+y)| \right\|_{L_p(X)}
\lesssim_p \|f\|_{L_p(X)}.$$
The contribution of the case $-M \leq j \leq 0$ can be handled by Doob's maximal inequality as in Section~\ref{countersec}, so we need only consider the case $1 \leq j \leq q$.  Using~\eqref{eq:compute measure2} and the definition of $B_j$, we soon verify the pointwise estimate
\begin{multline}\label{eq:pointwise micro}
\max_{1 \leq j \leq q} \frac{1}{\mu(B_j)} \sum_{y \in B_j} |f(x+y)|
\lesssim \max_{1 \leq i \leq q} \frac{1}{\mu(X_q \times V_i)} \sum_{y \in X_q \times V_i} |f(x+y)| \\
 + \max_{1 \leq i \leq q} \frac{1}{\mu(E_i \times V_i)} \sum_{y \in E_i \times V_i} |f(x+y)|.
\end{multline}
Indeed, denote
$$
h(x)=\max_{1 \leq i \leq q} \frac{1}{\mu(X_q \times V_i)} \sum_{y \in X_q \times V_i} |f(x+y)|
 + \max_{1 \leq i \leq q} \frac{1}{\mu(E_i \times V_i)} \sum_{y \in E_i \times V_i} |f(x+y)|.
$$
Then for all $1\le j\le q$,
\begin{multline*}
\frac{1}{\mu(B_j)} \sum_{y \in B_j} |f(x+y)|\stackrel{\eqref{eq:def Bj}}{\lesssim} \frac{\sum_{y\in X_q\times V_j}|f(x+y)|+\sum_{\ell=1}^{\min\{j+k,q\}}\sum_{y\in E_\ell\times V_\ell}|f(x+y)|}{\mu(B_j)}\\
\stackrel{\eqref{muB_j, j pos lower bd}}{\le} \frac{h(x)\mu\left(X_q\times V_j\right)+\sum_{\ell=1}^{\min\{j+k,q\}}h(x)\mu\left(E_\ell\times V_\ell\right)}{3^j\mu(X_q)}\stackrel{\eqref{eq:compute measure1}}{\le} 4h(x),
\end{multline*}
proving~\eqref{eq:pointwise micro}.

The fact that the first term in the right-hand side of~\eqref{eq:pointwise micro} is bounded in $L_p(X)$ again follows from Doob's maximal
inequality, while the $L_p(X)$ boundedness of the second term in the right-hand side of~\eqref{eq:pointwise micro} follows from
\eqref{mefp}, Doob's maximal inequality and a Fubini
argument, as in Section~\ref{countersec}. The proof of Theorem
\ref{micro-ex} is now complete. \end{proof}



\bibliographystyle{abbrv}
\bibliography{HLM}

\end{document}
\endinput

thus suffices
to show that
$$ j'' \geq j' + s.$$
But we have
$$ \left(1+\frac{1}{n}\right)^{j''} > \left(1+\frac{1}{n}\right)^{j} + \left(1+\frac{1}{n}\right)^{j'} \geq (1+\frac{1}{n})^{j}
(1 + (1+\frac{1}{n})^{-(M+q+s)} )$$
and thus on taking logarithms
$$ j'' \geq j + \frac{ \log (1 + (1+\frac{1}{n})^{-(M+q+s)}) }{\log (1 + \frac{1}{n}) }.$$
But from the bounds on $M, q, s$ and the hypothesis that $n$ is large we easily verify that
$$ \frac{ \log (1 + (1+\frac{1}{n})^{-(M+q+s)}) }{\log (1 + \frac{1}{n}) } \geq s $$
and the claim follows.

\subsection{The microdoubling example}\label{microsec}

Now we prove Theorem \ref{micro-ex}.  Once again we may take $n$ to
be large, as the claim is easy for bounded $n$ (e.g., one could take
the usual Hardy-Littlewood maximal function on $\R^n$).

Let $q$ be the power of three between $\frac{1}{100} n \log n$ and
$\frac{1}{300} n \log n$. We invoke Proposition \ref{prelim} to
create a vector space $X_q = \FF_q^m$ over a finite field $\FF_q$
with counting measure $\mu$, together with sets $E_1,\ldots,E_q$
obeying the properties stated in Proposition~\ref{prelim}; in
particular
\begin{equation}\label{nbound}
m \lesssim \sqrt{n \log n}.
\end{equation}
Note that $\FF_q$ can itself be viewed as a vector space over the
field $\FF_3$ of three elements, and thus $X_q$ is a vector space
over $\FF_3$ of dimension
\begin{equation}\label{Mdef}
M \stackrel{\mathrm{def}}{=} m \log_3 q \lesssim n^{1/2} (\log n
)^{3/2}.
\end{equation}

As before, the idea is to take a Cartesian product of $X_q$ with another vector space, and try to create balls which resemble the product of a set $E_j$ with a subspace.  Some care is however required in order to make the construction compatible with both the constraint \eqref{eq:def micro} and the triangle inequality.

Analogously to the arguments in the previous subsection, we shall need a flag
$$ \{0\} = W_{-M} \subseteq W_{-M+1} \subseteq \ldots \subseteq W_0 = X_q$$
of vector spaces over $\FF_3$ in $X_q$, so that $\mu( W_{-j} ) =
3^{-j} \mu( X_q )$ for all $-M \leq -j \leq 0$.  (We will not use
\eqref{vne} or the space $W_{-m+1}$ in Proposition \ref{prelim}, so
there is no collision of notation here.)

Let $s$ be the largest integer less than $n^{1/5}$.  Our space shall
be $X \stackrel{\mathrm{def}}{=} X_q \times \FF_3^{q}$, with
counting measure $\mu$. We shall need a flag
$$ \{0\}=V_0 \subseteq V_1 \subseteq \ldots \subseteq V_{q} = \FF_3^{q}$$
in $\FF_3^{q}$, with $\mu(V_j) = 3^j$.

For every integer $-M \leq j \leq q+s$, we define the set $B_j
\subseteq X =  X_q \times \FF_3^{q}$ as follows:
\begin{itemize}
\item If $-M \leq j \leq 0$, we set $B_j \stackrel{\mathrm{def}}{=} W_j \times \{0\}$.
\item If $0 \leq j \leq q+s$, we set
\begin{equation}\label{eq:def Bj}B_j \stackrel{\mathrm{def}}{=} \left(X_q \times V_{\max\{j-s,0\}}\right) \bigcup \left(\bigcup_{\ell=1}^{\min\{j,q\}}
(E_{\ell} \times V_{\ell})\right).\end{equation}  (Note that the definitions are
consistent for $j=0$.)
\end{itemize}
The $B_j$ are symmetric and nested, with
\begin{equation}\label{bnest}
 0 = B_{-M} \subseteq B_{-M+1} \subseteq \ldots \subseteq B_{q+s} = X.
 \end{equation}

We define a function $d: X \times X \to \R_+$ by setting $d(x,x) =
0$ for all $x \in X$, and
\begin{equation}\label{ddef}
d(x,y) \stackrel{\mathrm{def}}{=} \inf \left\{
\left(1+\frac{1}{n}\right)^j: x-y \in B_j \right\},
\end{equation}
for all distinct $x,y \in X$.  Thus $d$ takes values in $\{0\} \cup
\left\{ \left(1+\frac{1}{n}\right)^j:\  -M \leq j \leq q+s
\right\}$.

\begin{lemma} $d$ is a translation-invariant metric on $X$.
\end{lemma}

\begin{proof} The translation-invariance, non-degeneracy,
 and symmetry properties of $d$ are obvious; the only non-trivial task is to
 verify the triangle inequality.  By construction, it will suffice to show that
$ x + x' \in B_{j''-1}$
whenever $x \in B_j$, $x' \in B_{j'}$, and $-M \leq j, j', j'' \leq
q+s$ are such that
\begin{equation}\label{eq:triangle ineq assumption}
\left(1+\frac{1}{n}\right)^{j''}
> \left(1+\frac{1}{n}\right)^{j} +
\left(1+\frac{1}{n}\right)^{j'}.
\end{equation}

By symmetry we may assume that $j \leq j'$. It follows from~\eqref{eq:triangle
ineq assumption} that provided $n$ is large enough,
\begin{multline}\label{eq:j+s}
\left(1+\frac{1}{n}\right)^{j''}> \left(1+\frac{1}{n}\right)^{j'}\left(1+\left(1+\frac{1}{n}\right)^{-(M+q+s)}\right)\ge \left(1+\frac{1}{n}\right)^{j'}\left(1+\left(1+\frac{1}{n}\right)^{-\frac{n\log n}{50}}\right)\\
\ge \left(1+\frac{1}{n}\right)^{j'}\left(1+\frac{1}{n^{1/25}}\right)\ge \left(1+\frac{1}{n}\right)^{j'+s},
\end{multline}
where we used the fact that $q\le \frac{n\log n}{100}$, while $M\lesssim n^{1/2}(\log n)^{3/2}$ and $s \asymp n^{1/5}$.

It follows from~\eqref{eq:j+s} that
\begin{equation}\label{eq:j+s2}
j''>j'+s.
\end{equation}
If $j' \le 0$, then we
have $B_j + B_{j'} = B_{j'}$, so $x+x' \in B_{j'} \subseteq
B_{j''-s}\subseteq B_{j''-1}$, as required. Thus we may assume that $j' \geq 1$. Then
$B_j \subseteq B_{j'} \subseteq X_q \times V_{\min\{j',q\}}$, and
hence $x+x' \in X_q \times V_{\min\{j',q\}}$. On the other hand, we
will have $X_q \times V_{\min\{j',q\}} \subseteq B_{j''-1}$ as soon
as $\min\{j',q\} < j''-s$.  Since $j'' \leq q+s$, it follows from~\eqref{eq:j+s2} that $j'<q$. Hence, using~\eqref{eq:j+s2} once more, we see that
$\min\{j',q\}=j'<j''-s$, as required.
\end{proof}

Now we verify the microdoubling condition \eqref{eq:def micro} with $K=100$.  In view of \eqref{bnest}, \eqref{ddef}, it suffices
to verify that
\begin{equation}\label{muj}
 \mu( B_{j+1} ) \leq 100 \mu( B_j ) \hbox{ for all } -M \leq j < q+s.
\end{equation}
This is clear for $j < 0$, so suppose $j \geq 0$.  From \eqref{mujq} we observe the bounds
\begin{multline}\label{eq:compute measure1}  \mu(B_j) \stackrel{\eqref{eq:def Bj}}{\leq} \mu(X_q) \cdot 3^{\max\{j-s,0\}} +
\sum_{\ell=1}^{\min\{j,q\}} \frac{2 \mu(X_q)}{q} \cdot 3^j\\\le 2\mu(X_q)\max\left\{3^{\max\{j-s,0\}},\frac{3}{q}\cdot 3^{\min\{j,q\}}\right\},\end{multline}
and
\begin{equation}\label{eq:compute measure2}
\mu(B_j) \stackrel{\eqref{eq:def Bj}}{\ge}\max\left\{\mu(X_q) \cdot 3^{\max\{j-s,0\}},\frac{1}{2q}\mu(X_q) \cdot 3^{\min\{j,q\}}\right\}.
\end{equation}
Now, \eqref{muj}  is an immediate consequence of~\eqref{eq:compute measure1} and~\eqref{eq:compute measure2}.

 It remains to prove the estimates \eqref{micro-1}, \eqref{micro-2}, \eqref{micro-3}.  Observe that the balls $B(x,r)$ in this metric space all take the form $x+B_j$ for some $j$.  Thus
$$ M f(x) = \max_{-M \leq j \leq q+s} \frac{1}{\mu(B_j)} \sum_{y \in B_j} |f(x+y)|.$$
>From \eqref{mee} we can find $f_q: X_q \to \R_+$ with $\|f_q\|_{L_1(X_q)} = 1$, and $\lambda > 0$, such that
\begin{equation}\label{eq:assumption lower fq}
\mu\left( M_q f_q > \lambda \right) > \frac{q}{2\lambda}.
\end{equation}
We extend this function $f_q$ to a function $f: X \to \R_+$ defined
by $f(x,y) \stackrel{\mathrm{def}}{=} f_q(x)$ for $x \in X_q$ and $y \in \FF_3^{q}$.
Thus,
\begin{equation}\label{eq:f norm} \|f\|_{L_1(X)} = 3^{q}.
\end{equation}
Observe  that provided $n$ is large enough, for every $j\ge \log q$ we have $3^{\max\{j-s,0\}}\le\frac{3^{j+1}}{q}$.  Thus, it follows from~\eqref{eq:compute measure1} that for $j\in [\log q,q]$ we have,
$$
\mu(B_j)\le 6\cdot\frac{3^j\mu(X_q)}{q}\stackrel{\eqref{mujq}}{\le} 12\mu(V_j)\mu(E_j).
$$
Hence, for all $(x,x')\in X$ we have,
$$ M f(x,x') \ge \max_{\log q \leq j \leq q} \frac{1}{12\mu(V_j)\mu(E_j)}
\sum_{(y,y') \in B_j} |f_q(x+y)|.$$
Since $B_j$ contains $E_j \times V_j$ for $j\le q$, we conclude that
\begin{multline}\label{substract the log} M f(x,x') \ge \max_{\log q \leq j \leq q} \frac{1}{12\mu(E_j)}
\sum_{y \in E_j} |f_q(x+y)|\\\ge \frac{1}{12}\left(\max_{1 \leq j \leq q} \frac{1}{\mu(E_j)}
\sum_{y \in E_j} |f_q(x+y)|-\sum_{j< \log q}\frac{1}{\mu(E_j)}
\sum_{y \in E_j} |f_q(x+y)|\right).
\end{multline}
Denote $g:X\to \R$ by $g(x,x')=\sum_{j< \log q}\frac{1}{\mu(E_j)}
\sum_{y \in E_j} |f_q(x+y)|$. Then
\begin{equation}\label{eq:g norm}
\|g\|_{L_1(X)}\le 3^q\log q\stackrel{\eqref{eq:f norm}}{\le} \|f\|_{L_1(X)}\log q.
\end{equation}
It follows from~\eqref{substract the log} that we have the pointwise bound $M_qf_q(x)\le 12 Mf(x,x')+g(x,x')$. Thus,
\begin{eqnarray*}
\frac{q\|f\|_{L_1(X)}}{2\lambda}&\stackrel{\eqref{eq:assumption lower fq}\wedge\eqref{eq:f norm}}{\le}&\mu\left((x,x')\in X:\ M_qf_q(x)>\lambda\right)\\&\le&\mu\left(12Mf+g>\lambda\right)\\&\le& \mu\left(Mf>\frac{\lambda}{24}\right)+\mu\left(g>\frac{\lambda}{2}\right)\\&\le& \mu\left(Mf>\frac{\lambda}{24}\right)+\frac{2\|g\|_{L_1(X)}}{\lambda}\\&\stackrel{\eqref{eq:g norm}}{\le}& \mu\left(Mf>\frac{\lambda}{24}\right)+\frac{2\log q\|f\|_{L_1(X)}}{\lambda}.
\end{eqnarray*}
Hence,
$$ \| M \|_{L_1(X) \to L_{1,\infty}(X)} \gtrsim q, $$
which gives \eqref{micro-1}.

A similar argument (requiring a closer inspection of the details of Proposition \ref{prelim}) can be used to give \eqref{micro-2}; alternatively, one can use \eqref{micro-1} and the pigeonhole principle to show that a dilated version $M_{r \cdot 2^\Z}$ of the lacunary maximal function obeys \eqref{micro-1}
for some $r > 0$, and then rescale the metric.  We omit the details.

It remains to verify the $L_p$ bound \eqref{micro-3}, i.e., to show for all $f \in L_p(X)$ we have
$$ \left\| \max_{-M \leq j \leq q+s} \frac{1}{\mu(B_j)} \sum_{y \in B_j} |f(x+y)| \right\|_{L_p(X)}
\lesssim_p \|f\|_{L_p(X)}.$$
The contribution of the case $-M \leq j \leq 0$ can be handled by Doob's maximal inequality as in Section~\ref{countersec}, so we need only consider the case $1 \leq j \leq q+s$.  Using~\eqref{eq:compute measure2} and the definition of $B_j$ we soon verify the pointwise estimate
\begin{multline}\label{eq:pointwise micro}
\max_{1 \leq j \leq q+s} \frac{1}{\mu(B_j)} \sum_{y \in B_j} |f(x+y)|
\lesssim \max_{1 \leq i \leq q} \frac{1}{\mu(X_q \times V_i)} \sum_{y \in X_q \times V_i} |f(x+y)| \\
 + \max_{1 \leq i \leq q} \frac{1}{\mu(E_i \times V_i)} \sum_{y \in E_i \times V_i} |f(x+y)|.
\end{multline}
Indeed, denote
$$
h(x)=\max_{1 \leq i \leq q} \frac{1}{\mu(X_q \times V_i)} \sum_{y \in X_q \times V_i} |f(x+y)|
 + \max_{1 \leq i \leq q} \frac{1}{\mu(E_i \times V_i)} \sum_{y \in E_i \times V_i} |f(x+y)|.
$$
Then for all $1\le j\le q+s$,
\begin{multline*}
\frac{1}{\mu(B_j)} \sum_{y \in B_j} |f(x+y)|\stackrel{\eqref{eq:def Bj}}{\lesssim} \frac{1}{\mu(B_j)}\left(\sum_{y\in X_q\times V_{\max\{j-s,0\}}}|f(x+y)|+\sum_{\ell=1}^{\min\{j,q\}}\sum_{y\in E_\ell\times V_\ell}|f(x+y)|\right)\\
\stackrel{\eqref{eq:compute measure2}}{\lesssim} h(x)\cdot \frac{\mu\left(X_q\times V_{\max\{j-s,0\}}\right)+\sum_{\ell=1}^{\min\{j,q\}}\mu\left(E_\ell\times V_\ell\right)}{\mu(X_q)\max\left\{3^{\max\{j-s,0\}},\frac{3^{\min\{j,q\}}}{q}\right\}}\stackrel{\eqref{eq:compute measure1}}{\lesssim}h(x),
\end{multline*}
proving~\eqref{eq:pointwise micro}.

The fact that the first term in the right-hand side of~\eqref{eq:pointwise micro} is bounded in $L_p(X)$ again follows from Doob's maximal
inequality, while the $L_p(X)$ boundedness of the second term in the right-hand side of~\eqref{eq:pointwise micro} follows from
\eqref{mefp}, Doob's maximal inequality and a Fubini
argument, as in Section~\ref{countersec}. The proof of Theorem
\ref{micro-ex} is now complete. \qed

Improved bounds on the dimension dependence of the operator norms of $M$ are of importance not only due to he intrinsic interest in the properties of the Hardy-Littlewood maximal function, which is a fundamental tool in analysis, geometric measure theory, probability theory, and Ergodic theory. For example, if it were the case that for $X=\ell_2^n$ the weak $(1,1)$ norm $\|M\|_{L_1(X) \to L_{1,\infty}(X)}$ is independent of $n$, then the weak $(1,1)$ inequality would become in essence an infinite dimensional phenomenon. This statement is not quite true, since there is no ``Lebesgue measure" on infinite dimensional Hilbert space, but nevertheless, even Stein's dimension independent bound on $\|M\|_{L_p(\ell_2^n) \to L_{p}(\ell_2^n)}$, $p>1$, has interesting infinite dimensional consequences---see for examples Ti\v{s}er's work~\cite{Tis88} on differentiation of integrals with respect to certain Gaussian measures on Hilbert space (provided that the integrand is in $L_p$ for some $p>1$). In addition, for various applications one needs to obtain good dimension dependence in a variety of theorems which were previously investigated without emphasis on quantitative bounds: for instance, the weak $(1,1)$ maximal inequality plays a key role in Rademacher's differentiation theorem for Lipschitz functions, which is useful to show that certain Banach spaces do not admit a bi-Lipschitz embedding into other Banach spaces. In order to obtain finitary versions of such statements, it is natural to ask for {\em quantitative differentiation theorems}, i.e., theorems which give lower bounds on the scale at which a Lipschitz function appears to be ``almost affine". An (appropriately formulated) analogous statement for Lipschitz functions on the $3$-dimensional Heisenberg group $\mathbb H^3$ is the central result of~\cite{CKN09}, whose quantitative bounds are necessary for an application to theoretical computer science; future progress might require studying similar questions on the $(2n+1)$-dimensional Heisenberg group for large $n$, in which case improved weak $(1,1)$ bounds for the associated maximal function would be useful (though, this is not the only issue that needs to be overcome in order to obtain useful dependence on $n$ in this case).

-------------------------------------------------------------------------------------------------------------------

\newpage
Now we present a different approach to proving Theorem
\ref{micro-thm}, based on Theorem \ref{doob2} and on approximating
the metric structure by an ultrametric model, following some ideas
from \cite{MN07}.

This approach requires a stronger condition than \eqref{micro},
namely the perfect Ahlfors-David regularity condition
\begin{equation}\label{ahlfors}
 \mu( B(x,r) ) = A r^n
\end{equation}
for all balls $B(x,r)$ and some constant $A > 0$.  This condition is
significantly more restrictive than \eqref{micro}, but still
contains the important special case of finite-dimensional normed
vector spaces, as well as  translation-invariant
dilation-homogeneous metrics on nilpotent groups such as the
Heisenberg group.

For any set $R \subseteq \R^+$ of radii, let $M_R$ denote the
restricted maximal operator
$$ M_R f(x) \stackrel{\mathrm{def}}{=} \sup_{r \in R} A_r |f|(x) = \sup_{r \in R} \frac{1}{\mu(B(x,r))} \int_{B(x,r)} |f|\ d\mu.$$
We consider, for any $1 \leq p < \infty$, the weak $(p,p)$ constant,
defined as the optimal number $\|M_R\|_{L_p(X)\to L_{p,\infty}(X)}$
for which one has the distributional inequality
$$ \mu( \{ x: M_R f > \lambda \} ) \leq \frac{\|M_R\|_{L_p(X)\to L_{p,\infty}(X)}^p}{\lambda^p} \| f\|_{L_p(X)}^p$$
holds for all $f \in L_p(X)$.

The main result here is

\begin{theorem}[Localisation]\label{local}  Let $(X,d,\mu)$ be a metric measure space obeying \eqref{ahlfors} for some $n \geq 2$ and $A > 0$, and let $R \subseteq \R^+$ and $p \geq 1$.  Then we have
$$ \| M_R \|_{L_p(X)\to L_{p,\infty}(X)} \lesssim 1 + \sup_{r > 0} \| M_{R \cap [r, n r]} \|_{L_p(X)\to L_{p,\infty}(X)}.$$
\end{theorem}

\begin{remark}  In the converse direction, one trivially has
$$\| M_R \|_{L_p(X)\to L_{p,\infty}(X)} \geq \sup_{r > 0} \| M_{R \cap [r, n r]} \|_{L_p(X)\to L_{p,\infty}(X)}.$$
Thus, up to constants, in order to establish a weak $(p,p)$ maximal
inequality for spaces obeying \eqref{ahlfors}, it suffices to do so
for scales localised to an interval $[r,nr]$.  In many cases (e.g.
finite-dimensional normed vector spaces) we can rescale $r = 1$.  By
an easy sparsification argument one can also replace the interval
$[r,nr]$ by $[r,n^c r]$ for any fixed $c > 0$ by accepting an
additional factor of $1/c$ in the implied constant.  This allows us
to recover the conclusions of Theorem \ref{micro-thm} in the case
when \eqref{ahlfors} holds (by setting $c$ equal to $1/n\log n$ or
$1/\log n$).
\end{remark}

\begin{proof}  We begin with some easy reductions.  We can assume $n$ is large, say $n \geq 10^{10}$, since for bounded $n$ the claim follows from the standard Hardy-Littlewood inequality\footnote{Note that this inequality automatically implies a weak $(p,p)$ analogue, also with constant $1$, by H\"older's inequality.} \eqref{mok}.
By a monotone convergence argument we may assume that $R$ is
bounded.  By sparisifying $R$ into $10$ disjoint sets we may assume
that $R$ is contained in a set of the form $\bigcup_{-K \leq k \leq
K} [n^{10k+i}, n^{10k+i+1}]$ for some $1 \leq i \leq 10$ and some
$K$.  By rescaling the metric $d$ (and adjusting $A$ appropriately)
we may take $i=5$, thus we have
$$ R = \bigcup_{-K \leq k \leq K} R_k$$
for some sets $R_k \subseteq [n^{10k+5}, n^{10k+6}]$.  In particular
$M_R f = \sup_{-K \leq k \leq K} M_{R_k} f$.

Let
\begin{equation}\label{qdef}
Q \stackrel{\mathrm{def}}{=} 1+\sup_{-K \leq k \leq K} \|M_{R_k} \|_{L_p(X) \to
L_{p,\infty}(X)}.
\end{equation}
It will suffice to show that
$$ \| M_R f \|_{L_{p,\infty}(X)} \lesssim Q \|f\|_{L_p(X)}$$
for all $f \in L_p(X)$.  By monotone convergence we may assume that
$f$ (and hence $M_R f$) has bounded support.  By homogeneity it
suffices to show that
$$ \mu( \{ M_R f > 1 \} ) \leq C^p Q^p \int_X |f|^p\ d\mu$$
for some absolute constant $C$.

We would like to apply Theorem \ref{doob2}, but unfortunately there
are no obvious candidates for $\B_k$ with which we have either
\eqref{mnf} or \eqref{emf}.  Nevertheless we shall be able to
proceed by replacing $M_R$ with a slightly modified variant.

It is technically inconvenient to work with the full space $X$, as
it may have infinite measure, and we shall restrict matters to a
finite measure set $E'$ as follows. Let $E$ denote the support of
$f$, we may assume that $E$ has non-zero measure.  Let $E'$ denote
the $n^{10K+10}$ neighbourhood of $E$, and let $E''$ denote the
$n^{10K+20}$-neighbourhood of $E$, thus $E \subseteq E' \subseteq
E''$ have positive finite measure and $M_R f$ is supported in $E'$.
We now introduce a random sequence
$$ x_1, x_2, x_3,\ldots \in E''$$
of points chosen uniformly and independently at random from $E''$,
using the normalised probability measure $1_{E''} d\mu / \mu(E'')$.
For every $x \in E'$ and $-K \leq k \leq K$, we define the quantity
$j_k(x) \in \Z_+ \cup \{+\infty\}$ by
$$ j_k(x) \stackrel{\mathrm{def}}{=} \inf \{ j \in \Z_+: d( x, x_j ) < n^{10k} \},$$
thus $j_k(x)$ is the smallest index $j$ for which $x$ lies within
$n^{10k}$ of $x_j$, or $+\infty$ if no such $x_j$ exists.  It is not
hard to see that $j_k(x)$ is almost surely finite for any $x \in
E'$, since each $x_j$ has a non-zero probability of falling into
$B(x,n^{10k})$.  Also, $x \mapsto j_k(x)$ is a measurable function
jointly in $E'$ and in the underlying probabilistic variable
generating the sequence $x_1,x_2,\ldots$.  By the Fubini-Tonelli
theorem, we conclude almost surely that $j_k$ is almost everywhere
finite on $E'$, and we shall henceforth condition on this
(probability $1$) event. We let $\B_k$ be the $\sigma$-algebra on
$E'$ generated by the functions $j_{k'}(x)$ for $k \leq k' \leq K$,
thus we have the nesting
$$ \B_{-K} \supset \B_{-K+1} \supset \ldots \supset \B_{K-1} \supset \B_K.$$
We adopt the conventions that $\B_k = \B_{-K}$ for $k<-K$, and that
$\B_k = \{\emptyset, E'\}$ for $k > K$.

Observe that $\B_k$ is an atomic $\sigma$-algebra with at most
countably many atoms of positive finite measure, together with some
atoms of zero measure.  For every $x$, we let $\B_k(x)$ denote the
atom of $\B_k$ which contains $x$; this has positive measure for
almost every $x$.  Since the set $\{ x \in E': j_k(x) = j \}$ is
contained in $B(x_k, n^{10k})$, and $j_k$ is almost surely finite,
we thus see that $\B_k(x)$ has diameter at most $2n^{10k}$ for
almost every $x$.

For each $-K \leq k \leq K$, let
$$ E_k \stackrel{\mathrm{def}}{=} \{ x \in E': M_{R_k} f > 1 \} \backslash \bigcup_{k < k' \leq K}
\{ x \in E': M_{R_k'} f > 1 \}.$$ Then the $E_k$ are disjoint, and
$$ \mu( \{ M_R f > 1 \} ) = \sum_{-K \leq k \leq K} \mu( E_k ).$$
We now define the random subset $\tilde E_k$ of $E_k$ by
$$ \tilde E_k \stackrel{\mathrm{def}}{=} \{ x \in E_k: B(x, n^{10k+6}) \subseteq \B_{k+1}(x) \}.$$
Note that if $x \in E_k$, then $M_R f(x) > 0$, and so $x$ lies
within $n^{10K+6}$ of $E$. Thus $B(x, n^{10k + 6})$ is certainly
cotnained inside $E'$.

The key point is that $\tilde E_k$ is typically a large subset of
$E_k$:

\begin{lemma}\label{tek} For every $-K \leq k \leq K$ we have
$$ \E( \mu(\tilde E_k) ) \geq \frac{1}{2} \mu(E_k).$$
\end{lemma}

\begin{proof} The claim is trivial for $k=K$, so we assume $k < K$.
By the Fubini-Tonelli theorem it suffices to show that
$$ \Pr( x \in \tilde E_k ) \geq \frac{1}{2}$$
for all $x \in E_k$.

Fix $x$.  By definition of $\tilde E_k$ and $\B_{k+1}$, we have
$$ \Pr( x \in \tilde E_k ) = \Pr( \bigwedge_{k < k' \leq K}
( j_{k'}(y) = j_{k'}(x) \hbox{ for all } y \in B(x,n^{10k+6}) ) ).$$
By the union bound it thus suffices to show that
$$ \Pr( j_{k'}(y) = j_{k'}(x) \hbox{ for all } y \in B(x,n^{10k+6}) ) \leq 2^{k-k'-1}$$
for all $k < k' \leq K$.

Fix $k'$, and consider the random variable
$$ \tilde j_{k'}(x) \stackrel{\mathrm{def}}{=} \inf \{ j \in \Z_+: d( x, x_j ) < n^{10k'} + n^{10k+6}\}.$$
Then $\tilde j_{k'}(x)$ is almost surely finite, and it suffices to
show the conditional expectation estimate
$$\Pr( j_{k'}(y) = j_{k'}(x) \hbox{ for all } y \in B(x,n^{10k+6}) | \tilde j_{k'}(x) = j )
\leq 2^{k-k'-1}$$ for each positive integer $j$.

Fix $j$.  Observe that if $\tilde j_{k'}(x) = j$ and $x_j \in
B(x,n^{10k'} - n^{10k+6})$ then $j_{k'}(y) = j_{k'}(x) = j$ for all
$y \in B(x,n^{10k+6})$.  Thus it suffices to show that
$$\Pr( x_j \in B(x,n^{10k'} - n^{10k+6}) | \tilde j_{k'}(x) = j )
\leq 2^{k-k'-1}.$$ Observe that $\tilde j_{k'}(x)$ is equal to $j$
if and only if $x_1,\ldots,x_{j-1} \not \in B(x,n^{10k'} +
n^{10k+6})$ and $x_j \in B(x,n^{10k'}+n^{10k+6})$.  Since
$B(x,n^{10k'}+n^{10k+6}) \subseteq E''$ and $x_1,\ldots,x_j$ are
independent, we thus see that conditioning on this event, $x_j$ is
uniformly distributed (using $\mu$) in the ball
$B(x,n^{10k'}+n^{10k+6})$.  In particular,
$$ \Pr( x_k \in B(x,n^{10k'}-n^{10k+6}) | \tilde j_{k'}(x) = j ) = \frac{\mu(x,n^{10k'}-n^{10k+6})}{\mu(x,n^{10k'}+n^{10k+6})}.$$
But by \eqref{ahlfors} the right-hand side simplifies to
$$ \left( \frac{1 - n^{-10(k'-k)+6} }{ 1 - n^{-10(k'-k)+6} } \right)^n$$
and the claim follows from taking logarithms and using the
mean-value theorem (recall that $k' \geq k+1$ and that $n$ is
large).
\end{proof}

Our task is to show that
$$ \sum_{-K \leq k \leq K} \mu(E_k) \leq C^p Q^p \int_X |f|^p\ d\mu.$$
>From Lemma \ref{tek}, it suffices to show that
$$ \sum_{-K \leq k \leq K} \mu(\tilde E_k) \leq \frac{1}{2} C^p Q^p \int_X |f|^p\ d\mu$$
almost surely.

Let $\tilde M_{R_k}$ denote the sublinear operator
$$ \tilde M_{R_k} g \stackrel{\mathrm{def}}{=} 1_{\tilde E_k} M_{R_k} g$$
defined on $E'$, then clearly
$$ \sum_{-K \leq k \leq K} \mu(\tilde E_k) = \{ x \in E': \sup_{-K \leq k \leq K}
\tilde M_{R_k} f > 1 \}.$$ Also, from \eqref{qdef} we have
$$
 \| \tilde M_{R_k} g \|_{L_{p,\infty}(E')} \leq Q \| g\|_{L_p(E')}
$$
for all $k$ and all $g \in L_p(E')$. In view of Lemma \ref{doob2}
(and the fact that $Q \geq 1$), it will thus suffice to verify the
bounds
\begin{equation}\label{mak2}
 \| \tilde M_{R_k} g \|_{L_\infty(E')} \leq 2 \| \E(|g||\B_k)\|_{L_\infty(E')}
\end{equation}
for all $g \in L_\infty(E')$, as well as the localisation property
\begin{equation}\label{emf2}
 1_{F_{k+1}} \tilde M_{R_k} g(x) = \tilde M_{R_k} (1_{F_{k+1}} g)(x)
\end{equation}
for all $g \in L_p(E')$, $F_{k+1} \in \B_{k+1}$, and almost every
$x$.

Let us first show \eqref{emf2}.  By definition of $\tilde M_{R_k}$
it suffices to verify this for $x \in \tilde E_k$.  For almost every
such $x$, we either have $\B_{k+1}(x) \subseteq F_{k+1}$ or
$\B_{k+1}(x)$ is disjoint from $F_{k+1}$ (since $F_{k+1} \in
\B_{k+1}$).  In the former case we conclude from the definition of
$\tilde E_k$ and $R_k$ that $B(x,r) \subseteq F_{k+1}$ for all $r\in
R_k$, and similarly in the latter case we have $B(x,r)$ disjoint
from $F_{k+1}$ for all $r \in R_k$.  The claim follows.

Now we show \eqref{mak2}.  We normalise $\|
\E(|g||\B_k)\|_{L_\infty(E')} = 1$, thus
\begin{equation}\label{gumf}
 \int_F |g|\ d\mu \leq \mu(F) \hbox{ for all } F \in \B_k.
 \end{equation}
It suffices to show that for almost every $x \in E'$ and $r \in R_k$
that
$$ \frac{1}{\mu(B(x,r)} \int_{B(x,r)} |g|\ d\mu \leq 2.$$
Fix $x,r$.  Let $F$ be the union of all the atoms in $\B_k$ which
intersect $B(x,r)$, then by \eqref{gumf} we have
$$\frac{1}{\mu(B(x,r)} \int_{B(x,r)} |g|\ d\mu \leq \frac{\mu(F)}{\mu(B(x,r))}.$$
But since all the atoms in $\B_k$ have diameter at most $2 n^{10k}$,
we see that $F \subseteq B(x,r+2n^{10k})$.  Applying \eqref{ahlfors}
and the fact that $r \geq n^{10k+5}$ for all $r \in R_k$, we obtain
the claim (recall that $n$ is large).  This concludes the proof of
Theorem \ref{local}.
\end{proof}

\newpage